\newcommand*{\strongconvex}{2}
\newcommand*{\feedforward}{0.9}
\pgfplotsset{width=8cm,compat=newest}
\newcommand*{\damping}{0.006}%
\newcommand*{\freq}{25}%
\pgfmathsetmacro{\freqd}{sqrt(1-(\damping)^2)*\freq}%
\pgfplotsset{
    standard/.style={
    axis x line=middle,
    axis y line=middle,
    enlarge x limits=0.15,
	enlarge y limits=0.15,
	every axis plot post/.style={mark options={fill=black}},
	}
}
\pgfplotsset{%
    ,compat=1.12
    ,every axis x label/.style={at={(current axis.right of origin)},anchor=north west}
    ,every axis y label/.style={at={(current axis.above origin)},anchor=north east}
    }
\tikzstyle{every node}=[font=\small]
\tikzstyle{every path}=[line width=0.8pt,line cap=round,line join=round]
\newcommand{\real}{\mathbb{R}}
\newcommand{\integer}{\mathbb{Z}}
\newcommand{\setdef}[2]{\{#1 \;|\; #2\}}
\newcommand{\vect}[1]{\mathbbold{#1}}
\newcommand{\vzeros}[1][]{\vect{0}_{#1}}
\DeclareSymbolFont{bbold}{U}{bbold}{m}{n}
\DeclareSymbolFontAlphabet{\mathbbold}{bbold}
\newcommand{\map}[3]{#1: #2 \rightarrow #3}
\newcommand{\tb}{\color{blue}}
\newcommand\oprocendsymbol{\hbox{$\square$}}
\newcommand\oprocend{\relax\ifmmode\else\unskip\hfill\fi\oprocendsymbol}
\newtheorem{theorem}{Theorem}[section]
\newtheorem{lemma}[theorem]{Lemma}
\newtheorem{definition}[theorem]{Definition}
\newtheorem{corollary}[theorem]{Corollary}
\newtheorem{proposition}[theorem]{Proposition}
\newtheorem{remark}[theorem]{Remark}
\newtheorem{example}[theorem]{Example}
\newenvironment{pfof}[1]{\vspace{1ex}\noindent{\itshape Proof of
    #1:}\hspace{0.5em}} {\hfill\oprocend\vspace{1ex}}
\newenvironment{proof}[1]{\vspace{1ex}\noindent{\itshape Proof:}\hspace{0.5em}} {\hfill\oprocend\vspace{1ex}}
\renewcommand{\tb}{\color{black}}
\begin{document}
\title{Equilibrium-Independent Dissipativity with Quadratic Supply Rates}

\author{John~W.~Simpson-Porco,~\IEEEmembership{Member,~IEEE}\\
\thanks{J.~W.~Simpson-Porco is with the Department of Electrical and Computer Engineering, University of Waterloo, Waterloo ON, N2L 3G1 Canada. Correspondence: {\texttt{jwsimpson@uwaterloo.ca}}.}
\thanks{This work was supported in part by the NSERC Discovery Grant  RGPIN-2017-04008 and by University of Waterloo start-up funding.}
}

\markboth{Submitted to IEEE Transactions on Automatic Control. This version: \today}%
{Submitted to IEEE Transactions on Automatic Control. This version: \today}

\maketitle

\begin{abstract}
Equilibrium-independent dissipativity (EID) is a recently introduced system property which requires a system to be dissipative with respect to any forced equilibrium configuration. This paper is a detailed examination of EID with quadratic supply rates for a common class of nonlinear control-affine systems. {\tb We provide an algebraic characterization of EID for such systems} in the spirit of the Hill-Moylan lemma, where the usual stability condition is replaced by an incremental stability condition. Based on this characterization, we state results concerning internal stability, feedback stability, and absolute stability of EID systems. Finally, {\tb we study EID for discrete-time systems}, providing the relevant definitions and {\tb an} analogous Hill-Moylan-type {\tb characterization}. Results for both continuous-time and discrete-time systems are illustrated through examples on physical systems and convex optimization algorithms.
\end{abstract}


\begin{IEEEkeywords}
Nonlinear systems, dissipative systems, passivity, stability analysis, absolute stability, Lyapunov methods
\end{IEEEkeywords}

\section{Introduction}
\label{Section: Introduction}



Dissipation inequalities provide a general framework for the analysis and design of interconnected nonlinear dynamical systems. Introduced by Williems in \cite{JCW:72a}, dissipativity is an input-output system property which unifies classical properties such as finite-gain, passivity, and conicity \cite{GZ:66}. 
Further advances in \cite{PM:74,HM:76} by Hill and Moylan characterized dissipativeness for control-affine systems in terms of a system of nonlinear equations. 
Dissipative systems theory and associated control design techniques are now fairly mature, with several  reference books available \cite{RS-MJ-PK:97,BB-RL-BM-OE:07,AJvdS:16}.

When applied to state-space systems for the purposes of stability analysis, dissipation inequalities are referenced to a chosen equilibrium input-state-output configuration $(\bar{u},\bar{x},\bar{y})$, {\tb which is typically} taken to be the origin. If several such dissipative systems are interconnected with one another, the origin is an equilibrium point for the closed-loop system, and dissipativity theory provides tools for assessing its stability \cite{AJvdS:16}.
This framework however assumes considerable knowledge of the equilibrium sets of the individual subsystems, and this may not be justified in applications. When considering uncertain, large-scale, nonlinear systems, equilibrium sets of subsystems may be uncertain or otherwise difficult to characterize. Further complicating the situation, the very act of interconnection between subsystems will induce a closed-loop equilibrium set, determined by the simultaneous solution of all subsystem equilibrium equations and all interconnection constraints. When many uncertain systems are interconnected, explicitly calculating this equilibrium set may prove infeasible. It then becomes challenging to construct classical storage functions for the subsystems in order to verify internal stability and/or I/O properties of the interconnection; classical dissipativity falls short as an effective tool.

One remedy to these issues is termed \emph{incremental dissipativity}, which requires that a dissipation inequality hold along any two arbitrary trajectories of a forced system \cite{AP-LM:08}. A closely related property termed differential dissipativity is discussed in \cite{FF-RS:13,AJvdS:13}. Under appropriate technical assumptions, {\tb incremental dissipativity} implies the existence of a {\tb unique equilibrium trajectory towards which all other trajectories converge}. As such, {\tb incremental dissipativity} has proven useful for studying output regulation \cite{AP-LM:08,MB-CDP:15} and synchronization of interconnected systems \cite{GBS-RS:07,TL-DJH-JZ:14,HP-JZ:16}, where all subsystem trajectories converge to a common global steady-state trajectory. Incremental dissipativity however is quite demanding as a system property, since often we wish only to establish stability/dissipativity of trajectories with respect to the set of equilibrium configurations, and not with respect to all other possible trajectories.


As an intermediate property between classical and incremental dissipativity, \emph{equilibrium-independent dissipativity} (EID) has recently been introduced \cite{GHH-MA-AKP:11,MB-DZ-FA:13,MB-DZ-FA:14}, requiring a dissipation inequality to hold between any system trajectory and any forced equilibrium point. 
{\tb The utility of this property is that as the operating point of the system moves --- either intentionally due to set-point changes, or unintentionally due to disturbances --- one is guaranteed that the dissipation inequality under consideration will continue to hold with respect to the \emph{new} operating point.}
This property has been used for the control of port-Hamiltonian systems \cite{BJ-RO-EGC-FC:07,SA-RO-RC:15}, for performance certification of interconnected systems \cite{CM-LL-MA-AKP:15,MA-CM-AP:16}, for network congestion control \cite{JTW-MA:04}, for stability analysis of various power system models \cite{ST-MB-CDP:16,ST-CDP:17,CDP-NM:18}, and for analysis of optimization algorithms \cite{JWSP:16f}. Particularly relevant to this paper is \cite{BJ-RO-EGC-FC:07}, where a Lyapunov construction based on the \emph{Bregman divergence} was used to establish equilibrium-independent passivity.\footnote{\tb The use of the Bregman divergence in the control literature apparently traces to \cite{AAA-BEY:01}; we thank N. Monshizadeh for this observation.}

The theory of EID systems presented in \cite{GHH-MA-AKP:11,MB-DZ-FA:13,MB-DZ-FA:14} has not however been developed to the level of the classical dissipativity literature \cite{RS-MJ-PK:97,BB-RL-BM-OE:07,AJvdS:16}, and no consistent, comprehensive reference is available. In addition, two particularly important items absent from the literature are an {\tb algebraic Hill-Moylan-type characterization} of EID, and analogous definitions and results for the discrete-time case. The former is theoretical bedrock and a key step towards EID control design \cite{AJvdS:16}, while the author sees the latter as important for analyzing and designing interconnections of physical systems with optimization algorithms. Putting these future directions/applications to the side, here we focus instead on developing and illustrating the basic theory of equilibrium-independent dissipativity.


\smallskip

\subsection{Contributions}

The overarching goal of this paper is to provide a detailed treatment of EID systems with quadratic supply rates, developing basic characterizations and stability results, and illustrating the results with examples. {\tb We restrict our discussion to} nonlinear control-affine systems with constant input and throughput matrices, in both continuous and discrete-time.\footnote{We consider this particular subclass of control-affine systems because (i) it is sufficient for the applications we have considered, and (ii) it permits a relatively intuitive extension of Hill-Moylan conditions for classical dissipativity to EID.}

There are three main contributions.\footnote{A short version of this paper has been submitted to ACC 2018. The ACC version contains Lemma \ref{Lem:HillMoylanIncremental}, its proof, and the statement of Theorem \ref{Thm:Absolute}. The ACC version does not contain Example \ref{Example:Port}, Example \ref{Example:Grad}, Example \ref{Example:AHU}, Lemma \ref{Lem:InternalStability}, Theorem \ref{Thm:EIDFeedback}, the proof of Theorem \ref{Thm:Absolute}, Example \ref{Ex:SMIB}, Lemma \ref{Lem:MaxMonotone}, and Lemma \ref{Lem:Intersection}, or any of the material from Section \ref{Sec:DTEID}. This is noted to emphasize that the contributions of this paper differ substantially from the conference version.} First, we show in Section \ref{Sec:EID} that EID can be characterized in terms of an appropriately modified Hill-Moylan lemma \cite{HM:76}. The key modification is that the usual stability-like condition is replaced by an incremental stability-like condition. Roughly speaking, the results can be interpreted as saying that dissipativity plus an appropriate incremental stability-like condition yields EID; we present various examples illustrating the results. Second, in Section \ref{Sec:Stability}, we study stability of EID systems, stating results for internal and feedback stability, and study an equilibrium-independent variant of the absolute stability problem. Third and finally, in Section \ref{Sec:DTEID} we consider the discrete-time case, providing the relevant definitions, corresponding Hill-Moylan-type conditions, and illustrating how the results can be applied to analyze the gradient method for convex optimization. 

Two major {\tb implications} of our results are that (i) EID can be {\tb established and applied to problems} in much the same way as standard dissipativity, and (ii) for square EID systems in feedback, the existence/uniqueness of closed-loop equilibria can be inferred by studying the monotonicity of the subsystem I/O relations.

\subsection{Notation}

\smallskip

The set $\real$ (resp. $\real_{\geq 0}$) is the set of real (resp. nonnegative) numbers. The $n \times n$ identity matrix is $I_n$, $\vzeros[]$ is a matrix of zeros of appropriate dimension, while $\vzeros[n]$ is the $n$-vector of all zeros. Throughout, $\|x\|_2 = (x^{\sf T}x)^{1/2}$ denotes the 2-norm of $x$, while for $P = P^{\sf T} \succ \vzeros[]$, $\|x\|_P = (x^{\sf T}Px)^{1/2}$; when convenient, we will (ab)use this notation even if $P \succeq \vzeros[]$. {\tb The set of real-valued square-integrable signals $\map{v}{[0,\infty)}{\real^m}$ is denoted by $\mathscr{L}_{2}^m[0,\infty)$, with $\mathscr{L}_{2e}^m[0,\infty)$ denoting the associated extended signal space \cite[Chapter 1]{AJvdS:16}; the corresponding discrete-time spaces are denoted by $\ell_{2}^{m}[0,\infty)$ and $\ell_{2e}^m[0,\infty)$.} For a twice-differentiable function $\map{V}{\real^n}{\real}$, $\map{\nabla V}{\real^n}{\real^n}$ is its gradient while $\map{\nabla^2 V}{\real^n}{\real^{n \times n}}$ is its Hessian. {\tb A differentiable function $\map{V}{\real^n}{\real}$ is \emph{convex} if
$$
[\nabla V(x)-\nabla V(z)]^{\sf T}(x-z) \geq k(x,z)\|x-z\|_2^2
$$
for all $x,z \in \real^n$ and some function $\map{k}{\real^n \times \real^n}{\real_{\geq 0}}$. If $k(x,z) > 0$ for all $x \neq z$, then $V$ is \emph{strictly} convex, and if $k(x,z) \geq \mu > 0$ for all $(x,z)$, then $V$ is $\mu$-\emph{strongly} convex; in the twice differentiable case, these statements are equivalent to $\nabla^2V(x) \succ \vzeros[]$ and $\nabla^2 V(x) \succeq \mu I_n$ for all $x \in \real^n$, respectively.
}

\section{Nonlinear Dissipative Systems}
\label{Sec:ReviewDissipative}

\subsection{Control-Affine Systems and Forced Equilibria}
\label{Sec:ControlAffine}

Consider the continuous-time nonlinear control-affine system {\tb with constant input and throughput matrices}
\begin{equation}\label{Eq:NonlinearSystem}
\Sigma:\,\begin{cases}
\begin{aligned}
\dot{x}(t) &= f(x(t)) + Gu(t)\\
y(t) &= h(x(t)) + Ju(t)
\end{aligned}
\end{cases}
\end{equation}
with state {\tb $x(t) \in \mathcal{X} := \real^n$, input $u(t) \in \mathcal{U} := \real^m$ and output $y(t) \in \mathcal{Y} := \real^p$ where $m, p \leq n$.  The maps $\map{f}{\mathcal{X}}{\real^n}$ and $\map{h}{\mathcal{X}}{\mathcal{Y}}$ are assumed to be sufficiently smooth such that trajectories are forward complete  for all initial conditions $x(0) \in \mathcal{X}$ and all input functions $u(\cdot) \in \mathscr{L}_{2e}^m[0,\infty)$, with corresponding output trajectories $y(\cdot) \in \mathscr{L}_{2e}^{p}[0,\infty)$.} The input matrix $G \in \real^{n \times m}$ is constant and has rank $m$ (full column rank). The throughput matrix $J \in \real^{p \times m}$ is constant. {\tb An \emph{equilibrium configuration} of \eqref{Eq:NonlinearSystem} is a triple $(\bar{u},\bar{x},\bar{y}) \in \mathcal{U} \times \mathcal{X} \times \mathcal{Y}$ satisfying }
\begin{equation}\label{Eq:NonlinearEquilibrium}
\begin{aligned}
\vzeros[n] &= f(\bar{x}) + G\bar{u}\\
\bar{y} &= h(\bar{x}) + J\bar{u}\,.
\end{aligned}
\end{equation}
When $m = n$, the system is fully actuated and for any desired equilibrium point $\bar{x} \in \mathcal{X}$, $\bar{u} = -G^{-1}f(\bar{x})$ is the associated {\tb equilibrium} input. When $m < n$, let $G^{\perp} \in \real^{(n-m)\times n}$ be a full-rank left annihilator of $G$; {\tb that is, $G^{\perp}$ satisfies} $G^{\perp}G = \vzeros[]$ and $\mathrm{rank}({G^{\perp}}) = n-m$ \cite[Lemma 2]{RO-AVDS-FC-AA:08}. It follows that
$$
\mathcal{E}_{\Sigma} :=
\begin{cases}
\mathcal{X} & \text{if} \quad m = n\\
\setdef{\bar{x} \in \mathcal{X}}{G^{\perp}f(\bar{x}) = \vzeros[n-m]} & \text{if} \quad m < n
\end{cases}
$$ 
is the set of assignable equilibrium points. For every $\bar{x} \in \mathcal{E}_{\Sigma}$, we have the associated unique {\tb equilibrium} input and output
\begin{equation}\label{Eq:InputOutputMappings}
\begin{aligned}
\bar{u} &= k_{u}(\bar{x}) := -(G^{\sf T}G)^{-1}G^{\sf T}f(\bar{x})\\
\bar{y} &= k_{y}(\bar{x}) := h(\bar{x}) - J(G^{\sf T}G)^{-1}G^{\sf T}f(\bar{x})\,.
\end{aligned}
\end{equation}
{\tb While the input-to-state map $\map{k_u}{\mathcal{X}}{\mathcal{U}}$ defined above} is a function, it is useful to {\tb reinterpret} it as a {\tb \emph{relation}
$$
\mathcal{K}_u := \setdef{(x,u)}{u+(G^{\sf T}G)^{-1}G^{\sf T}f(x) = \vzeros[m]} \subset \mathcal{X} \times \mathcal{U}\,,
$$
and consider the inverse relation $\mathcal{K}_u^{-1} \subset \mathcal{U} \times \mathcal{X}$, which relates the domain of equilibrium inputs to the codomain of forced equilibria. As is standard, we overload the notation and interpret the relation $\mathcal{K}_{u}^{-1}(\cdot)$ as a set-valued mapping when convenient. From \eqref{Eq:InputOutputMappings} then, we may define an equilibrium input/output (I/O) {relation} $\mathcal{K}_{\Sigma} := k_y \circ \mathcal{K}_u^{-1} \subseteq \mathcal{U} \times \mathcal{Y}$, or equivalently
$$
\mathcal{K}_{\Sigma} := \setdef{(\bar{u},\bar{y}) \in \mathcal{U}\times\mathcal{Y}}{\text{there exists}\,\,\bar{x}\in\mathcal{X}\,\,\text{solving}\,\,\eqref{Eq:InputOutputMappings}}\,,
$$
which relates compatible steady-state inputs and outputs.}

\smallskip

\begin{remark}{\bf (Assignable Equilibria):}\label{Rem:LTI}
If $\vzeros[n-m]$ is a regular value for $E(x) := G^{\perp}f(x)$, then the associated fiber $\mathcal{E}_{\Sigma} = E^{-1}(\vzeros[n-m])$ is a $m$-dimension embedded submanifold of $\real^n$ \cite[
Corollary 5.24]{JML:03}. In the case of LTI systems where $f(x) = Fx$ with $F \in \real^{n \times n}$, the set of assignable equilibria becomes $\mathcal{E}_{\Sigma} = \ker(G^{\perp}F)$. If in addition $F$ is invertible, this simplifies further to $\mathcal{E}_{\Sigma} = F^{-1}\mathrm{Im}(G)$. \hfill \oprocend
\end{remark}

\smallskip


\subsection{Classical Dissipativity of Control-Affine Systems}
\label{Sec:ClassicDissipativityCT}

We provide a brief review of dissipativity theory for control-affine nonlinear systems; see \cite{RS-MJ-PK:97,BB-RL-BM-OE:07,AJvdS:16} for various overviews of dissipativity and related concepts. {\tb In this subsection, we make the additional} assumptions for \eqref{Eq:NonlinearSystem} that $f(\vzeros[n]) = \vzeros[n]$ and $h(\vzeros[n]) = \vzeros[p]$, so that $(\bar{u},\bar{x},\bar{y}) = (\vzeros[m],\vzeros[n],\vzeros[p])$ is an equilibrium configuration. Let $\map{{\sf w}}{\mathcal{U} \times \mathcal{Y}}{\real}$ be a continuous function called the \emph{supply rate}. The system $\Sigma$ in \eqref{Eq:NonlinearSystem} is \emph{dissipative} with respect to the supply rate ${\sf w}(u,y)$ if there exists a continuously differentiable \emph{storage function} $\map{V}{\mathcal{X}}{\real_{\geq 0}}$ with $V(\vzeros[n]) = 0$ such that
\begin{equation}\label{Eq:Dissipation}
\frac{\mathrm{d}}{\mathrm{d}t} V(x(t)) := \nabla V(x)^{\sf T}(f(x)+Gu) \leq {\sf w}(u(t),y(t))
\end{equation}
{\tb for all $t \geq 0$ and all measurable inputs $u(\cdot) \in \mathscr{L}_{2e}^{m}[0,\infty)$.} The inequality \eqref{Eq:Dissipation} is called a \emph{dissipation inequality}; the interpretation is that the rate of change of energy $V(x)$ {\tb stored} by the system is less than the supplied power ${\sf w}(u,y)$. {\tb In this paper we} focus exclusively on quadratic supply rates
\begin{equation}\label{Eq:SupplyNormal}
{\sf w}(u,y) = \begin{bmatrix}
y \\ u
\end{bmatrix}^{\sf T}\begin{bmatrix}
Q & S\\
S^{\sf T} & R
\end{bmatrix}\begin{bmatrix}
y \\ u
\end{bmatrix}\,,
\end{equation}
where $Q = Q^{\sf T}, S$, and $R = R^{\sf T}$ are matrices of appropriate dimensions. {\tb To ensure that the inequality \eqref{Eq:Dissipation} is not trivially satisfied, we make the standard assumption that the block matrix in \eqref{Eq:SupplyNormal} is sign-indefinite \cite{HM:76}.}
The supply rate \eqref{Eq:SupplyNormal} contains some common I/O system properties as special cases, including passivity $(Q,S,R) = (\vzeros[],\frac{1}{2}I_m,\vzeros[])$ and finite $\mathscr{L}_2$-gain $(Q,S,R) = (-I_p, \vzeros[], \gamma^2 I_m)$ for $\gamma \geq 0$.
%
%
%
%
%
The key characterization of quadratically dissipative continuous-time control-affine systems is due to Hill and Moylan.

\smallskip
\begin{lemma}{\bf (Hill-Moylan Conditions \cite{HM:76}):}\label{Lem:HillMoylan}
The control-affine system $\Sigma$ in \eqref{Eq:NonlinearSystem} is dissipative with respect to the supply rate \eqref{Eq:SupplyNormal} with continuously-differentiable storage function $\map{V}{\mathcal{X}}{\real_{\geq 0}}$ if and only if there exists an integer $k > 0$, a matrix $W \in \real^{k \times m}$ and a function $\map{l}{\mathcal{X}}{\real^k}$ such that
\begin{subequations}\label{Eq:HillMoylan}
\begin{align}
\label{Eq:HillMoylan1}
\nabla V(x)^{\sf T}f(x) &= h(x)^{\sf T}Qh(x) - l(x)^{\sf T}l(x)\\
\label{Eq:HillMoylan2}
\frac{1}{2}\nabla V(x)^{\sf T}G &= h(x)^{\sf T}(QJ+S) - l(x)^{\sf T}W\\
\label{Eq:HillMoylan3}
W^{\sf T}W &= R + J^{\sf T}S + S^{\sf T}J + J^{\sf T}QJ\,.
\end{align}
\end{subequations}
\end{lemma}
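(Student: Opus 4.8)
The plan is to reduce the dissipation inequality to a pointwise algebraic condition and then recognize the Hill-Moylan equations as a sum-of-squares factorization of the resulting quadratic form. First I would observe that \eqref{Eq:Dissipation} is purely algebraic in the instantaneous values $(x(t),u(t))$: it contains no integrals or time-derivatives of the input. Since $\Sigma$ is forward complete, for any $(x_0,u_0) \in \mathcal{X}\times\mathcal{U}$ the choice $x(0)=x_0$ with constant input $u(\cdot)\equiv u_0$ realizes $(x(0),u(0))=(x_0,u_0)$. Hence $\Sigma$ is dissipative with storage function $V$ if and only if
$$
d(x,u) := {\sf w}(u,h(x)+Ju) - \nabla V(x)^{\sf T}(f(x)+Gu) \geq 0
$$
for all $(x,u)\in\mathcal{X}\times\mathcal{U}$.

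Next I would substitute $y = h(x)+Ju$ into \eqref{Eq:SupplyNormal} and collect terms by powers of $u$. Because the supply rate is quadratic and the left-hand side of \eqref{Eq:Dissipation} is affine in $u$, for each fixed $x$ the function $d$ is a quadratic polynomial $d(x,u) = a(x) + b(x)^{\sf T}u + u^{\sf T}Mu$, where the constant matrix $M = R + J^{\sf T}S + S^{\sf T}J + J^{\sf T}QJ$ is exactly the right-hand side of \eqref{Eq:HillMoylan3}, while $a(x) = h(x)^{\sf T}Qh(x) - \nabla V(x)^{\sf T}f(x)$ and $b(x) = 2(QJ+S)^{\sf T}h(x) - G^{\sf T}\nabla V(x)$ record the constant and linear coefficients. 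The crucial point is that the three equations \eqref{Eq:HillMoylan1}--\eqref{Eq:HillMoylan3} are jointly equivalent to the identity $d(x,u) = \|l(x)+Wu\|_2^2$: expanding the square and matching the quadratic, linear, and constant terms in $u$ reproduces \eqref{Eq:HillMoylan3}, \eqref{Eq:HillMoylan2}, and \eqref{Eq:HillMoylan1} respectively, with the factor $\frac{1}{2}$ in \eqref{Eq:HillMoylan2} coming from the cross term $2l^{\sf T}Wu$. Sufficiency is then immediate: given $k$, $W$, and $l$ satisfying the Hill-Moylan equations, $d(x,u) = \|l(x)+Wu\|_2^2 \geq 0$, which is the dissipation inequality.

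For necessity I would start from $d(x,u)\geq 0$ for all $u$ and construct the required data. Sending $\|u\|_2\to\infty$ along arbitrary directions forces $M \succeq \vzeros[]$, so $M$ admits a factorization $M = W_0^{\sf T}W_0$ with $W_0 \in \real^{k_0\times m}$ and $k_0 = \mathrm{rank}(M)$; this is \eqref{Eq:HillMoylan3}. Nonnegativity of the quadratic additionally forces, for each $x$, the linear coefficient $b(x)$ to lie in $\mathrm{Im}(M)$ (otherwise $d$ would be unbounded below along $\ker M$), and the minimized value to be nonnegative, i.e. $r(x) := a(x) - \frac{1}{4}b(x)^{\sf T}M^{\dagger}b(x) \geq 0$, where $M^{\dagger}$ denotes the Moore-Penrose pseudoinverse. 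Completing the square then produces the desired sum-of-squares representation.

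I expect the main obstacle to be matching the linear and constant terms \emph{simultaneously} by a single function $l$ with a \emph{constant} matrix $W$. Matching the linear term requires $W^{\sf T}l(x) = \frac{1}{2}b(x)$, whose minimum-norm solution has squared norm precisely $\frac{1}{4}b(x)^{\sf T}M^{\dagger}b(x)$, whereas matching the constant term requires $\|l(x)\|_2^2 = a(x)$; the gap between the two is exactly the residual $r(x)$. To absorb $r(x)$ I would enlarge the factorization by one dimension, taking $k = k_0+1$ and letting $W$ be $W_0$ with an additional zero row appended, so that the last coordinate of $W$ vanishes; setting the corresponding entry of $l(x)$ to $\sqrt{r(x)}$ then contributes $r(x)$ to $\|l(x)+Wu\|_2^2$ without disturbing the terms already matched through $W_0$. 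Since $r$ is continuous whenever $V$, $f$, and $h$ are, the resulting $l$ is a bona fide (indeed continuous) map $\map{l}{\mathcal{X}}{\real^k}$, which completes the construction and hence the equivalence.
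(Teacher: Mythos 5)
Your proof is correct. One contextual point first: the paper never proves Lemma \ref{Lem:HillMoylan} itself --- it is stated as a classical result and cited to Hill and Moylan --- so the natural benchmark is the paper's proof of its EID analogue, Lemma \ref{Lem:HillMoylanIncremental}, which is where this type of argument appears in-paper. Both you and that proof share the same overall philosophy: reduce dissipativity to pointwise nonnegativity of the dissipation function $d(x,u)$, observe that $d$ is quadratic in $u$ with constant quadratic block $\widehat{R} = R + J^{\sf T}S + S^{\sf T}J + J^{\sf T}QJ$, and recognize the Hill--Moylan equations as the statement $d(x,u) = \|l(x)+Wu\|_2^2$. The genuine difference is in how the \emph{constant} matrix $W$ is produced in the necessity direction. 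The paper factorizes the $(m+1)\times(m+1)$ positive semidefinite coefficient matrix $\mathcal{D}$ pointwise, which yields a state-dependent factor $W(\cdot)$, and then needs a separate argument (its Lemma \ref{Lem:Orthogonal}) showing the pointwise factors can be rotated by orthogonal matrices onto a single constant $W$. You instead fix a constant factorization $\widehat{R} = W_0^{\sf T}W_0$ once and for all, solve the linear-term matching $W_0^{\sf T}l_0(x) = \tfrac{1}{2}b(x)$ exactly --- solvability being guaranteed because nonnegativity in $u$ forces $b(x)\in\mathrm{Im}(\widehat{R})$ --- and absorb the Schur-complement residual $r(x) = a(x)-\tfrac{1}{4}b(x)^{\sf T}\widehat{R}^{\dagger}b(x)\geq 0$ by appending the coordinate $\sqrt{r(x)}$ to $l$ against an appended zero row of $W$. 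This buys you a constant $W$ by construction, avoids the orthogonal-alignment lemma entirely, and gives the explicit dimension $k = \mathrm{rank}(\widehat{R})+1$ together with continuity of $l$ for free. What the paper's heavier pointwise-factorization route buys, in turn, is that it extends more directly to the EID setting, where one must additionally extract the incremental structure $\ell(x,\bar{x}) = l(x)-l(\bar{x})+Tq(x,\bar{x})$ of the factor, something your completion-of-squares construction does not by itself address.
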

\smallskip
%

In most applications, the first equation in \eqref{Eq:HillMoylan} enforces some type of stability; the remaining equations ensure a proper matching of inputs and outputs to generate the supply rate \eqref{Eq:SupplyNormal}. {\tb  When specialized to LTI systems $\dot{x} = Fx + Gu\,, y = Hx + Ju$, with quadratic storage functions $V(x) = x^{\sf T}Px$, $P = P^{\sf T} \succeq \vzeros[]$, Lemma \ref{Lem:HillMoylan} states that dissipativity with respect to the quadratic supply rate \eqref{Eq:SupplyNormal} is equivalent to the existence of an integer $k > 0$ and matrices $L \in \real^{k \times n}, W \in \real^{k \times m}$ solving the linear matrix equality
\begin{equation}\label{Eq:KYP}
\begin{aligned}
\begin{bmatrix}
F^{\sf T}P + PF & PG\\
G^{\sf T}P & \vzeros[]
\end{bmatrix} &- \begin{bmatrix}
H & J\\
\vzeros[] & I_m
\end{bmatrix}^{\sf T}\begin{bmatrix}
Q & S\\
S^{\sf T} & R
\end{bmatrix}\begin{bmatrix}
H & J\\
\vzeros[] & I_m
\end{bmatrix}\\
&+ \begin{bmatrix}
L^{\sf T} \\ W^{\sf T}
\end{bmatrix}\begin{bmatrix}
L & W
\end{bmatrix} = \vzeros[]\,.
\end{aligned}
\end{equation}


}
%

\section{Equilibrium-Independent Dissipativity for Continuous-Time Control-Affine Systems}
\label{Sec:EID}

The presented state-space definitions of dissipativity implicitly reference a specific equilibrium configuration (the origin). Often however, we are interested in operating a control system around an equilibrium configuration $(\bar{u},\bar{x},\bar{y})$, and we wish to establish input/output properties with respect to this forced equilibrium configuration. In general, verifying dissipativeness with respect to the forced equilibrium must be done with a \emph{new} storage candidate $V_{\bar{x}}(x)$, which depends on the equilibrium $\bar{x}$. Simply shifting a storage function $V(x)$ as used in Lemma \ref{Lem:HillMoylan} need not suffice, as the following simple example shows.

\begin{example}{\bf (Second-Order System):}\label{Ex:TwoStateSystem}
Consider the second-order system
$$
\begin{aligned}
\dot{x}_1 &= x_2\,, \quad \dot{x}_2 = -\nabla U(x_1) - x_2 + u\\
y &= x_2
\end{aligned}
$$
where $\map{U}{\real}{\real}$ is {\tb differentiable and} strictly convex, with $\nabla U(0) = 0$. Clearly $(\bar{u},\bar{x}_1,\bar{x}_2,\bar{y}) = (0,0,0,0)$ is an equilibrium configuration, and the storage function $V(x) = \frac{1}{2}x_2^{\sf T}x_2 + U(x_1) - U(0)$ satisfies $V(0,0) = 0$ and certifies output-strict passivity:
$$
\begin{aligned}
\dot{V} = \nabla U(x_1)\cdot x_2 - x_2\cdot\nabla U(x_1) - x_2^2 + x_2u &= -y^2 + yu\\
&:= {\sf w}(u,y)\,.
\end{aligned}
$$
Consider now a forced equilibrium configuration $(\bar{u},\bar{x}_1,0,0)$, where {\tb $\bar{u} = k_u(\bar{x}) := \nabla U(\bar{x}_1)$}. A natural choice for a storage candidate is $V_{\bar{x}}(x) =  V(x) - U(\bar{x}_1)$, satisfying $V_{\bar{x}}(\bar{x}_1,0) = 0$. However, a similar calculation shows that
$$
\begin{aligned}
\dot{V}_{\bar{x}} &= -(y-\bar{y})^2 + (y-\bar{y})u\\
&\neq {\sf w}(u-\bar{u},y-\bar{y})\,,
\end{aligned}
$$
and therefore $V_{\bar{x}}(x)$ does not establish the desired equilibrium-independent passivity property. \hfill \oprocend
\end{example}


\smallskip

The concept of equilibrium-independent dissipativity (EID) requires dissipativity of a system with respect to any equilibrium configuration \cite{GHH-MA-AKP:11,MB-DZ-FA:14,MA-CM-AP:16}. Our definition roughly follows \cite{MB-DZ-FA:14,MA-CM-AP:16}.

\medskip

\begin{definition}{\bf (Equilibrium-Independent Dissipativity):}\label{Def:EID}
The control-affine system \eqref{Eq:NonlinearSystem} is equilibrium-independent dissipative (EID) with supply rate $\map{\sf w}{\mathcal{U}\times\mathcal{Y}}{\real}$ if, for every equilibrium $\bar{x} \in \mathcal{E}_{\Sigma}$, there exists a continuously-differentiable storage function $\map{V_{\bar{x}}}{\mathcal{X}}{\real_{\geq 0}}$ such that $V_{\bar{x}}(\bar{x}) = 0$ and
\begin{equation}\label{Eq:EID}
\frac{\mathrm{d}}{\mathrm{d}t} V_{\bar{x}}(x(t)) := \nabla V_{\bar{x}}(x)^{\sf T}(f(x)+Gu) \leq {\sf w}(u-\bar{u},y-\bar{y})\,,
\end{equation}
{\tb for all $t \geq 0$ and all measurable inputs $u(\cdot) \in \mathscr{L}_{2e}^{m}[0,\infty)$,} where $\bar{u} = k_u(\bar{x})$, $\bar{y} = k_y(\bar{x})$. A set of storage functions $\{V_{\bar{x}}(x)\,,\,\, \bar{x} \in \mathcal{E}_{\Sigma}\}$ satisfying \eqref{Eq:EID} is an \emph{EID storage function family}.
\end{definition}

\smallskip

Note that in Definition \ref{Def:EID}, the {\tb supply rate ${\sf w}(\cdot,\cdot)$ does not} depend on $\bar{x}$. In other words, EID requires uniformity in the supply rate across all assignable equilibrium points.

Suppose that $\tilde{x} \in \mathcal{E}_{\Sigma}$ is another assignable equilibrium point with associated equilibrium inputs/outputs $\tilde{u} = k_u(\tilde{x})$ and $\tilde{y} = k_y(\tilde{x})$. If one selects $(x,u) = (\tilde{x},\tilde{u})$ in Definition \ref{Def:EID}, then the left-hand side of \eqref{Eq:EID} becomes zero and we find that ${\sf w}(\tilde{u}-\bar{u},\tilde{y}-\bar{y}) \geq 0$. One quickly arrives at the following result.

\smallskip


\begin{lemma}{\bf (I/O Relation Constraint):}\label{Lem:IOMappings}
If $\Sigma$ is EID, then its equilibrium I/O relation $\mathcal{K}_{\Sigma}$ satisfies
\begin{equation}\label{Eq:IOMapping}
\begin{bmatrix}
\bar{y}-\tilde{y}\\
\bar{u} - \tilde{u}
\end{bmatrix}^{\sf T}
\begin{bmatrix}
Q & S\\
S^{\sf T} & R
\end{bmatrix}\begin{bmatrix}
\bar{y}-\tilde{y}\\
\bar{u} - \tilde{u}
\end{bmatrix} \geq 0
\end{equation}
for any two pairs $(\bar{u},\bar{y}), (\tilde{u},\tilde{y}) \in \mathcal{K}_{\Sigma}$. 
\end{lemma}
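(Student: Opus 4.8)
The plan is to exploit the observation already made in the paragraph immediately preceding the statement: EID implies that evaluating the dissipation inequality \eqref{Eq:EID} at a \emph{second} equilibrium configuration forces the supply rate to be nonnegative. The entire content of the lemma is to turn that single observation into the stated inequality \eqref{Eq:IOMapping}, so the proof should be short and essentially a matter of unwinding the definitions.

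Concretely, I would proceed as follows. Fix two assignable equilibria $\bar{x}, \tilde{x} \in \mathcal{E}_{\Sigma}$, with associated equilibrium inputs/outputs $(\bar{u},\bar{y}) = (k_u(\bar{x}),k_y(\bar{x}))$ and $(\tilde{u},\tilde{y}) = (k_u(\tilde{x}),k_y(\tilde{x}))$; by construction these pairs lie in $\mathcal{K}_{\Sigma}$, and conversely every element of $\mathcal{K}_{\Sigma}$ arises this way, so it suffices to argue for such pairs. Since $\Sigma$ is EID, Definition \ref{Def:EID} furnishes a storage function $V_{\bar{x}}$ satisfying \eqref{Eq:EID} for the equilibrium $\bar{x}$, valid for all inputs. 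The key step is to consider the constant input $u \equiv \tilde{u}$ together with the system resting at the state $x = \tilde{x}$. Because $(\tilde{u},\tilde{x},\tilde{y})$ is an equilibrium configuration, \eqref{Eq:NonlinearEquilibrium} gives $f(\tilde{x}) + G\tilde{u} = \vzeros[n]$, so the right-hand side of the identity in \eqref{Eq:EID} vanishes:
\begin{equation*}
\nabla V_{\bar{x}}(\tilde{x})^{\sf T}\bigl(f(\tilde{x}) + G\tilde{u}\bigr) = 0\,.
\end{equation*}
The output at this configuration is exactly $y = h(\tilde{x}) + J\tilde{u} = \tilde{y}$. Substituting into \eqref{Eq:EID} therefore yields $0 \leq {\sf w}(\tilde{u}-\bar{u},\tilde{y}-\bar{y})$, and expanding ${\sf w}$ via its quadratic form \eqref{Eq:SupplyNormal} gives precisely \eqref{Eq:IOMapping} (after noting that the quadratic form is symmetric under the sign flip $(\tilde{u}-\bar{u},\tilde{y}-\bar{y}) \mapsto (\bar{u}-\tilde{u},\bar{y}-\tilde{y})$, which is how the inequality is displayed).

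The one point deserving care — which I regard as the main (mild) obstacle — is justifying that the dissipation inequality may legitimately be evaluated \emph{pointwise} at the resting configuration $x = \tilde{x}$, rather than only along a genuine forced trajectory. Here the control-affine form and the use of the \emph{differential} version of the dissipation inequality in \eqref{Eq:EID} are exactly what make this clean: \eqref{Eq:EID} is stated as an inequality on the gradient expression $\nabla V_{\bar{x}}(x)^{\sf T}(f(x)+Gu)$ holding for all $(x,u)$ reachable under admissible inputs, so one need only verify that $(x,u) = (\tilde{x},\tilde{u})$ is such an admissible point — which it is, since the constant input $\tilde{u} \in \mathscr{L}_{2e}^m[0,\infty)$ keeps the system at $\tilde{x}$ for all $t \geq 0$. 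No trajectory-completeness or limiting argument is needed; the nonnegativity drops out directly because the state-derivative term is annihilated by the equilibrium condition. Consequently the proof reduces to the substitution above.
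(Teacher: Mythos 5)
Your proposal is correct and follows exactly the paper's own argument: the paragraph preceding the lemma evaluates the EID inequality \eqref{Eq:EID} at $(x,u)=(\tilde{x},\tilde{u})$, where the left-hand side vanishes by the equilibrium condition $f(\tilde{x})+G\tilde{u}=\vzeros[n]$, giving ${\sf w}(\tilde{u}-\bar{u},\tilde{y}-\bar{y})\geq 0$, which is \eqref{Eq:IOMapping} after the sign flip in the quadratic form. Your additional remark justifying pointwise evaluation at the resting configuration is a fair (if mild) point of care that the paper leaves implicit.
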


\medskip

In the square case $m=p$ with $(Q,S,R) = (\vzeros[],\frac{1}{2}I_m,\vzeros[])$, the inequality \eqref{Eq:IOMapping} says that $\mathcal{K}_{\Sigma}$ is a \emph{monotone} (i.e., incrementally passive) relation. Monotone relations have been extensively studied in the convex analysis literature, but most useful results require a slightly stronger property termed \emph{maximal} monotonicity \cite[Chap. 20]{HB-PC:11}. Lemma \ref{Lem:MaxMonotone} in the appendix presents some sufficient conditions which ensure that a monotone equilibrium I/O relation is $\mathcal{K}_{\Sigma}$ is maximally monotone.




For nonlinearities $\map{\psi}{\mathcal{D} \subset \real^m}{\real^p}$, all storage functions in Definition \ref{Def:EID} are taken as zero and $\psi$ is EID if
\begin{equation}\label{Eq:EIDNonlinearity}
\begin{bmatrix}
\psi(z_2)-\psi(z_1)\\
z_2 - z_1
\end{bmatrix}^{\sf T}
\begin{bmatrix}
Q & S\\
S^{\sf T} & R
\end{bmatrix}\begin{bmatrix}
\psi(z_2)-\psi(z_1)\\
z_2 - z_1
\end{bmatrix} \geq 0
\end{equation}
for every $z_1,z_2 \in \mathcal{D}$. In the square case where $m = p$, EID encompasses several standard classes of mappings associated with gradients of convex functions \cite{HB-PC:11}, including
\begin{enumerate}
\item \emph{monotone:} $Q = \vzeros[]$, $S = \frac{1}{2}I_m$, $R = \vzeros[]$,
\item \emph{$\nu$-strongly monotone:} $Q = \vzeros[]$, $S = \frac{1}{2}I_m$, $R = -\nu I_m$,
\item \emph{$\rho$-cocoercive:} $Q = -\rho I_m$, $S = \frac{1}{2}I_m$, $R = \vzeros[]$,
\end{enumerate}
as well as \emph{$\gamma$-Lipschitz} mappings with $Q = -I_m$, $S = \vzeros[]$, and $R = \gamma^2 I_m$.

%





\subsection{Hill-Moylan Conditions for EID}
\label{Sec:Main}

Our first major result gives a version of Lemma \ref{Lem:HillMoylan} appropriate for EID systems. The Lyapunov construction is inspired by \cite{BJ-RO-EGC-FC:07}, and provides a convenient parameterization of the EID storage function family $\{V_{\bar{x}}(x), x \in \mathcal{E}_{\Sigma}\}$.

\smallskip

\begin{lemma}{\bf (Hill-Moylan Conditions for EID):}\label{Lem:HillMoylanIncremental}
Consider the control-affine system $\Sigma$ in \eqref{Eq:NonlinearSystem}. Let $\map{V}{\mathcal{X}}{\real_{\geq 0}}$ be  continuously differentiable and convex, and for $\bar{x}\in\mathcal{E}_{\Sigma}$, let
\begin{equation}\label{Eq:Bregman}
V_{\bar{x}}(x) := V(x) - V(\bar{x}) - \nabla V(\bar{x})^{\sf T}(x-\bar{x})\,.
\end{equation}
The system $\Sigma$ is EID with respect to the {\tb quadratic} supply rate ${\sf w}(u,y)$ in \eqref{Eq:SupplyNormal} with storage function family $\{V_{\bar{x}}(x)\,,\,\,\bar{x} \in \mathcal{E}_{\Sigma}\}$ if and only if there exists an integer $k > 0$, a matrix $W \in \real^{k \times m}$, {\tb and a function $\map{\ell}{\mathcal{X}\times\mathcal{X}}{\real^k}$ such that
\begin{subequations}\label{Eq:HillMoylanEID}
\begin{align}
\label{Eq:HillMoylanEID1}
&\begin{aligned}
&\hspace{-2em}\,[\nabla V(x)-\nabla V(\bar{x})]^{\sf T}[f(x)-f(\bar{x})]\\
&\hspace{-2em}  = [h(x)-h(\bar{x})]^{\sf T}Q[h(x)-h(\bar{x})] -\|\ell(x,\bar{x})\|_2^2\\
\end{aligned}\\
\label{Eq:HillMoylanEID2}
&\begin{aligned}
\hspace{-3em}\frac{1}{2}[\nabla V(x)-\nabla V(\bar{x})]^{\sf T}G &= [h(x)-h(\bar{x})]^{\sf T}(QJ+S)\\
&\quad - \ell(x,\bar{x})^{\sf T}W\\
\end{aligned}\\
\label{Eq:HillMoylanEID3}
W^{\sf T}W &= R + J^{\sf T}S + S^{\sf T}J + J^{\sf T}QJ
\end{align}
\end{subequations}
for all $(x,\bar{x}) \in \mathcal{X} \times \mathcal{E}_{\Sigma}$. The function $\ell(x,\bar{x})$ appearing in \eqref{Eq:HillMoylanEID1}--\eqref{Eq:HillMoylanEID2} may always be chosen to have the form 
$$
\ell(x,\bar{x}) = l(x)-l(\bar{x}) + Tq(x,\bar{x})\,,
$$
where $\map{l}{\mathcal{X}}{\real^k}$, the columns of $T \in \real^{k \times r}$ with $r = \dim(\ker(W^{\sf T}))$ form a basis for $\ker(W^{\sf T})$, and $\map{q}{\mathcal{X}\times\mathcal{X}}{\real^r}$ satisfies $q(x,x) = \vzeros[r]$ for all $x \in \mathcal{X}$.
}
\end{lemma}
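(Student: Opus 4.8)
The plan is to recast the trajectory-wise inequality \eqref{Eq:EID} as a pointwise algebraic inequality and then run an incremental version of the classical Hill-Moylan argument (Lemma \ref{Lem:HillMoylan}), handling the structural form of $\ell$ as a separate, final step. I would first record three facts. Convexity of $V$ and the first-order convexity inequality give $V_{\bar{x}}(x) \geq 0$ with $V_{\bar{x}}(\bar{x}) = 0$, so $\{V_{\bar{x}}\}$ is a legitimate nonnegative storage family; differentiating \eqref{Eq:Bregman} yields $\nabla V_{\bar{x}}(x) = \nabla V(x) - \nabla V(\bar{x})$; and for $\bar{x}\in\mathcal{E}_{\Sigma}$ with $\bar{u} = k_u(\bar{x})$, the equilibrium relation \eqref{Eq:NonlinearEquilibrium} gives $f(\bar{x}) + G\bar{u} = \vzeros[n]$, so that $f(x)+Gu = [f(x)-f(\bar{x})] + G(u-\bar{u})$ and $y-\bar{y} = [h(x)-h(\bar{x})] + J(u-\bar{u})$.

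Since $\Sigma$ is control-affine with trajectories defined for arbitrary initial states and arbitrary constant inputs, evaluating \eqref{Eq:EID} at $t=0$ shows that EID with the family $\{V_{\bar{x}}\}$ is equivalent to the pointwise inequality ${\sf w}(u-\bar{u},y-\bar{y}) - [\nabla V(x)-\nabla V(\bar{x})]^{\sf T}(f(x)+Gu) \geq 0$ holding for all $(x,\bar{x},u) \in \mathcal{X}\times\mathcal{E}_{\Sigma}\times\mathcal{U}$. Substituting the identities above and expanding the quadratic supply rate \eqref{Eq:SupplyNormal}, this gap becomes a scalar quadratic in the shifted input $\tilde{u} := u-\bar{u}$, whose Hessian is $R + J^{\sf T}S + S^{\sf T}J + J^{\sf T}QJ$, whose linear coefficient is built from $(QJ+S)^{\sf T}[h(x)-h(\bar{x})] - \frac{1}{2}G^{\sf T}[\nabla V(x)-\nabla V(\bar{x})]$, and whose constant term is $[h(x)-h(\bar{x})]^{\sf T}Q[h(x)-h(\bar{x})] - [\nabla V(x)-\nabla V(\bar{x})]^{\sf T}[f(x)-f(\bar{x})]$.

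I would then invoke the standard fact that a quadratic $\tilde{u}\mapsto \tilde{u}^{\sf T}M\tilde{u}+2b^{\sf T}\tilde{u}+c$ with fixed symmetric $M$ is nonnegative for all $\tilde{u}$ if and only if it is a perfect square $\|W\tilde{u}+\ell\|_2^2$ for some $W$ with $W^{\sf T}W = M \succeq \vzeros[]$ and some vector $\ell$ (with $k$ chosen large enough to absorb the slack $c - b^{\sf T}M^{\dagger}b \geq 0$ into extra components). Matching the quadratic, linear, and constant coefficients of the gap against $\|W\tilde{u}+\ell\|_2^2$ produces exactly \eqref{Eq:HillMoylanEID3}, \eqref{Eq:HillMoylanEID2}, and \eqref{Eq:HillMoylanEID1}, respectively, with $\ell = \ell(x,\bar{x})$; conversely, given \eqref{Eq:HillMoylanEID} the gap equals $\|W(u-\bar{u})+\ell(x,\bar{x})\|_2^2 \geq 0$, recovering EID. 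This settles the biconditional.

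For the structural claim, I would decompose $\real^k = \mathrm{Im}(W)\oplus\ker(W^{\sf T})$ and split $\ell = \ell_{\parallel}+\ell_{\perp}$. Transposing \eqref{Eq:HillMoylanEID2} shows $W^{\sf T}\ell = b(x,\bar{x})$, which constrains only $\ell_{\parallel}$; crucially, $b(x,\bar{x}) = \beta(x)-\beta(\bar{x})$ with $\beta(x) := (QJ+S)^{\sf T}h(x) - \frac{1}{2}G^{\sf T}\nabla V(x)$, so the minimum-norm solution $\ell_{\parallel} = W(W^{\sf T}W)^{\dagger}b$ inherits the difference form $l(x)-l(\bar{x})$ with $l(x) := W(W^{\sf T}W)^{\dagger}\beta(x)$. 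The free component $\ell_{\perp}\in\ker(W^{\sf T})$ may be written $\ell_{\perp} = Tq(x,\bar{x})$ in the basis $T$, with its norm fixed by \eqref{Eq:HillMoylanEID1}; setting $x=\bar{x}$ makes all increments vanish, forcing $\ell(x,x) = \vzeros[k]$ and $\ell_{\parallel}(x,x) = \vzeros[k]$, hence $q(x,x) = \vzeros[r]$ by independence of the columns of $T$. I expect the main obstacle to be exactly this final step --- realizing the free component as a genuine function $q(x,\bar{x})$ uniformly over $(x,\bar{x})$ while matching the norm constraint \eqref{Eq:HillMoylanEID1}, and confirming that the parallel part splits additively as $l(x)-l(\bar{x})$. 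The preceding steps are faithful incremental transcriptions of the classical Hill-Moylan computation.
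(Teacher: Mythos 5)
Your proof is correct, and while it follows the same overall skeleton as the paper's --- reduce EID to pointwise nonnegativity of a quadratic in the shifted input $u-\bar{u}$, then characterize that nonnegativity by a perfect-square factorization whose coefficient matching yields \eqref{Eq:HillMoylanEID1}--\eqref{Eq:HillMoylanEID3} --- it replaces both of the paper's key technical devices with more elementary linear algebra. For the existence of a \emph{constant} $W$, the paper first factorizes the Gram matrix $\mathcal{D}(x,\bar{x})\succeq \vzeros[]$ pointwise, obtaining a state-dependent $W(x,\bar{x})$, and then invokes an orthogonal-freedom argument (Lemma \ref{Lem:Orthogonal}) to rotate it to a constant matrix; you instead fix $W$ up front from $\widehat{R} = W^{\sf T}W \succeq \vzeros[]$ and solve for $\ell(x,\bar{x})$ directly, using $b(x)-b(\bar{x}) \in \mathrm{Im}(\widehat{R})$ and absorption of the Schur slack $a - b^{\sf T}\widehat{R}^{\dagger}b \geq 0$ into an extra coordinate, so no rotation lemma is needed. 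For the structural form of $\ell$, the paper runs a cocycle argument (Lemma \ref{Lem:DifferenceFunction}, applied twice, with a particular-plus-homogeneous solution decomposition); you instead use the orthogonal split $\real^k = \mathrm{Im}(W)\oplus\ker(W^{\sf T})$, note that \eqref{Eq:HillMoylanEID2} pins down the parallel component uniquely as $W(W^{\sf T}W)^{\dagger}[\beta(x)-\beta(\bar{x})]$, whose linearity in $\beta$ makes the difference structure $l(x)-l(\bar{x})$ automatic, while the free component lives in $\mathrm{span}(T)$ and \eqref{Eq:HillMoylanEID1} at $x=\bar{x}$ forces $q(x,x)=\vzeros[r]$. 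Your route buys explicitness (a closed-form $l(x) = W\widehat{R}^{\dagger}\beta(x)$) and in fact proves the slightly stronger statement that \emph{every} admissible $\ell$ has the claimed form, not merely that one may be so chosen; the paper's factorization route is less explicit but is the template it reuses verbatim for the discrete-time analog (Lemma \ref{Lem:DTHillMoylanIncremental}), where the same Gram-matrix argument carries over. One shared gloss, not a gap specific to you: the algebraic conditions hold only on $\mathcal{X}\times\mathcal{E}_{\Sigma}$, so $\ell(x,x)=\vzeros[k]$ is forced only for $x \in \mathcal{E}_{\Sigma}$, and $q(x,x)=\vzeros[r]$ on the rest of the diagonal is a free (re)definition --- the paper elides this in exactly the same way.
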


\smallskip

From a procedural point of view, Lemma \ref{Lem:HillMoylanIncremental} says that if one can find a convex function $V(x)$ along with {\tb $\ell(x,\bar{x})$ and $W$} satisfying \eqref{Eq:HillMoylanEID1}--\eqref{Eq:HillMoylanEID3}, then \eqref{Eq:Bregman} parameterizes the entire EID storage function family certifying EID with quadratic supply rate \eqref{Eq:SupplyNormal}. Here we have opted to state the result in terms of the existence of these quantities for the particular EID storage function $V_{\bar{x}}(x)$ in \eqref{Eq:Bregman}, rather than infer the existence of an EID storage function family from an appropriately defined input/output EID property. This choice conforms with how storage functions are selected in practice, and highlights the utility of the particular parameterization \eqref{Eq:Bregman}.

\smallskip

\begin{remark}{\bf (Incremental Stability):}\label{Rem:IncrementalStability}
The condition \eqref{Eq:HillMoylanEID1} strengthens the standard stability-like condition \eqref{Eq:HillMoylan1}, requiring instead an incremental-stability-like property. To see why this terminology is appropriate, consider the case of a quadratic storage function $V(x) = \frac{1}{2}x^{\sf T}Px$, $P \succ \vzeros[]$, state measurement $h(x) = x$, and $Q \prec \vzeros[]$. The first condition \eqref{Eq:HillMoylanEID1} then implies that
\begin{equation}\label{Eq:IncrementalLypaunov1}
[P(x-\bar{x})]^{\sf T}f(x) + [P(\bar{x}-x)]^{\sf T}f(\bar{x}) \leq -\varepsilon V_{\bar{x}}(x)
\end{equation}
for some $\varepsilon > 0$. If this holds for all $x, \bar{x} \in \real^n$, it follows that $V_{\bar{x}}(x) = \frac{1}{2}\|x-\bar{x}\|_P^2$ is an \emph{incremental Lyapunov function} \cite{MZ-RM:11} for the unforced system $\dot{x} = f(x)$. Alternatively, it can be shown \cite[Appendix A]{AP-AP-NVDW-HN:04} that \eqref{Eq:IncrementalLypaunov1} implies the matrix inequality
$$
\left(\frac{\partial f}{\partial x}(x)\right)^{\sf T}P + P\left(\frac{\partial f}{\partial x}(x)\right) \prec \vzeros[]\,,
$$
for all $x \in \real^n$, which is the Demidovich condition for convergence/incremental stability/contraction \cite{JWSP-FB:12za}.  {\tb We note however that \eqref{Eq:Bregman} is in general \emph{not} an incremental Lyapunov function.} \hfill \oprocend
\end{remark}

\begin{pfof}{Lemma \ref{Lem:HillMoylanIncremental}}
\emph{Sufficiency:} Let $\bar{x} \in \mathcal{E}_{\Sigma}$ be arbitrary, {\tb with associated equilibrium inputs/outputs given by \eqref{Eq:InputOutputMappings}}. Consider  the storage function candidate \eqref{Eq:Bregman}. It follows from Lemma \ref{Lem:Bregman} that $V_{\bar{x}}(\bar{x}) = 0$ and $V_{\bar{x}}(x) \geq 0$ for all $x \neq \bar{x}$. We compute that {\tb along system trajectories}
\begin{equation}\label{Eq:Vbar}
\begin{aligned}
\dot{V}_{\bar{x}} &= [\nabla V(x)-\nabla V(\bar{x})]^{\sf T}[f(x)+Gu]\\
%
%
%
%
&= [\nabla V(x)-\nabla V(\bar{x})]^{\sf T}[f(x)-f(\bar{x})]\\ 
&\quad + [\nabla V(x)-\nabla V(\bar{x})]^{\sf T}G(u-\bar{u})
\end{aligned}
\end{equation}
where we have used that $f(\bar{x}) + G\bar{u} = \vzeros[n]$ {\tb and, for notational simplicity, suppressed the time-dependence}. Adding the nonnegative quantity {\tb $\|\ell(x,\bar{x})+W(u-\bar{u})\|_2^2$} to the right-hand side of the dissipation rate, we obtain
$$
\begin{aligned}
\dot{V}_{\bar{x}} &\leq [\nabla V(x)-\nabla V(\bar{x})]^{\sf T}[f(x)-f(\bar{x})]\\
&\quad + {\tb \|\ell(x,\bar{x})\|_2^2} + [\nabla V(x)-\nabla V(\bar{x})]^{\sf T}G(u-\bar{u})\\
&\quad + 2{\tb \ell(x,\bar{x})}^{\sf T}W(u-\bar{u}) +  (u-\bar{u})^{\sf T}W^{\sf T}W(u-\bar{u})\,.
%
%
%
%
\end{aligned}
$$
Inserting \eqref{Eq:HillMoylanEID1} and \eqref{Eq:HillMoylanEID3}, we obtain
$$
\begin{aligned}
\dot{V}_{\bar{x}} &\leq  [h(x)-h(\bar{x})]^{\sf T}Q[h(x)-h(\bar{x})]\\
&\quad + [\nabla V(x)-\nabla V(\bar{x})]^{\sf T}G(u-\bar{u})\\
&\quad + 2{\tb \ell(x,\bar{x})}^{\sf T}W(u-\bar{u}) + (u-\bar{u})^{\sf T}\widehat{R}(u-\bar{u})\,,
%
%
%
%
\end{aligned}
$$
where $\widehat{R} = R + J^{\sf T}S + S^{\sf T}J + J^{\sf T}QJ$. {\tb Inserting \eqref{Eq:HillMoylanEID2} into the dissipation inequality, we find }
$$
\begin{aligned}
\dot{V}_{\bar{x}} &\leq [h(x)-h(\bar{x})]^{\sf T}Q[h(x)-h(\bar{x})]\\ 
&\quad + (u-\bar{u})^{\sf T}J^{\sf T}QJ(u-\bar{u}) \\
&\quad {\tb +} 2[h(x)-h(\bar{x})]^{\sf T}(QJ+S)(u-\bar{u})\\ 
&\quad + 2(u-\bar{u})^{\sf T}S^{\sf T}J(u-\bar{u}) +  (u-\bar{u})^{\sf T}R(u-\bar{u})\,.
\end{aligned}
$$
Inserting $h(x) = y - Ju$ and $h(\bar{x}) = \bar{y} - J\bar{u}$, collecting terms, and simplifying, one arrives at $\dot{V}_{\bar{x}} \leq {\sf w}(u-\bar{u},y-\bar{y})$ which shows the system is EID.

\smallskip

\emph{Necessity:} Assume $\Sigma$ is EID with supply rate ${\sf w}(u,y)$ and storage function \eqref{Eq:Bregman}, i.e., for each $\bar{x} \in \mathcal{E}_{\Sigma}$ it holds that $\dot{V}_{\bar{x}} \leq {\sf w}(u-\bar{u},y-\bar{y})$. Defining ${\sf d}_{\bar{x}}(x,u) := -\dot{V}_{\bar{x}} + {\sf w}(u-\bar{u},y-\bar{y})$, we find that
$$
\begin{aligned}
0 &\leq {\sf d}_{\bar{x}}(x,u) = -[\nabla V(x)-\nabla V(\bar{x})]^{\sf T}[f(x)+Gu]\\
&\qquad + (y-\bar{y})^{\sf T}Q(y-\bar{y}) + (u-\bar{u})^{\sf T}R(u-\bar{u})\\
&\qquad + 2(y-\bar{y})^{\sf T}S(u-\bar{u})
\end{aligned}
$$
Substituting for $y$ and $\bar{y}$, after some manipulation one obtains
{\tb
\begin{equation*}
\begin{aligned}
{\sf d}_{\bar{x}}(x,u) 
&= \begin{bmatrix}
1 \\ u-\bar{u}
\end{bmatrix}^{\sf T}\underbrace{\begin{bmatrix}
a(x,\bar{x}) & b(x)^{\sf T}-b(\bar{x})^{\sf T}\\
b(x)-b(\bar{x}) & \widehat{R}
\end{bmatrix}}_{:= \mathcal{D}(x,\bar{x})}\begin{bmatrix}
1 \\ u-\bar{u}
\end{bmatrix}
\end{aligned}
\end{equation*}
}
where
{\tb
\begin{equation}\label{Eq:abdef}
\begin{aligned}
a(x,\bar{x}) &= -[\nabla V(x)-\nabla V(\bar{x})]^{\sf T}[f(x)-f(\bar{x})]\,,\\
& \qquad + [h(x)-h(\bar{x})]^{\sf T}Q[h(x)-h(\bar{x})]\\
b(x)^{\sf T} &= -\frac{1}{2}\nabla V(x)^{\sf T}G + h(x)^{\sf T}(QJ+S)\,,
\end{aligned}
\end{equation}
and $\widehat{R}$ is as before.
Since ${\sf d}_{\bar{x}}(x,u) \geq 0$ for all $u$, we in fact have that $\mathcal{D}(x,\bar{x}) \succeq \vzeros[]$ for all $(x,\bar{x})$ \cite[Lemma 4.1.3]{AJvdS:16}; in particular then $a(x,\bar{x}) \geq 0$ and $\widehat{R} \succeq \vzeros[]$. For each pair $(x,\bar{x})$, the matrix $\mathcal{D}(x,\bar{x})$ may be factorized as
\begin{equation}\label{Eq:MathcalDFactor}
\mathcal{D}(x,\bar{x}) = \begin{bmatrix}
\tilde{\ell}(x,\bar{x})^{\sf T} \\ W(x,\bar{x})^{\sf T}
\end{bmatrix}\begin{bmatrix}
\tilde{\ell}(x,\bar{x}) & W(x,\bar{x})
\end{bmatrix}
\end{equation}
where $\map{\tilde{\ell}}{\mathcal{X} \times \mathcal{X}}{\real^k}$ and $\map{W}{\mathcal{X} \times \mathcal{X}}{\real^{k \times m}}$ for some nonnegative integer $k$.\footnote{{\tb For example, if one uses an SVD decomposition then $k$ can be chosen as the maximum rank of $\mathcal{D}(x,\bar{x})$ over $(x,\bar{x})$, and hence $k \leq m+1$. This is just one option though; see \cite[Chapter 4.1]{AJvdS:16} for some further discussion.}} 
It follows by equating blocks of $\mathcal{D}(x,\bar{x})$ that
\begin{subequations}\label{Eq:FactorEquated}
\begin{align}
\label{Eq:FactorEquated1}
\tilde{\ell}(x,\bar{x})^{\sf T}\tilde{\ell}(x,\bar{x}) &= a(x,\bar{x})\\
\label{Eq:FactorEquated2}
W(x,\bar{x})^{\sf T}\tilde{\ell}(x,\bar{x}) &= b(x)-b(\bar{x})\\
\label{Eq:FactorEquated3}
W(x,\bar{x})^{\sf T}W(x,\bar{x}) &= \widehat{R}
\end{align}
\end{subequations}
for all pairs $(x,\bar{x})$. We now show that without loss of generality, one may select $W(x,\bar{x}) = W$ as constant. From Lemma \ref{Lem:Orthogonal}, \eqref{Eq:FactorEquated3} holds if and only if $W(x,\bar{x}) = \mathcal{O}(x,\bar{x})W$ for an orthogonal matrix $\mathcal{O}(x,\bar{x}) \in \real^{k \times k}$ and a \emph{constant} matrix $W \in \real^{k \times m}$. Defining $\ell(x,\bar{x}) := \mathcal{O}(x,\bar{x})^{\sf T}\tilde{\ell}(x,\bar{x})$ and inserting these expressions into \eqref{Eq:FactorEquated}, the orthogonal matrices vanish and we find that
\begin{subequations}\label{Eq:FactorEquatedIndepX}
\begin{align}
\label{Eq:FactorEquatedIndepX1}
\ell(x,\bar{x})^{\sf T}\ell(x,\bar{x}) &= a(x,\bar{x})\\
\label{Eq:FactorEquatedIndepX2}
W^{\sf T}\ell(x,\bar{x}) &= b(x)-b(\bar{x})\\
\label{Eq:FactorEquatedIndepX3}
W^{\sf T}W &= \widehat{R}
\end{align}
\end{subequations}
which shows that we may indeed select $W(x,\bar{x}) = W$ independent of $(x,\bar{x})$. Substitution of the expressions for $a(x,\bar{x})$, $b(x)$ and $\widehat{R}$ into \eqref{Eq:FactorEquatedIndepX1}--\eqref{Eq:FactorEquatedIndepX3} immediately leads to the three equations \eqref{Eq:HillMoylanEID1}--\eqref{Eq:HillMoylanEID3}. To show the final statement, note from \eqref{Eq:abdef} that $a(x,x) = 0$, and hence it follows from \eqref{Eq:FactorEquatedIndepX2} that $\ell(x,x) = \vzeros[k]$. Using Lemma \ref{Lem:DifferenceFunction}, the equation \eqref{Eq:FactorEquatedIndepX2} holds if and only if
\begin{equation}\label{Eq:Cocycle}
W^{\sf T}(\ell(x_1,x_2) + \ell(x_2,x_3) + \ell(x_3,x_1)) = \vzeros[m]
\end{equation}
for any triple $(x_1,x_2,x_3)$. With $z \in \real^k$ as an auxiliary variable for brevity, observe that a particular solution of the equation $W^{\sf T}z = \vzeros[m]$ in \eqref{Eq:Cocycle} is $z_{\rm par} = \ell(x_1,x_2) + \ell(x_2,x_3) + \ell(x_3,x_1) = \vzeros[k]$. Using Lemma \ref{Lem:DifferenceFunction} once more, this implies that $\ell(x,\bar{x}) = l(x)-l(\bar{x})$ for an appropriate function $\map{l}{\mathcal{X}}{\real^k}$. Let $r := \dim(\ker(W^{\sf T}))$, let $t_1,\ldots,t_r \in \real^{k}$ be a basis for $\ker(W^{\sf T})$, and set $T := \begin{bmatrix}t_1 & \cdots & t_r\end{bmatrix}$. Then $W^{\sf T}T = \vzeros[]$ and the homogeneous solution to $W^{\sf T}z = \vzeros[m]$ can be written as
$$
z_{\rm hom} = T\left[q(x_1,x_2) + q(x_2,x_3) + q(x_3,x_1)\right]
$$
for some function $\map{q}{\mathcal{X}\times\mathcal{X}}{\real^r}$ satisfying $q(x,x) = \vzeros[r]$. Combining the particular and homogeneous solutions, it follows that we may take $\ell(x,\bar{x}) = l(x) - l(\bar{x}) + Tq(x,\bar{x})$, which completes the proof.
}
%
%
%
%
%
%
\end{pfof}

\medskip

{\tb The equation \eqref{Eq:HillMoylanEID3} is identical to the third Hill-Moylan condition \eqref{Eq:HillMoylan3}. When $W^{\sf T}$ has full column rank, then the final statement of Lemma \ref{Lem:HillMoylanIncremental} implies that $\ell(x,\bar{x})$ may always be chosen in the form $\ell(x,\bar{x}) = l(x)-l(\bar{x})$. In this case, the second equation \eqref{Eq:HillMoylanEID2} of Lemma \ref{Lem:HillMoylanIncremental} may be alternatively written as
$$
\begin{aligned}
\frac{1}{2}\nabla V(x)G &= h(x)^{\sf T}(QJ+S) - l(x)^{\sf T}W + \xi^{\sf T}
\end{aligned}
$$
for a constant vector $\xi \in \real^m$, which is quite similar to the second Hill-Moylan condition \eqref{Eq:HillMoylan2}.} As a special case of Lemma \ref{Lem:HillMoylanIncremental}, consider the supply rate ${\sf w}(u,y) = -y^{\sf T}y + \gamma^2 u^{\sf T}u$; this corresponds to $\Sigma$ having a finite $\mathscr{L}_2$-gain less than or equal to $\gamma$. The conditions of Lemma \ref{Lem:HillMoylanIncremental} for EID reduce to
\begin{equation}\label{Eq:NonlinearBoundedReal}
\begin{aligned}
[\nabla V(x)-\nabla V(\bar{x})]^{\sf T}&[f(x)-f(\bar{x})] +\|h(x)-h(\bar{x})\|_2^2\\ 
&=  - {\tb \|\ell(x,\bar{x})\|_2^2}\\
\frac{1}{2}[\nabla V(x)-\nabla V(\bar{x})]^{\sf T}G &= -[h(x)-h(\bar{x})]^{\sf T}J - {\tb \ell(x,\bar{x})}^{\sf T}W\\
\gamma^2 I_m - J^{\sf T}J  &= W^{\sf T}W
\end{aligned}
\end{equation}
which is an incremental nonlinear-bounded-real type result (c.f. \cite[Example 1]{HM:76}). The assumption for Lemma \ref{Lem:HillMoylanIncremental} that the input and throughput matrices $G$ and $J$ are state-independent allows for \eqref{Eq:HillMoylanEID2}--\eqref{Eq:HillMoylanEID3} {\tb to remain similar to the corresponding equations in the standard Hill-Moylan result of Lemma \ref{Lem:HillMoylan}. We do not pursue the extension to state-dependent input and throughout matrices here here, but see \cite[Section 3.1]{MA-CM-AP:16} for the case of scalar systems.}

\smallskip

{\tb In \cite[Example 3.1]{MA-CM-AP:16}, the single-input single-output scalar system $\dot{x} = f(x) + u$ with output $y = h(x)$ was shown to be equilibrium-independent passive if $f$ is continuous and decreasing and $h$ is continuous and increasing, with EIP storage function family parameterized as
\begin{equation}\label{Eq:Meissen}
V_{\bar{x}}(x) = \int_{\bar{x}}^x [h(z)-h(\bar{x})]\,\mathrm{d}z\,.
\end{equation}
Since $h$ is increasing, there exists a continuously differentiable convex function $\map{V}{\real}{\real}$ such that $h(x) = \nabla V(x)$, and \eqref{Eq:Meissen} can be seen as a special case of the construction \eqref{Eq:Bregman} used in Lemma \ref{Lem:HillMoylanIncremental}. Concerning the requirement that $f$ be decreasing, the following corollary of Lemma \ref{Lem:HillMoylanIncremental} generalizes this idea to higher-dimensional systems.

\smallskip

{\tb
\begin{corollary}{\bf (Equilibrium-Independent Passive Systems):}
Consider the square control-affine nonlinear system
\begin{equation}\label{Eq:PassiveSystem}
\begin{aligned}
\dot{x} &= f(x) + Gu\,, \quad y = G^{\sf T}\nabla V(x)
\end{aligned}
\end{equation}
where $\map{V}{\real^n}{\real}$ is continuously differentiable and strongly convex. If the mapping $-f \circ \nabla V^{-1}$ is monotone, then \eqref{Eq:PassiveSystem} is equilibrium-independent passive with storage function \eqref{Eq:Bregman}.
\end{corollary}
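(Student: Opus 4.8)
The plan is to apply Lemma~\ref{Lem:HillMoylanIncremental} directly, verifying the three conditions \eqref{Eq:HillMoylanEID1}--\eqref{Eq:HillMoylanEID3} for the passivity data $(Q,S,R) = (\vzeros[],\frac{1}{2}I_m,\vzeros[])$ with the storage family $V_{\bar{x}}$ in \eqref{Eq:Bregman} built from the given $V$. For \eqref{Eq:PassiveSystem} we have $h(x) = G^{\sf T}\nabla V(x)$ and throughput $J = \vzeros[]$, so that $\widehat{R} = R + J^{\sf T}S + S^{\sf T}J + J^{\sf T}QJ = \vzeros[]$. First I would record that $\mu$-strong convexity of $V$ makes $\nabla V$ a continuous, strongly monotone, hence bijective, map of $\real^n$ onto itself (a continuous strongly monotone operator on $\real^n$ is a homeomorphism), so that $\nabla V^{-1}$ is globally well defined and the hypothesis that $-f\circ\nabla V^{-1}$ is monotone is meaningful on all of $\real^n$.

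Two of the three conditions are immediate. For \eqref{Eq:HillMoylanEID3}, since $\widehat{R} = \vzeros[]$, I take $k = 1$ and $W = \vzeros[]$, so that $W^{\sf T}W = \vzeros[]$ holds. For \eqref{Eq:HillMoylanEID2}, with $Q = \vzeros[]$, $S = \frac{1}{2}I_m$, $J = \vzeros[]$ and $W = \vzeros[]$, the right-hand side reduces to $\tfrac{1}{2}[h(x)-h(\bar{x})]^{\sf T}$, and substituting $h = G^{\sf T}\nabla V$ gives $\tfrac{1}{2}[\nabla V(x)-\nabla V(\bar{x})]^{\sf T}G$, which is exactly the left-hand side; thus \eqref{Eq:HillMoylanEID2} holds identically, independent of the choice of $\ell$. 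This is precisely where the special output structure $y = G^{\sf T}\nabla V(x)$ enters.

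The crux is \eqref{Eq:HillMoylanEID1}. With $Q = \vzeros[]$ this condition reads $[\nabla V(x)-\nabla V(\bar{x})]^{\sf T}[f(x)-f(\bar{x})] = -\|\ell(x,\bar{x})\|_2^2$, and so it is solvable for some $\ell$ if and only if the left-hand side is nonpositive for all $(x,\bar{x})$. To establish this I would substitute $p := \nabla V(x)$ and $q := \nabla V(\bar{x})$ into the monotonicity inequality for $-f\circ\nabla V^{-1}$: since $\nabla V^{-1}(p) = x$ and $\nabla V^{-1}(q) = \bar{x}$, monotonicity gives $[-f(x)+f(\bar{x})]^{\sf T}[\nabla V(x)-\nabla V(\bar{x})] \geq 0$, equivalently $[\nabla V(x)-\nabla V(\bar{x})]^{\sf T}[f(x)-f(\bar{x})] \leq 0$. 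I then define the scalar $\ell(x,\bar{x}) := \bigl(-[\nabla V(x)-\nabla V(\bar{x})]^{\sf T}[f(x)-f(\bar{x})]\bigr)^{1/2}$, which is well defined precisely because of this sign, and \eqref{Eq:HillMoylanEID1} holds by construction. As all three conditions then hold for every $(x,\bar{x}) \in \mathcal{X}\times\mathcal{X}$, and in particular for every $(x,\bar{x}) \in \mathcal{X}\times\mathcal{E}_{\Sigma}$, Lemma~\ref{Lem:HillMoylanIncremental} yields that \eqref{Eq:PassiveSystem} is EID with the passivity supply rate, i.e.\ equilibrium-independent passive, with storage family \eqref{Eq:Bregman}.

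The only genuinely substantive point is the global bijectivity of $\nabla V$ needed to make $-f\circ\nabla V^{-1}$ well defined on all of $\real^n$; everything else is a direct translation of monotonicity into the sign of the incremental term via the change of variables $p = \nabla V(x)$. A minor technical caution is that Lemma~\ref{Lem:HillMoylanIncremental} requires only that $\ell$ be a function $\mathcal{X}\times\mathcal{X}\to\real^k$ with no smoothness imposed, so the square-root definition of $\ell$ is admissible without any further regularity check.
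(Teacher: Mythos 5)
Your proposal is correct and follows essentially the same route as the paper's own proof: conditions \eqref{Eq:HillMoylanEID2}--\eqref{Eq:HillMoylanEID3} hold automatically with $W = \vzeros[]$ due to the output structure $y = G^{\sf T}\nabla V(x)$, and \eqref{Eq:HillMoylanEID1} reduces to the sign condition obtained from monotonicity of $-f\circ\nabla V^{-1}$ via the substitution $x_1 = \nabla V(x)$, $x_2 = \nabla V(\bar{x})$. Your explicit square-root construction of $\ell$ and the remark on the bijectivity of $\nabla V$ are just slightly more detailed versions of steps the paper leaves implicit.
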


\begin{proof}.
Since $V$ is continuously differentiable and strongly convex, $x \mapsto \nabla V(x)$ is both maximally and strongly monotone, and is therefore a bijection on $\mathcal{X}$ \cite[Example 22.9]{HB-PC:11}. Therefore, $\tilde{f} := -f \circ \nabla V^{-1}$ is well-defined, and by assumption satisfies
\begin{equation}\label{Eq:fincr}
(x_1-x_2)^{\sf T}(\tilde{f}(x_1)-\tilde{f}(x_2)) \geq 0\,, \qquad x_1, x_2 \in \mathcal{X}\,.
\end{equation}
For the system \eqref{Eq:PassiveSystem} with supply rate $(Q,S,R) = (\vzeros[],\frac{1}{2}I_m,\vzeros[])$ one may quickly verify that \eqref{Eq:HillMoylanEID2}--\eqref{Eq:HillMoylanEID3} automatically hold with $W = \vzeros[]$, and therefore \eqref{Eq:HillMoylanEID1} holds if and only if
\begin{equation}\label{Eq:fincr2}
[\nabla V(x)-\nabla V(\bar{x})]^{\sf T}[f(x)-f(\bar{x})] \leq 0
\end{equation}
for all $(x,\bar{x}) \in \mathcal{X} \times \mathcal{E}_{\Sigma}$. Setting $x_1 := \nabla V(x)$, $x_2 = \nabla V(\bar{x})$, we see that \eqref{Eq:fincr} implies \eqref{Eq:fincr2}, which shows the result.
\end{proof}
}

}

\smallskip

{\tb
\begin{remark}{\bf (Computational Verification of EID):}
While appropriate functions $V(x)$ for the Lyapunov construction \eqref{Eq:Bregman} can sometimes be chosen for a system based on intuition, a suitable choice may not always be obvious. To verify the EID property for a given nonlinear system \eqref{Eq:NonlinearSystem} using Lemma \ref{Lem:HillMoylanIncremental}, in general one would seek to find a differentiable function $V(x)$ such is convex and establishes the dissipation inequality, i.e., 
$$
\begin{aligned}
[\nabla V(x)-\nabla V(\bar{x})]^{\sf T}(x-\bar{x}) &\geq 0\\
[\nabla V(x)-\nabla V(\bar{x})]^{\sf T}[f(x)+Gu] &\leq {\sf w}(u-\bar{u},y-\bar{y})
\end{aligned}
$$
for all $(x,\bar{x},u)$ with corresponding values for $(y,\bar{y},\bar{u})$. For LTI systems with quadratic storage functions, these constraints reduce to finding a symmetric matrix $P \succeq \vzeros[]$ such that the left-hand side of \eqref{Eq:KYP} is negative semidefinite; this is a linear matrix inequality. When $f(x)$ and $h(x)$ are polynomial functions, the search for a polynomial function $V(x)$ certifying EID can be cast as a sum-of-squares feasibility problem and solved via semidefinite programming; see \cite{MA-CM-AP:16} for further discussion. \hfill \oprocend
\end{remark}
}

%
%
%




\medskip

\subsection{Illustrative Examples}

{\tb Our first example illustrates the usefulness of the Lyapunov construction \eqref{Eq:Bregman}.}

\begin{example}{\bf (Port-Hamiltonian Systems):}\label{Example:Port}
A port-Hamiltonian system with {\tb state-independent input, dissipation, and interconnection matrices $G$, $\mathcal{R}$, and $\mathcal{J}$} takes the form
{\tb 
\begin{equation}\label{Eq:PortHamiltonian}
\begin{aligned}
\dot{x} &= [\mathcal{J}-\mathcal{R}]\nabla H(x) + Gu + d\\
y &= G^{\sf T}\nabla H(x)\,,
\end{aligned}
\end{equation}
where $\map{H}{\mathcal{X}}{\real_{\geq 0}}$ is a convex function, $d \in \real^n$ is an unknown constant disturbance, and $\mathcal{J},\mathcal{R} \in \real^{n \times n}$ satisfy $\mathcal{J} = -\mathcal{J}^{\sf T}$ and $\mathcal{R} = \mathcal{R}^{\sf T} \succeq \vzeros[]$. The system from Example \ref{Ex:TwoStateSystem} is a special case of the above port-Hamiltonian model. Forced equilibria are determined by
$$
\mathcal{E}_{\Sigma} = \setdef{\bar{x} \in\mathcal{X}}{\exists \bar{u} \in \mathcal{U}\,\,\mathrm{s.t.}\,\,[\mathcal{J}-\mathcal{R}]\nabla H(\bar{x}) + G\bar{u} + d = \vzeros[n]}\,,
$$
and therefore depend on the unknown disturbance $d$. With $V(x) = H(x)$, we have the EID storage candidate $V_{\bar{x}}(x) = H(x) - H(\bar{x}) - \nabla H(\bar{x})^{\sf T}(x-\bar{x})$, and a direct computation shows that
$$
\begin{aligned}
\dot{V}_{\bar{x}} &= -(\nabla H(x)-\nabla H(\bar{x}))^{\sf T}\mathcal{R}(\nabla H(x)-\nabla H(\bar{x}))\\ &\quad + (y-\bar{y})^{\sf T}(u-\bar{u})\,.
\end{aligned}
$$
Note that due to the incremental nature of the dissipation inequality, the unknown constant disturbance $d$ does not appear. The conditions of Lemma \ref{Lem:HillMoylanIncremental} are satisfied with $(Q,S,R) = (\vzeros[],\frac{1}{2}I_m,\vzeros[])$, $k = n$, $W = \vzeros[n \times m]$, and $\ell(x,\bar{x}) = \mathcal{R}^{\frac{1}{2}}[\nabla H(x)-\nabla H(\bar{x})]$. Suppose that we now wish to regulate the output of the system \eqref{Eq:PortHamiltonian} to a desired set point $\bar{y} \in \mathcal{Y}$; we assume there exists an assignable equilibrium $\bar{x} \in \mathcal{E}_{\Sigma}$ such that $\bar{y} = k_y(\bar{x})$. Consider the PI controller with input $e(t)$ and output $y_{\rm c}(t)$:
\begin{equation}\label{Eq:PIControl}
\dot{\zeta} = e\,, \quad y_{\rm c} = K_{\rm P}e + K_{\rm I}\zeta\,,
\end{equation}
where $K_{\rm P}, K_{\rm I} \succ \vzeros[]$. A quick calculation shows that with EID storage function $W_{\bar{\zeta}}(\zeta) = \frac{1}{2}\|\zeta-\bar{\zeta}\|_{K_{\rm I}}^2$, the PI controller \eqref{Eq:PIControl} satisfies the EID inequality
$$
\dot{W}_{\bar{\zeta}} = (y_{\rm c} - \bar{y}_{\rm c})^{\sf T}(e-\bar{e}) - (e-\bar{e})^{\sf T}K_{\rm P}(e-\bar{e})\,.
$$
With the negative feedback interconnection $e = y-\bar{y}$, $u = - y_{\rm c}$, we may add the EID dissipation inequalities to obtain $\dot{V}_{\bar{x}} + \dot{W}_{\bar{\zeta}} \leq - \|y-\bar{y}\|_{K_{\rm P}}^2$. From this point, one can argue in a standard fashion that $y(t) \rightarrow \bar{y}$; under additional assumptions (see Section \ref{Sec:Stability}) global exponential stability of the  corresponding equilibrium $\bar{x} \in \mathcal{E}_{\Sigma}$ can be obtained. This example illustrates the utility of the EID property, namely that tasks such as constant disturbance rejection, certification of subsystem input-output properties, and assessment of closed-loop stability become independent of the operating point being considered. For some recent extensions of these ideas to the case where $\mathcal{J}$ and $\mathcal{R}$ are state-dependent, see \cite{NM-PM-RO-AvdS:17}.
}
 \oprocend
\end{example}

\smallskip

{\tb The next example shows how one may work backwards from a desired supply rate using the algebraic conditions \eqref{Eq:HillMoylanEID}.}

\smallskip

\begin{example}{\bf (Gradient System w/ Feedthrough):}\label{Example:Grad}
Consider the square ($m=p=n$) system
\begin{equation}\label{Eq:Gradient}
\Sigma:\,\begin{cases}
\begin{aligned}
\tau\dot{x} &= -\nabla \phi(x) + gu\\
y &= gx + ju\,,
\end{aligned}
\end{cases}
\end{equation}
where $g, j > 0$, $\tau \succ \vzeros[]$ is diagonal, and $\map{\phi}{\real^n}{\real}$ is a $\mu$-strongly convex and differentiable function, i.e., there exists $\mu > 0$ such that
\begin{equation}\label{Eq:PhiConvex}
(x_1-x_2)^{\sf T}[\nabla \phi(x_1)-\nabla \phi(x_2)] \geq \mu \|x_1-x_2\|_2^2
\end{equation}
for all $x_1, x_2 \in \real^n$. We will use Lemma \ref{Lem:HillMoylanIncremental} to derive conditions under which \eqref{Eq:Gradient} is EID with respect to the supply rate ${\sf w}(u,y) = -\rho y^{\sf T}y - \nu u^{\sf T}u + u^{\sf T}y$, for values $\rho, \nu \geq 0$ to be determined. {\tb To begin,} the condition \eqref{Eq:HillMoylanEID3} becomes
{\tb
\begin{equation}\label{Eq:ExGradW}
W^{\sf T}W = (-\nu + j - \rho j^2)I_n.
\end{equation}
}
Assuming that
\begin{equation}\label{Eq:GradientFFCondition1}
j - \rho j^2 > \nu\,,
\end{equation}
the right-hand side {\tb of \eqref{Eq:ExGradW}} is positive definite, {\tb and} we may take $k = n$ and $W = (j - \rho j^2 - \nu)^{1/2}I_n$. With $V(x) = \frac{1}{2}x^{\sf T}\tau x$ {\tb and $G = \tau^{-1}g$}, the condition \eqref{Eq:HillMoylanEID2} reads (after substituting $W$) as
{\tb 
$$
g(x-\bar{x}) = \left(1 - 2\rho j\right)g(x-\bar{x}) - 2\sqrt{j - \rho j^2 - \nu}\,\ell(x,\bar{x})\,.
$$
}
We may therefore take $\ell(x,\bar{x}) = \beta (x-\bar{x})$, where
$$
\beta = \frac{-g\rho j}{\sqrt{j - \rho j^2 - \nu}}\,.
$$
{\tb
Finally, to establish that the system is EID, we can in fact enforce \eqref{Eq:HillMoylanEID1} as an inequality. With $f(x) = -\tau^{-1}\nabla \phi(x)$, after substitution of $\ell(x,\bar{x})$ \eqref{Eq:HillMoylanEID1} becomes
\begin{equation}\label{Eq:ExGradTemp}
\begin{aligned}
-(x-\bar{x})^{\sf T}&[\nabla \phi(x)-\nabla \phi(\bar{x})]\\ &\leq -\rho g^2\|x-\bar{x}\|_2^2  - \beta^2\|x-\bar{x}\|_2^2\,.
\end{aligned}
\end{equation}
}
{\tb Substituting \eqref{Eq:PhiConvex} into \eqref{Eq:ExGradTemp}, one finds that the required condition for \eqref{Eq:ExGradTemp} to be satisfied is $\mu \geq \rho g^2 + \beta^2$, or equivalently (after some algebra)}
\begin{equation}\label{Eq:GradientFFCondition2}
\rho \leq \frac{\mu}{g^2}\frac{\nu-j}{\nu - j - \mu j^2/g^2}\,.
\end{equation}
The two inequalities \eqref{Eq:GradientFFCondition1},\eqref{Eq:GradientFFCondition2} define the achievable set of EID dissipativity parameters $(\nu,\rho)$ {\tb as a function of $(\mu,j,g)$}; for $g = 1$, this set is plotted in Figure \ref{Fig:RhoNu}. Note that the input-passivity parameter $\nu$ can be increased by increasing the feedthrough $j$, but only at the expense of lowering the achievable size of the output-passivity parameter $\rho$. Moreover, the mere presence of non-zero feedthrough $j$ places limits on achievable values of $\rho$, irrespective of the convexity parameter $\mu$. 
\hfill \oprocend

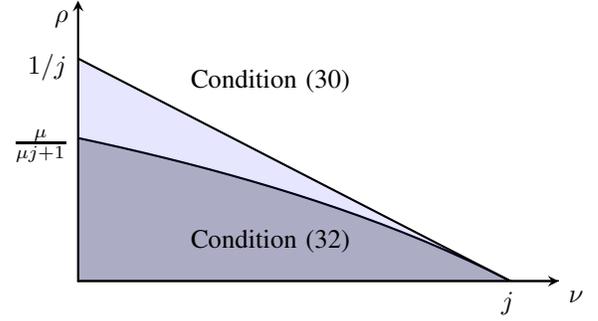
\begin{figure}[t] 
\centering 
\begin{tikzpicture}[every node/.style={scale=1}]

\begin{axis}[
enlargelimits=0,
ymin=0,
xmin=0,
xmax=1,
ymax=1.4,
ticks=none,
xticklabels=\empty,
yticklabels=\empty,
ylabel={$\rho$},
xlabel={$\nu$},
axis lines = left,
width=0.9\columnwidth,
height = 0.6\columnwidth,
]
    \addplot[domain=0:1.2*\feedforward,fill=gray!50] {\strongconvex*(x-\feedforward)/(x-\feedforward-\strongconvex*(\feedforward)^2)}\closedcycle;
        \addplot [thick,color=black,mark={},fill=blue, 
                    fill opacity=0.1]coordinates {
            (0, 1/\feedforward) 
            (\feedforward, 0)
			}\closedcycle;
            \node [rotate=0] at (axis cs:  .4,  1) {Condition \eqref{Eq:GradientFFCondition1}};
            \node [rotate=0] at (axis cs:  0.4,  .2) {Condition \eqref{Eq:GradientFFCondition2}};
    \end{axis}
    
\node [black, right] at (5.5,-0.3) {$j$};
\node [black, right] at (-0.8,2.85) {$1/j$};
\node [black, right] at (-1,1.8) {$\frac{\mu}{\mu j+1}$};
    
\end{tikzpicture}  
\caption{Feasible set for passivity parameters $(\nu,\rho)$ with $g = 1$.} 
\label{Fig:RhoNu}
\end{figure}

\end{example}

\smallskip

{\tb The final example of this section shows how EID may be used to assess the input-output performance of a common continuous-time optimization algorithm.}

\smallskip

\begin{example}{\bf (AHU Saddle-Point Algorithm):}\label{Example:AHU}
Consider the constrained optimization problem
\begin{equation}\label{Eq:ConvexEqualityConstrained}
\begin{aligned}
& \underset{z \in \mathbb{R}^{n_1}}{\text{minimize}}
& & \sum_{i=1}^{n_1} \phi_i(z_i) + \frac{1}{2}(Az-b)^{\sf T}K(Az-b)\\
& \text{subject to}
& & Az = b\,,
\end{aligned}
\end{equation}
where $A \in \real^{n_2 \times n_1}$ has full row rank, $K = K^{\sf T} \succ \vzeros[]$, $b \in \real^{n_2}$, and each map $\map{\phi_i}{\real}{\real}$ is $\mu_i$-strongly convex and differentiable. For simplicity, we set $\phi(z) = \sum_{i=1}^{n_1} \phi_i(z_i)$. The Lagrangian function is given by {\tb $L(z,\lambda) = \phi(z) + \frac{1}{2}(Az-b)^{\sf T}K(Az-b) + \lambda^{\sf T}(Az-b)$}, where $\lambda \in \real^{n_2}$ is a vector of dual variables (multipliers). To calculate the optimizer, the \emph{saddle-point} or \emph{primal-dual} algorithm \cite{JW-NE:11,DF-FP:10,JWSP:16f} performs gradient descent on the primal variables and gradient ascent on the dual variables, which reduces to
\begin{equation}\label{Eq:SaddlePointContinuous}
\dot{z} = -\nabla \phi(z) - A^{\sf T}K(Az-b) - A^{\sf T}\lambda\,,\quad \dot{\lambda} = Az-b\,,
\end{equation}
with composite state vector $x = (z,\lambda)$. Consider now the associated input/output system
\begin{equation}\label{Eq:SaddleIO}
\begin{aligned}
\dot{z} &= -\nabla \phi(z) - A^{\sf T}K(Az-b) - A^{\sf T}\lambda + u\\
\dot{\lambda} &= Az - b\\
y &= z
\end{aligned}
\end{equation}
We claim that \eqref{Eq:SaddleIO} is EID with respect to the supply rate ${\sf w}(u,y) = - y^{\sf T}y + \gamma^2 u^{\sf T}u$ for some $\gamma$ to be determined. The third condition in \eqref{Eq:NonlinearBoundedReal} gives that $W = \gamma I_n$, while with $V(x) = V(z,\lambda) = \frac{\alpha}{2}z^{\sf T}z + \frac{1}{2}\lambda^{\sf T}\lambda$, where $\alpha > 0$, the second condition in \eqref{Eq:NonlinearBoundedReal} yields $\ell(x,\bar{x}) = -\frac{\alpha}{2\gamma}(z-\bar{z})$. A quick computation shows that
$$
\begin{aligned}
&[\nabla V(x)-\nabla V(\bar{x})]^{\sf T}[f(x)-f(\bar{x})]\\
&= -\alpha(z-\bar{z})^{\sf T}[\nabla \phi(z)-\nabla \phi(\bar{z})]\ - \alpha (z-\bar{z})^{\sf T}A^{\sf T}KA(z-\bar{z})\\
&\leq -\alpha\,\lambda_{\rm min}(M + A^{\sf T}KA)\|z-\bar{z}\|_2^2\,,
\end{aligned}
$$
{\tb where $M = \mathrm{diag}(\mu_1,\ldots,\mu_{n_1})$ is the diagonal }matrix of convexity coefficients. The first condition in \eqref{Eq:NonlinearBoundedReal} therefore will hold with inequality sign if
$$
\alpha\lambda_{\rm min}(M + A^{\sf T}KA) \geq 1+\frac{\alpha^2}{4\gamma^2}\,.
$$
The choice $\alpha = 2\gamma^2\lambda_{\rm min}(M + A^{\sf T}KA)$ makes this inequality the tightest it can be, in which case it becomes
\begin{equation}\label{Eq:L2gain}
\gamma \geq \gamma^\star := \frac{1}{\lambda_{\rm min}(M + A^{\sf T}KA)}\,.
\end{equation}
We conclude that the system is EID with the given supply rate for any choice of $\gamma \geq \gamma^\star$, and in particular, with $\gamma = \gamma^\star$. This shows the system has equilibrium-independent $\mathscr{L}_2$-gain less than or equal to $\gamma^\star$. As discussed in \cite{JWSP:16f}, $\gamma^\star$ can in some situations be minimized by a judicious choice of the free parameter matrix $K$. {\tb When $K = \vzeros[]$, the $\mathscr{L}_2$-gain is limited only by the convexity parameters $M$ of the cost functions.} \hfill \oprocend \end{example}

\section{Stability of EID Systems}
\label{Sec:Stability}

This section presents internal and feedback stability results for continuous-time EID systems. The results are natural extensions of classical stability results for dissipative systems, but have not been stated in the literature.

\subsection{Internal Stability of EID Systems}

Standard stability results for dissipative systems proceed along the following lines. {\tb Consider the system $\Sigma$ in \eqref{Eq:NonlinearSystem} under the assumptions of Section \ref{Sec:ClassicDissipativityCT}.} If the system is dissipative with respect to the supply rate \eqref{Eq:SupplyNormal} with storage function $V(x)$ satisfying $V(\vzeros[n]) = 0$ and $V(x) > 0$ for $x \neq \vzeros[n]$, then with zero input the origin $x = \vzeros[n]$ is 1) stable if $Q \preceq \vzeros[]$, 2) asymptotically stable if $Q \prec \vzeros[]$ and $\Sigma$ is zero-state observable\footnote{The system is said to be zero-state observable if no solution of $\dot{x} = f(x)$ can stay within the set $\setdef{x}{h(x) = \vzeros[p]}$ other than $x(t) = \vzeros[n]$.}, and 3) globally asymptotically stable if $Q \prec \vzeros[]$, $\Sigma$ is zero-state observable, and $V(x) \rightarrow \infty$ as $\|x\|_2 \rightarrow \infty$ (i.e., $V$ is radially unbounded).\footnote{We will restrict our attention to cases where the storage function has a strict local minimum at the equilibrium point.}

\smallskip

We begin with an observability definition.

\smallskip

\begin{definition}{\bf (Equilibrium-Independent Observability):}\label{Def:Obsv}
The system \eqref{Eq:NonlinearSystem} is equilibrium-independent observable if, for every $\bar{x} \in \mathcal{E}_{\Sigma}$ with associated equilibrium input/output vectors $\bar{u} = k_u(\bar{x})$ and $\bar{y}=k_y(\bar{x})$, no trajectory of $\dot{x} = f(x) + G\bar{u}$ can remain within the set $\setdef{x \in \mathcal{X}}{h(x)+J\bar{u} = \bar{y}}$ other than the equilibrium trajectory $x(t) = \bar{x}$. 
\end{definition}

\smallskip

Definition \ref{Def:Obsv} is the natural extension of zero-state observability to EID systems, requiring that every forced system be ``zero-state'' observable. Compared to the general discussion of forced equilibria in Section \ref{Sec:ControlAffine}, Definition \ref{Def:Obsv} rules out the possibility that two distinct equilibria $\bar{x},\tilde{x} \in \mathcal{E}_{\Sigma}$ yield the same input/output pairs through \eqref{Eq:InputOutputMappings}. 

\smallskip

{\tb
\begin{proposition}{\bf (Observability \& Equilibrium Uniqueness):}\label{Prop:UniqueEq}
If the system \eqref{Eq:NonlinearSystem} is equilibrium-independent observable, then for a given equilibrium I/O pair $(\bar{u},\bar{y}) \in \mathcal{K}_{\Sigma}$, there is exactly one $\bar{x} \in \mathcal{E}_{\Sigma}$ satisfying \eqref{Eq:InputOutputMappings}.
\end{proposition}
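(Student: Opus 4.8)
The plan is to treat existence and uniqueness separately, with uniqueness carrying essentially all of the content. Existence is definitional: by the very construction of the equilibrium I/O relation $\mathcal{K}_{\Sigma}$, membership $(\bar{u},\bar{y}) \in \mathcal{K}_{\Sigma}$ guarantees at least one $\bar{x} \in \mathcal{E}_{\Sigma}$ with $\bar{u} = k_u(\bar{x})$ and $\bar{y} = k_y(\bar{x})$, i.e.\ solving \eqref{Eq:InputOutputMappings}. I would therefore devote the argument to showing that no two distinct equilibria can generate the same I/O pair, which is precisely where equilibrium-independent observability enters.

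For uniqueness, I would suppose that both $\bar{x} \in \mathcal{E}_{\Sigma}$ and $\tilde{x} \in \mathcal{E}_{\Sigma}$ satisfy \eqref{Eq:InputOutputMappings} with the \emph{same} pair $(\bar{u},\bar{y})$, so that $k_u(\bar{x}) = k_u(\tilde{x}) = \bar{u}$ and $k_y(\bar{x}) = k_y(\tilde{x}) = \bar{y}$, and then derive $\tilde{x} = \bar{x}$. The first observation is that, because $\tilde{x} \in \mathcal{E}_{\Sigma}$ with equilibrium input $k_u(\tilde{x}) = \bar{u}$, it is genuinely an equilibrium of the forced system $\dot{x} = f(x) + G\bar{u}$, i.e.\ $f(\tilde{x}) + G\bar{u} = \vzeros[n]$; hence the constant map $x(t) = \tilde{x}$ is a bona fide trajectory of this forced system. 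The second observation is that, evaluating the forced output along this trajectory, \eqref{Eq:InputOutputMappings} gives $h(\tilde{x}) + J\bar{u} = k_y(\tilde{x}) = \bar{y}$, so that $x(t) = \tilde{x}$ remains for all $t$ inside the observation set $\setdef{x \in \mathcal{X}}{h(x) + J\bar{u} = \bar{y}}$ that appears in Definition \ref{Def:Obsv}.

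The decisive step is then to invoke equilibrium-independent observability with respect to the equilibrium $\bar{x}$, whose associated I/O data are exactly $\bar{u}$ and $\bar{y}$: Definition \ref{Def:Obsv} asserts that the \emph{only} trajectory of $\dot{x} = f(x) + G\bar{u}$ that can remain inside $\setdef{x \in \mathcal{X}}{h(x) + J\bar{u} = \bar{y}}$ is the equilibrium trajectory $x(t) = \bar{x}$. Since $x(t) = \tilde{x}$ is such a trajectory, we conclude $\tilde{x} = \bar{x}$, which proves uniqueness. The argument is short once the correct object --- the constant trajectory at the putative second equilibrium --- is identified, and I expect the only (mild) obstacle to be bookkeeping: one must check carefully that $\tilde{x}$ yields a genuine constant trajectory of the \emph{correctly} forced system $\dot{x} = f(x)+G\bar{u}$ and that this trajectory lies in the observation level set, so that Definition \ref{Def:Obsv} applies verbatim. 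Notably, no growth, convexity, or dissipativity hypotheses are required; the result follows directly from the observability definition.
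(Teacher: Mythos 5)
Your proof is correct and follows essentially the same route as the paper's: both arguments observe that a second equilibrium $\tilde{x}$ with the same I/O pair yields a constant trajectory of $\dot{x} = f(x) + G\bar{u}$ remaining in the set $\{x \in \mathcal{X} : h(x)+J\bar{u} = \bar{y}\}$, which equilibrium-independent observability forces to coincide with $x(t) = \bar{x}$. The only difference is presentational — the paper phrases it as a contradiction while you conclude $\tilde{x} = \bar{x}$ directly — and your extra care in verifying that $\tilde{x}$ genuinely is an equilibrium of the correctly forced system is a welcome bit of bookkeeping.
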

\begin{proof}.
Let $(\bar{u},\bar{y}) \in \mathcal{K}_{\Sigma}$ be an equilibrium I/O pair and suppose that $\bar{x}$ and $\bar{x}^\prime$ are distinct points both satisfying \eqref{Eq:InputOutputMappings}. Then $x(t) = \bar{x}$ and $x(t) = \bar{x}^\prime$ are both trajectories of $\dot{x} = f(x) + G\bar{u}$, and both remain within the set $\setdef{x \in \mathcal{X}}{h(x)+J\bar{u}=\bar{y}}$, which contradicts equilibrium-independent observability. 
\end{proof}
}

%
%
%
%
%

\smallskip

\begin{lemma}{\bf (Internal Stability of EID Systems):}\label{Lem:InternalStability}
Suppose that a system $\Sigma$ satisfies the conditions of Lemma \ref{Lem:HillMoylanIncremental} with $V(x)$ strictly convex and $Q \prec \vzeros[]$. If $\Sigma$ is equilibrium-independent observable, then for every $\bar{x} \in \mathcal{E}_{\Sigma}$, $x = \bar{x}$ is a locally asymptotically stable equilibrium of the associated forced system $\dot{x} = f(x) + G\bar{u}$. Moreover, if $V(x)$ in Lemma \ref{Lem:HillMoylanIncremental} is strongly convex, then $\bar{x}$ is globally asymptotically stable.
\end{lemma}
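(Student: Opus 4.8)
The plan is to use the Bregman-divergence storage function $V_{\bar{x}}$ from \eqref{Eq:Bregman} directly as a Lyapunov function for the forced system $\dot{x} = f(x) + G\bar{u}$, and then close the argument with LaSalle's invariance principle, where equilibrium-independent observability plays exactly the role that zero-state observability plays in the classical dissipativity result. First I would record the two properties of $V_{\bar{x}}$ already in hand: by strict convexity of $V$ (via Lemma~\ref{Lem:Bregman}) we have $V_{\bar{x}}(\bar{x}) = 0$ and $V_{\bar{x}}(x) > 0$ for all $x \neq \bar{x}$, so $V_{\bar{x}}$ is locally positive definite about $\bar{x}$. Next I would specialize the EID dissipation inequality \eqref{Eq:EID} to the constant input $u \equiv \bar{u}$. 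Since then $u - \bar{u} = \vzeros[m]$, the quadratic supply rate \eqref{Eq:SupplyNormal} collapses to its $Q$-block, and using $y - \bar{y} = h(x) - h(\bar{x})$ one obtains
\[
\dot{V}_{\bar{x}} \leq [h(x)-h(\bar{x})]^{\sf T}Q[h(x)-h(\bar{x})] \leq 0,
\]
the final inequality because $Q \prec \vzeros[]$. This already yields Lyapunov stability of $\bar{x}$.

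For local asymptotic stability I would invoke LaSalle on a compact positively invariant sublevel set. Because $V_{\bar{x}}$ is continuous with a strict minimum at $\bar{x}$, on a small sphere $\|x-\bar{x}\|_2 = r$ we have $V_{\bar{x}} \geq m > 0$ by compactness, so for $c < m$ the connected component $\Omega_c$ of $\setdef{x}{V_{\bar{x}}(x) \leq c}$ containing $\bar{x}$ is compact and positively invariant (the dissipation rate above being nonpositive). By LaSalle, every trajectory starting in $\Omega_c$ converges to the largest invariant set contained in $\setdef{x \in \Omega_c}{\dot{V}_{\bar{x}}(x) = 0}$. Since $Q \prec \vzeros[]$, the condition $\dot{V}_{\bar{x}} = 0$ forces $h(x) = h(\bar{x})$, i.e. $y = \bar{y}$; equilibrium-independent observability (Definition~\ref{Def:Obsv}) then guarantees that the only trajectory of $\dot{x} = f(x) + G\bar{u}$ confined to the set $\setdef{x}{h(x)+J\bar{u} = \bar{y}}$ is $x(t) \equiv \bar{x}$. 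Hence the invariant set reduces to $\{\bar{x}\}$, and $\bar{x}$ is locally asymptotically stable.

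Finally, for the global statement I would exploit strong convexity to restore radial unboundedness. If $V$ is $\mu$-strongly convex, then directly from the defining inequality the Bregman divergence satisfies
\[
V_{\bar{x}}(x) \geq \frac{\mu}{2}\|x-\bar{x}\|_2^2,
\]
so $V_{\bar{x}}$ is radially unbounded and every sublevel set is compact. LaSalle then applies on arbitrarily large sublevel sets, the same observability argument identifies $\{\bar{x}\}$ as the only invariant set inside $\setdef{x}{\dot{V}_{\bar{x}}(x)=0}$, and global asymptotic stability of $\bar{x}$ follows. I expect the only delicate point to be the local step: strict convexity does \emph{not} by itself make $V_{\bar{x}}$ radially unbounded, so the compact positively invariant set must be constructed by hand from the strict local minimum rather than taken as a global sublevel set, whereas the global case is clean precisely because strong convexity supplies the quadratic lower bound and hence radial unboundedness.
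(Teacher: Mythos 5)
Your proof is correct and follows essentially the same route as the paper's: the Bregman storage function $V_{\bar{x}}$ from \eqref{Eq:Bregman} serves as the Lyapunov function, the EID inequality specialized to $u \equiv \bar{u}$ with $Q \prec \vzeros[]$ gives $\dot{V}_{\bar{x}} \leq 0$ with equality forcing $h(x) = h(\bar{x})$, and equilibrium-independent observability plus strong convexity (for radial unboundedness) close the local and global cases respectively. The only difference is presentational: where the paper appeals to ``standard arguments'' for output convergence followed by observability, you instantiate that step explicitly via LaSalle's invariance principle on a hand-constructed compact positively invariant sublevel set --- a valid, and in fact more self-contained, rendering of the same argument, whose delicate point (strict convexity alone does not give radial unboundedness, so the local compact set must be built from the strict minimum) you correctly identify.
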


\smallskip

\begin{proof}.
Fix an arbitrary $\bar{x} \in \mathcal{E}_{\Sigma}$ with associated $\bar{u}$ and $\bar{y}$, and let $V_{\bar{x}}(x)$ be as in Lemma \ref{Lem:HillMoylanIncremental}. Since $V_{\bar{x}}(\bar{x}) = 0$ and $V_{\bar{x}}(x) > 0$ for $x \neq \bar{x}$ (Lemma \ref{Lem:Bregman}(i)), $V_{\bar{x}}(x)$ is a Lyapunov candidate and satisfies the dissipation inequality
$$
\dot{V}_{\bar{x}}(x(t)) \leq (y(t)-\bar{y})^{\sf T}Q(y(t)-\bar{y})
$$
along trajectories of the forced system $\dot{x} = f(x) + G\bar{u}$. Since $Q \prec \vzeros[]$, there exists an $\alpha > 0$ such that $\dot{V}_{\bar{x}}(x(t)) \leq -\alpha \|y(t)-\bar{y}\|_2^2$. Standard arguments then show that $y(t) = h(x(t)) + J\bar{u}$ converges to $\bar{y}$ which, due to equilibrium-independent observability, implies that $x(t) \rightarrow \bar{x}$ showing local asymptotic stability. When $V(x)$ is $\mu$-strongly convex, Lemma \ref{Lem:Bregman}(ii) shows that $V_{\bar{x}}(x) \geq \frac{\mu}{2}\|x-\bar{x}\|_2^2$, and thus $V_{\bar{x}}(x)$ is radially unbounded; standard results (e.g., \cite[Corollary 2.2]{RS-MJ-PK:97}) then yield global asymptotic stability of $\bar{x}$.
\end{proof}

\smallskip

Variations on this result are possible, for example, by weakening the observability requirement to an appropriate notion of equilibrium-independent detectability; we omit the details.


\subsection{Interconnection and Feedback Stability}
\label{Sec:FeedbackStability}

Consider now two control-affine systems
\begin{equation*}
\Sigma_i:\,\begin{cases}
\begin{aligned}
\dot{x}_i &= f_i(x_i) + G_iu_i\\
y_i &= h_i(x_i) + J_iu_i
\end{aligned}
\end{cases}
\end{equation*}
$i \in \{1,2\}$, with compatible input/output {\tb spaces $\mathcal{U}_1 = \mathcal{Y}_2 = \real^m$ and $\mathcal{Y}_1 = \mathcal{U}_2 = \real^p$}, subject to the negative feedback interconnection of Figure \ref{Fig:Feedback}:
$$
u_1 = v_1 - y_2\,, \quad u_2 = v_2 + y_1\,.
$$
As is standard, we assume that $I_p + J_1J_2$ is nonsingular, which ensures that the feedback interconnection is well-posed. Given constant input vectors $\bar{v}_1, \bar{v}_2$, the conditions for a forced equilibrium of the closed-loop system are that 
\begin{equation}\label{Eq:ClosedLoopEquilibriumInclusions}
\begin{aligned}
\bar{y}_1 &\in \mathcal{K}_{\Sigma_1}(\bar{v}_1-\bar{y}_2)\\
\bar{y}_2 &\in \mathcal{K}_{\Sigma_2}(\bar{v}_2+\bar{y}_1)\,.
\end{aligned}
\end{equation} 
Lemma \ref{Lem:Intersection} in the appendix presents sufficient conditions for square EID systems which guarantee that for any pair of constant inputs $(\bar{v}_1,\bar{v}_2)$, the simultaneous inclusions \eqref{Eq:ClosedLoopEquilibriumInclusions} are uniquely solvable for corresponding outputs $(\bar{y}_1,\bar{y}_2)$. We will make explicit use of this lemma in Section \ref{Sec:Absolute}. For now, we simply assume that \eqref{Eq:ClosedLoopEquilibriumInclusions} yields a well-defined equilibrium input-output relation {\tb $\mathcal{K}_{\Sigma_{\rm cl}} \subset (\mathcal{U}_1\times\mathcal{U}_2) \times (\mathcal{Y}_1\times\mathcal{Y}_2)$} for the closed-loop system.

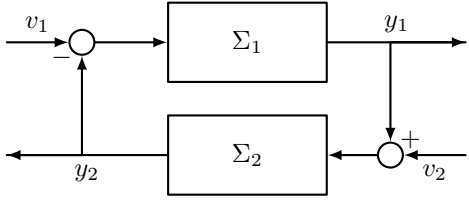
\begin{figure}[ht!]
\begin{center}
\tikzstyle{block} = [draw, fill=white, rectangle, 
    minimum height=3em, minimum width=6em]
    \tikzstyle{dzblock} = [draw, fill=white, rectangle, minimum height=3em, minimum width=4em,
path picture = {
\draw[thin, black] ([yshift=-0.1cm]path picture bounding box.north) -- ([yshift=0.1cm]path picture bounding box.south);
\draw[thin, black] ([xshift=-0.1cm]path picture bounding box.east) -- ([xshift=0.1cm]path picture bounding box.west);
\draw[very thick, black] ([xshift=-0.5cm]path picture bounding box.east) -- ([xshift=0.5cm]path picture bounding box.west);
\draw[very thick, black] ([xshift=-0.5cm]path picture bounding box.east) -- ([xshift=-0.1cm, yshift=+0.4cm]path picture bounding box.east);
\draw[very thick, black] ([xshift=+0.5cm]path picture bounding box.west) -- ([xshift=+0.1cm, yshift=-0.4cm]path picture bounding box.west);
}]
\tikzstyle{sum} = [draw, fill=white, circle, node distance=1cm]
\tikzstyle{input} = [coordinate]
\tikzstyle{output} = [coordinate]
\tikzstyle{pinstyle} = [pin edge={to-,thin,black}]

\begin{tikzpicture}[auto, scale = 0.6, node distance=2cm,>=latex', every node/.style={scale=1}]
    \node [input, name=input] {};
    \node [sum, right of=input] (sum) {};
    \node [block, right of=sum, node distance=2.2cm] (system) {$\Sigma_1$};
    \node [block, below of=system, node distance=1.5cm] (nonlin) {$\Sigma_2$};
    \node [output, right of=system, node distance=2.9cm] (output) {};
    \node [output, left of=nonlin, node distance=3.2cm] (output2) {};
    \node [input, name=input2, right of=nonlin, node distance=2.9cm] {};
    \node [sum, left of=input2] (sum2) {};

    \draw [thick, -latex] (input) -- node {$v_1$} (sum);
    \draw [thick, -latex] (input2) -- node {$v_2$} (sum2);
    \draw [thick, -latex] (sum2) -- (nonlin);
    \draw [thick, -latex] (sum) -- (system);
    \draw [thick, -latex] (system) -- node [name=y] {$y_1$}(output);
    \draw [thick, -latex] (output) -| node[pos=0.99] {$+$} 
        node [near end] {} (sum2);
    \draw [thick, -latex] (nonlin) -- node [name=y2] {$y_2$} (output2);
    \draw [thick, -latex] (output2) -| node[pos=0.99] {$-$} 
        node [near end] {} (sum);
\end{tikzpicture}
\end{center}
\caption{Feedback interconnection of two EID systems.}
\label{Fig:Feedback}
\end{figure}

\smallskip

\begin{theorem}{\bf (Dissipativity and Stability of EID Feedback Systems):}\label{Thm:EIDFeedback} Consider the feedback interconnection of Figure \ref{Fig:Feedback}. Suppose that
\begin{itemize}
\item $\Sigma_1$ and $\Sigma_2$ satisfy the conditions of Lemma \ref{Lem:HillMoylanIncremental} with convex functions $V_{i}(x_i)$ and supply parameters $(Q_i,S_i,R_i)$ for $i \in \{1,2\}$;
\item for every constant $(\bar{v}_1, \bar{v}_2)$, the inclusions \eqref{Eq:ClosedLoopEquilibriumInclusions} possess a unique solution $(\bar{y}_1,\bar{y}_2)$, with $(\bar{x}_1,\bar{x}_2) \in \mathcal{E}_{\Sigma_{\rm cl}} :=  \mathcal{E}_{\Sigma_1} \times \mathcal{E}_{\Sigma_2}$ being a corresponding closed-loop equilibrium point.
\end{itemize}
Then for any $\kappa > 0$, the closed-loop system with inputs $(v_1,v_2)$ and outputs $(y_1,y_2)$ is EID with supply parameters
$$
\begin{aligned}
Q_{\rm cl} &= \begin{bmatrix}
Q_1 + \kappa R_2 & -S_1 + \kappa S_2^{\sf T}\\
-S_1^{\sf T} + \kappa S_2 & R_1 + \kappa Q_2
\end{bmatrix}\\
S_{\rm cl} &= \begin{bmatrix}
S_1 & R_2\\
R_1 & S_2
\end{bmatrix}\,, \qquad R_{\rm cl} = \begin{bmatrix}
R_1 & \vzeros[]\\
\vzeros[] & R_2
\end{bmatrix}\,.
\end{aligned}
$$
and storage function $V_{\bar{x}}(x) = V_{1,\bar{x}_1}(x_1) + \kappa V_{2,\bar{x}_2}(x_2)$. Moreover, if 
\begin{itemize}
\item $V_1(x_1)$ and $V_2(x_2)$ are strictly convex,
\item $\Sigma_1$ and $\Sigma_2$ are equilibrium-independent observable, and
\item there exists $\kappa > 0$ such that $Q_{\rm cl} \prec \vzeros[]$, 
\end{itemize}
then $\bar{x} = (\bar{x}_1,\bar{x}_2) \in \mathcal{E}_{\Sigma_{\rm cl}}$ with associated constant inputs $(\bar{v}_1,\bar{v}_2) = k_u(\bar{x})$ is the unique closed-loop equilibrium point for the constant inputs $(v_1,v_2) = (\bar{v}_1,\bar{v}_2)$ and is locally asymptotically stable. If the respective functions $V_1(x_1)$ and $V_2(x_2)$ from Lemma \ref{Lem:HillMoylanIncremental} are strongly convex, then the previous statement is strengthened to global asymptotic stability of $\bar{x}$.
\end{theorem}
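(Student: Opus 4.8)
The plan is to prove the two assertions in turn, treating the dissipativity claim as the foundation and then layering the stability argument on top of it. For the dissipativity part, I would invoke the EID property of each subsystem (via Lemma \ref{Lem:HillMoylanIncremental}) to obtain, for the equilibria $\bar{x}_1, \bar{x}_2$ associated with the constant inputs $\bar{v}_1, \bar{v}_2$, the two dissipation inequalities $\dot{V}_{1,\bar{x}_1} \le {\sf w}_1(u_1 - \bar{u}_1, y_1 - \bar{y}_1)$ and $\dot{V}_{2,\bar{x}_2} \le {\sf w}_2(u_2 - \bar{u}_2, y_2 - \bar{y}_2)$. Taking the composite storage $V_{\bar{x}} = V_{1,\bar{x}_1} + \kappa V_{2,\bar{x}_2}$ with $\kappa > 0$ and adding the inequalities with weight $\kappa$ on the second gives $\dot{V}_{\bar{x}} \le {\sf w}_1 + \kappa\,{\sf w}_2$. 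The core computation is then to substitute the incremental feedback relations $u_1 - \bar{u}_1 = (v_1 - \bar{v}_1) - (y_2 - \bar{y}_2)$ and $u_2 - \bar{u}_2 = (v_2 - \bar{v}_2) + (y_1 - \bar{y}_1)$ into the right-hand side and collect the resulting quadratic form in the closed-loop increments $(y_1 - \bar{y}_1,\, y_2 - \bar{y}_2,\, v_1 - \bar{v}_1,\, v_2 - \bar{v}_2)$; matching blocks identifies the supply parameters $(Q_{\rm cl}, S_{\rm cl}, R_{\rm cl})$ and establishes closed-loop EID. This step is routine but bookkeeping-heavy, and I would organize it by first recording each ${\sf w}_i$ in expanded form and then reading off the output-output, output-input, and input-input blocks in turn.

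For the stability part, I would first note that since $V_1$ and $V_2$ are strictly convex, Lemma \ref{Lem:Bregman} gives $V_{i,\bar{x}_i}(\bar{x}_i) = 0$ and $V_{i,\bar{x}_i}(x_i) > 0$ otherwise, so $V_{\bar{x}}$ is a bona fide Lyapunov candidate for the closed loop. Fixing the inputs at $(v_1, v_2) = (\bar{v}_1, \bar{v}_2)$ removes the $v$-dependence in the closed-loop supply rate, so the dissipation inequality collapses to $\dot{V}_{\bar{x}} \le (y - \bar{y})^{\sf T} Q_{\rm cl} (y - \bar{y}) \le -\alpha \|y - \bar{y}\|_2^2$ for some $\alpha > 0$, using $Q_{\rm cl} \prec \vzeros[]$. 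This already yields stability; to upgrade to asymptotic stability I would run a LaSalle invariance argument on the now-autonomous closed loop, whose invariant set where $\dot{V}_{\bar{x}} = 0$ forces $y_1 \equiv \bar{y}_1$ and $y_2 \equiv \bar{y}_2$.

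The crux, and the step I expect to require the most care, is converting this output-constancy into state convergence using equilibrium-independent observability of the individual subsystems. On the invariant set the feedback relations give $u_1 = \bar{v}_1 - y_2 \equiv \bar{u}_1$ and $u_2 = \bar{v}_2 + y_1 \equiv \bar{u}_2$, so each subsystem is driven by its own constant equilibrium input while its output is pinned at $\bar{y}_i$; Definition \ref{Def:Obsv} then forces $x_i(t) \equiv \bar{x}_i$, so the largest invariant set is the singleton $\{\bar{x}\}$ and $x(t) \to \bar{x}$ locally. Equivalently, this verifies that the closed loop is itself equilibrium-independent observable, after which the conclusion follows by directly invoking Lemma \ref{Lem:InternalStability} applied to the closed loop as an EID system with the (strictly convex) composite generator $V_1 + \kappa V_2$. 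Uniqueness of $\bar{x}$ follows by combining the assumed uniqueness of $(\bar{y}_1, \bar{y}_2)$ with Proposition \ref{Prop:UniqueEq} applied to each subsystem.

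Finally, when $V_1$ and $V_2$ are strongly convex, Lemma \ref{Lem:Bregman}(ii) supplies a quadratic lower bound on each $V_{i,\bar{x}_i}$, rendering $V_{\bar{x}}$ radially unbounded and promoting the conclusion to global asymptotic stability via the standard radial-unboundedness argument, exactly as in the proof of Lemma \ref{Lem:InternalStability}.
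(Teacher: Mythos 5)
Your proposal is correct and takes essentially the same route as the paper's proof: form the composite storage function $V_{\bar{x}}(x) = V_{1,\bar{x}_1}(x_1) + \kappa V_{2,\bar{x}_2}(x_2)$, substitute the incremental feedback relations into the summed dissipation inequalities to read off $(Q_{\rm cl},S_{\rm cl},R_{\rm cl})$, and then apply Lemma \ref{Lem:InternalStability} to the closed loop. If anything, you are slightly more complete than the paper, since you explicitly supply the LaSalle/observability bridge (constant closed-loop outputs force $u_i \equiv \bar{u}_i$, so Definition \ref{Def:Obsv} applied to each subsystem yields closed-loop equilibrium-independent observability), a step the paper leaves implicit in its appeal to Lemma \ref{Lem:InternalStability}.
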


\smallskip

\begin{proof}.
A simple calculation shows the closed-loop system $\Sigma_{\rm cl}$ may be written as $\dot{x} = f(x) + Gv$, $y = h(x) + Jv$, where
$$
\begin{aligned}
f &= \begin{bmatrix}
f_1(x_1) - G_1(h_2(x_2)+J_2h_1(x_1))\\
f_2(x_2) + G_2(h_1(x_1)+J_1h_2(x_2))
\end{bmatrix}, \quad J = \begin{bmatrix}
J_1 & \vzeros[]\\
\vzeros[] & J_2
\end{bmatrix}\\
G &= \begin{bmatrix}
G_1 & -G_1J_2 \\ 
G_2J_1 & G_2
\end{bmatrix}, \quad h = 
\begin{bmatrix}
I_p  & -J_1\\ J_2 & I_m
\end{bmatrix}\begin{bmatrix}
h_1(x_1)\\h_2(x_2)
\end{bmatrix}
\end{aligned}
$$
If $G_1^{\perp}$ and $G_2^{\perp}$ are the full-rank left annihilators of $G_1$ and $G_2$, then $G^{\perp} = \mathrm{blkdiag}(G_1^{\perp},G_2^{\perp})$ serves as a full-rank left annihilator for $G$, and 
$$
\begin{aligned}
\mathcal{E}_{\Sigma_{\rm cl}} &= \setdef{x=(x_1,x_2)}{G^{\perp}f(x) = \vzeros[n_1+n_2]}\\
&= \setdef{x}{G_1^{\perp}f_1(x_1) = \vzeros[n_1]\,\,\text{and}\,\,\,G_2^{\perp}f_2(x_2) = \vzeros[n_2]}\\
&= \mathcal{E}_{\Sigma_1} \times \mathcal{E}_{\Sigma_2}\,.
\end{aligned}
$$
For any $\bar{x} \in \mathcal{E}_{\Sigma_{\rm cl}}$, taking $V_{\bar{x}}(x) = V_{1,\bar{x}_1}(x_1) + \kappa V_{2,\bar{x}_2}(x_2)$ and differentiating leads immediately to an EID dissipation inequality $\dot{V}_{\bar{x}}(x) \leq {\sf w}(v-\bar{v},y-\bar{y})$ with parameters $(Q_{\rm cl},S_{\rm cl},R_{\rm cl})$ as given. The final statement on stability follows by applying Lemma \ref{Lem:InternalStability} to the closed-loop system. 
\end{proof}

\subsection{Equilibrium-Independent Absolute Stability}
\label{Sec:Absolute}

We now consider the feedback system shown in Figure \ref{Fig:Absolute}, consisting of a square ($\mathcal{U}=\mathcal{Y}=\real^m$) system $\Sigma$ in feedback with a static nonlinear element $\map{\psi}{\real^m}{\real^m}$; we assume $\psi$ is sufficiently smooth to ensure well-defined closed-loop trajectories.

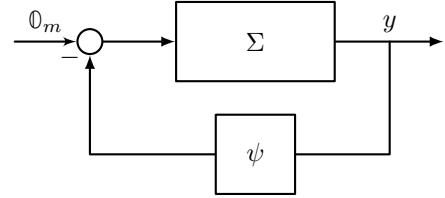
\begin{figure}[ht!]
\begin{center}
\tikzstyle{block} = [draw, fill=white, rectangle, 
    minimum height=3em, minimum width=6em]
    \tikzstyle{hold} = [draw, fill=white, rectangle, 
    minimum height=3em, minimum width=3em]
    \tikzstyle{dzblock} = [draw, fill=white, rectangle, minimum height=3em, minimum width=4em,
path picture = {
\draw[thin, black] ([yshift=-0.1cm]path picture bounding box.north) -- ([yshift=0.1cm]path picture bounding box.south);
\draw[thin, black] ([xshift=-0.1cm]path picture bounding box.east) -- ([xshift=0.1cm]path picture bounding box.west);
\draw[very thick, black] ([xshift=-0.5cm]path picture bounding box.east) -- ([xshift=0.5cm]path picture bounding box.west);
\draw[very thick, black] ([xshift=-0.5cm]path picture bounding box.east) -- ([xshift=-0.1cm, yshift=+0.4cm]path picture bounding box.east);
\draw[very thick, black] ([xshift=+0.5cm]path picture bounding box.west) -- ([xshift=+0.1cm, yshift=-0.4cm]path picture bounding box.west);
}]
\tikzstyle{sum} = [draw, fill=white, circle, node distance=1cm]
\tikzstyle{input} = [coordinate]
\tikzstyle{output} = [coordinate]
\tikzstyle{pinstyle} = [pin edge={to-,thin,black}]

\begin{tikzpicture}[auto, scale = 0.6, node distance=2cm,>=latex', every node/.style={scale=1}]
    \node [input, name=input] {};
    \node [sum, right of=input] (sum) {};
    \node [block, right of=sum, node distance=2.2cm] (system) {$\Sigma$};
    \node [hold, below of=system, node distance=1.5cm] (nonlin)  {$\psi$};
    \node [output, right of=system, node distance=2.5cm] (output) {};

    \draw [draw,->] (input) -- node {$\vzeros[m]$} (sum);
    \draw [thick, -latex] (sum) -- (system);
    \draw [thick, -latex] (system) -- node [name=y] {$y$}(output);
    \draw [thick, -latex] (y) |- (nonlin) -| node[pos=0.99] {$-$} 
        node [near end] {} (sum);
\end{tikzpicture}
\end{center}
\caption{System with static feedback nonlinearity.}
\label{Fig:Absolute}
\end{figure}

Classically, the absolute stability problem is to determine conditions under which the feedback system in Figure \ref{Fig:Absolute} is internally stable for all memoryless nonlinearities $\psi$ satisfying a sector condition. Crucially, in the standard formulation, $\Sigma$ is assumed to have an equilibrium point at the origin, and $\psi$ is assumed to satisfy $\psi(\vzeros[m]) = \vzeros[m]$; these assumptions ensure that the feedback interconnection has an unforced equilibrium point at the origin.\footnote{Typically $\Sigma$ is further assumed to be an LTI system.} The development of equilibrium-independent dissipativity allows us to consider a sensible variant on this problem, where rather than being \emph{assumed}, the existence of a closed-loop equilibrium point is \emph{inferred} from the EID properties of the subsystems. For simplicity of exposition, we assume that $J = \vzeros[]$ ($\Sigma$ has no feedthrough).

\medskip

\begin{theorem}{\bf (Equilibrium-Independent Circle Criterion):}\label{Thm:Absolute}
Consider the feedback system in Figure \ref{Fig:Absolute}, where $\Sigma$ is square ($m=p$) and is equilibrium-independent observable. Assume that
\begin{enumerate}
\item[(i)] the nonlinearity $\map{\psi}{\real^m}{\real^m}$ satisfies the incremental dissipation inequality \eqref{Eq:EIDNonlinearity}, with parameters\footnote{Equivalently, $\psi$ satisfies the incremental sector condition
$$
[\psi(z_2)-\psi(z_1) - K_1(z_2-z_1)]^{\sf T}[\psi(z_2)-\psi(z_1) - K_2(z_2-z_1)] \leq 0\,.
$$}
\begin{equation}\label{Eq:PsiDiss}
(Q_{\psi},S_{\psi},R_{\psi}) = \left(-I_m, \frac{K_1+K_2}{2}, -K_1K_2\right)\,,
\end{equation}
{\tb where $K_1, K_2$ are diagonal and $K = K_2 - K_1 \succ \vzeros[]$};
\item[(ii)] the system
\begin{equation}\label{Eq:NonlinearSystemLoop}
\Sigma^\prime:\,\begin{cases}
\begin{aligned}
\dot{x} &= f(x) - GK_1h(x) + Gu_{\ell}\\
y_{\ell} &= K h(x) + u_{\ell}
\end{aligned}
\end{cases}
\end{equation}
is EID, satisfying Lemma \ref{Lem:HillMoylanIncremental} with $V(x)$ strictly convex and supply rate \eqref{Eq:SupplyNormal}, with parameters
\begin{equation}\label{Eq:SigmaPrimeSupplyParameters}
(Q_{\Sigma^\prime},S_{\Sigma^\prime},R_{\Sigma^\prime}) = \left(-\varepsilon I_m,\frac{1}{2}I_m,\vzeros[]\right)
\end{equation}
for some $\varepsilon > 0$\,.
\end{enumerate}
Then the closed-loop system {\tb possesses a unique and locally asymptotically stable equilibrium point. If $V(x)$ is strongly convex, then the equilibrium is globally asymptotically stable.}
\end{theorem}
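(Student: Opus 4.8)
The plan is to recast Figure \ref{Fig:Absolute} as the standard negative-feedback interconnection of Figure \ref{Fig:Feedback} by a circle-criterion loop transformation, and then to settle the two assertions separately: existence/uniqueness of the closed-loop equilibrium by monotone-operator theory, and asymptotic stability by an invariance argument. Concretely, I would first show that the closed loop $\dot{x} = f(x) - G\psi(h(x))$ of Figure \ref{Fig:Absolute} is identical to the negative feedback of $\Sigma^\prime$ in \eqref{Eq:NonlinearSystemLoop} with a transformed static nonlinearity $\hat{\psi}$. Setting $u_\ell = K_1 h(x) - \psi(h(x))$ makes the $\Sigma^\prime$ dynamics coincide with the original closed loop, whence $y_\ell = K h(x) + u_\ell = K_2 h(x) - \psi(h(x))$. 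Writing $\Delta$ for the difference of a quantity at two points and parameterizing by $h(x) \in \real^m$, a one-line expansion shows that the incremental sector condition \eqref{Eq:PsiDiss}, i.e. $[\Delta\psi - K_1\Delta h]^{\sf T}[\Delta\psi - K_2\Delta h] \le 0$, is \emph{equivalent} to $[\Delta(-u_\ell)]^{\sf T}\Delta y_\ell \ge 0$; hence the relation $\hat{\psi}\colon y_\ell \mapsto -u_\ell$ is monotone and satisfies \eqref{Eq:EIDNonlinearity} with $(Q,S,R) = (\vzeros[],\frac{1}{2}I_m,\vzeros[])$. With $\Sigma^\prime$ EID and output-strictly-passive by \eqref{Eq:SigmaPrimeSupplyParameters}, we land exactly in the setting of Section \ref{Sec:FeedbackStability} with $\Sigma_1 = \Sigma^\prime$, $\Sigma_2 = \hat\psi$, and zero external input.

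\emph{Equilibrium existence and uniqueness.} The closed-loop equilibria solve the inclusions \eqref{Eq:ClosedLoopEquilibriumInclusions}. By Lemma \ref{Lem:IOMappings} applied to \eqref{Eq:SigmaPrimeSupplyParameters}, any two equilibrium pairs of $\Sigma^\prime$ satisfy $\Delta u_\ell^{\sf T}\Delta y_\ell \ge \varepsilon\|\Delta y_\ell\|_2^2$, so $\mathcal{K}_{\Sigma^\prime}^{-1}$ is $\varepsilon$-strongly monotone, and it is maximally monotone by Lemma \ref{Lem:MaxMonotone}; the nonlinearity $\hat\psi$ is monotone and likewise maximally monotone. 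This is precisely the hypothesis required by Lemma \ref{Lem:Intersection}, which certifies that the composite inclusion --- equivalently $\vzeros[m] \in \mathcal{K}_{\Sigma^\prime}^{-1}(\bar{y}_\ell) + \hat\psi(\bar{y}_\ell)$, the sum of a maximal monotone and a strongly monotone maximal monotone operator, hence surjective --- has a unique solution $(\bar{u}_\ell,\bar{y}_\ell)$. Unwinding the transformation yields a unique equilibrium I/O pair for $\Sigma$, and equilibrium-independent observability together with Proposition \ref{Prop:UniqueEq} promotes this to a unique state $\bar{x} \in \mathcal{E}_{\Sigma}$.

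\emph{Stability.} Applying Theorem \ref{Thm:EIDFeedback} gives closed-loop EID with storage $V_{\bar{x}}(x) = V_{1,\bar{x}_1}(x_1) + \kappa V_{2,\bar{x}_2}(x_2)$; since $\hat\psi$ is static, $V_{2} \equiv 0$. Choosing $\kappa = 1$ and zero external input reduces the supply to $\dot{V}_{\bar{x}} \le -\varepsilon\|y_\ell - \bar{y}_\ell\|_2^2$. Strict convexity of $V$ makes $V_{\bar{x}}$ a valid Lyapunov function (Lemma \ref{Lem:Bregman}), and on the set $\{\dot V_{\bar x} = 0\} = \{y_\ell = \bar{y}_\ell\}$ the constancy of $y_\ell$ forces $h(x)$ constant because $K \succ \vzeros[]$; since $K$ is invertible the output level sets of $\Sigma^\prime$ and $\Sigma$ coincide and their vector fields agree there, so equilibrium-independent observability of $\Sigma$ forces $x \equiv \bar{x}$. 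LaSalle on a compact sublevel set then gives local asymptotic stability, and $\mu$-strong convexity of $V$ yields $V_{\bar{x}}(x) \ge \frac{\mu}{2}\|x - \bar{x}\|_2^2$, hence radial unboundedness and global asymptotic stability.

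I expect two difficulties. The primary obstacle is the equilibrium step: the transformed nonlinearity $\hat\psi$ need not be single-valued, so its maximal monotonicity cannot be assumed and must be supplied by Lemma \ref{Lem:MaxMonotone}, and one must confirm that $\varepsilon > 0$ genuinely delivers the strong monotonicity that renders the sum operator surjective. The secondary, more technical point is that $\hat\psi$ contributes nothing to $Q_2$, so $Q_{\rm cl}$ is only negative \emph{semidefinite} and the hypothesis $Q_{\rm cl} \prec \vzeros[]$ of Theorem \ref{Thm:EIDFeedback} is unavailable; stability therefore cannot be read off that theorem directly and must be recovered through the observability/invariance refinement of Lemma \ref{Lem:InternalStability}, including the verification that observability transfers from $\Sigma$ to $\Sigma^\prime$.
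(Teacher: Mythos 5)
Your proposal follows the same route as the paper's proof: a circle-criterion loop transformation producing a monotone static nonlinearity in feedback with the output-strictly-passive EID system $\Sigma^\prime$; existence and uniqueness of the closed-loop equilibrium via maximal monotonicity, Lemma \ref{Lem:Intersection}, and Proposition \ref{Prop:UniqueEq}; and stability via the dissipation bound $\dot{V}_{\bar{x}} \leq -\varepsilon\|y_{\ell}-\bar{y}_{\ell}\|_2^2$ together with equilibrium-independent observability, rather than via the stability half of Theorem \ref{Thm:EIDFeedback}. Two of your choices are genuine improvements in economy. First, by setting $u_{\ell} = K_1h(x)-\psi(h(x))$ you identify the transformed loop and the original closed loop as literally the same dynamical system, which dispenses with the paper's separate equilibrium-equivalence computation (the paper solves \eqref{Eq:TransformedEquilibria2} through the inverse map $\gamma^{-1}$ and verifies $\psi = X$). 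Second, your observation that $Q_{\rm cl}$ is only negative \emph{semi}definite here, so that Theorem \ref{Thm:EIDFeedback} cannot deliver stability directly, is exactly right and is why the paper also argues stability by hand.

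The gap is at the point you yourself flagged, and your proposed repair does not close it. You correctly allow that $\hat\psi\colon y_{\ell} \mapsto -u_{\ell}$ may be multivalued (e.g., if $\psi$ attains the sector boundary $K_2$, the map $h \mapsto K_2h-\psi(h)$ is not injective), but Lemma \ref{Lem:MaxMonotone} cannot then supply maximal monotonicity: all four of its conditions are phrased for the equilibrium I/O relation of a \emph{dynamical} system \eqref{Eq:NonlinearSystem} in terms of $f$ and $\mathcal{K}_u^{-1}$, and the underlying fact it invokes --- that continuous monotone \emph{functions} are maximally monotone --- requires precisely the single-valuedness you concede may fail. The paper resolves this by asserting, via the standard loop transformation, that $\psi^\prime$ is a continuous monotone function, and that assertion is also what licenses its invariance step; note that your stability argument needs the same thing, since ``constancy of $y_{\ell}$ forces $h(x)$ constant'' only if $u_{\ell} = -\hat\psi(y_{\ell})$ is pinned down, i.e., only if $\hat\psi$ is single-valued at $\bar{y}_{\ell}$. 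If you wish to keep the relational viewpoint, maximality can be patched cleanly by Minty's theorem: since $y_{\ell}+(-u_{\ell}) = (K_2h-\psi(h))+(\psi(h)-K_1h) = Kh$ and $K \succ \vzeros[]$, the range of $I_m+\hat\psi$ is all of $\real^m$, which together with monotonicity gives maximal monotonicity; but you would still owe an argument for single-valuedness (or a LaSalle-type refinement) in the stability step, which is exactly what the paper's continuity assertion on $\psi^\prime$ is doing.
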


\begin{proof}.
Through a standard loop transformation (see, e.g., \cite[Pg. 267]{HKK:02}), we may transform the feedback interconnection of Figure \ref{Fig:Absolute} to the feedback interconnection in Figure \ref{Fig:AbsoluteTransformed}. The new nonlinearity $\map{\psi^\prime}{\real^m}{\real^m}$ in the feedback path satisfies the incremental dissipation inequality \eqref{Eq:EIDNonlinearity} with parameters $(Q_{\psi^\prime},S_{\psi^\prime},R_{\psi^\prime}) = \left(\vzeros[], \frac{1}{2}I_m, \vzeros[]\right)$\,, 
i.e., $\psi^\prime$ is monotone \cite[Pg. 233]{HKK:02}. 

%

\begin{figure}[ht!]
\begin{center}
\tikzstyle{block} = [draw, fill=white, rectangle, 
    minimum height=3em, minimum width=6em]
    \tikzstyle{hold} = [draw, fill=white, rectangle, 
    minimum height=3em, minimum width=3em]
    \tikzstyle{hold2} = [draw, fill=white, rectangle, 
    minimum height=2em, minimum width=3em]
    \tikzstyle{dzblock} = [draw, fill=white, rectangle, minimum height=3em, minimum width=4em,
path picture = {
\draw[thin, black] ([yshift=-0.1cm]path picture bounding box.north) -- ([yshift=0.1cm]path picture bounding box.south);
\draw[thin, black] ([xshift=-0.1cm]path picture bounding box.east) -- ([xshift=0.1cm]path picture bounding box.west);
\draw[very thick, black] ([xshift=-0.5cm]path picture bounding box.east) -- ([xshift=0.5cm]path picture bounding box.west);
\draw[very thick, black] ([xshift=-0.5cm]path picture bounding box.east) -- ([xshift=-0.1cm, yshift=+0.4cm]path picture bounding box.east);
\draw[very thick, black] ([xshift=+0.5cm]path picture bounding box.west) -- ([xshift=+0.1cm, yshift=-0.4cm]path picture bounding box.west);
}]
\tikzstyle{sum} = [draw, fill=white, circle, node distance=1cm]
\tikzstyle{input} = [coordinate]
\tikzstyle{output} = [coordinate]
\tikzstyle{pinstyle} = [pin edge={to-,thin,black}]

\begin{tikzpicture}[auto, scale = 0.6, node distance=2cm,>=latex', every node/.style={scale=1}]
    \node [input, name=input] {};
    \node [sum, right of=input] (sum) {};
    \node [block, right of=sum, node distance=3.2cm] (system) {$\Sigma^\prime$};
    \node [hold2, below of=system, node distance=1.5cm, xshift=-0.8cm] (nonlin)  {$\psi$};
    \node [hold2, right of=nonlin, node distance=2cm] (kinv)  {$K^{-1}$};
    \node [sum, right of=kinv, node distance=1.3cm] (sum3)  {$$};
    \node [hold2, below of=nonlin, node distance=1.3cm] (k1)  {$K_1$};
    \node [sum, left of=nonlin, node distance=1.3cm] (sum2) {};
    \node [output, right of=system, node distance=3.8cm] (output) {};

    \draw [draw,->] (input) -- node {$\vzeros[m]$} (sum);
    \draw [thick, -latex] (sum) -- node {$u_{\ell}$} (system);
    \draw [thick, -latex] (system) -- node [name=y] {$y_{\ell}$}(output);
    \draw [thick, -latex] (sum2) -| node[pos=0.98] {$-$} (sum);
    \draw [thick, -latex] (k1) -| node[pos=0.98] {$-$} (sum2);
    \draw [thick, -latex] (nonlin) -- (sum2);
    \draw [thick, -latex] ([xshift=0.8cm]nonlin.east) |- (k1);
    \draw [thick, -latex] (kinv) -- (nonlin);
    \draw [thick, -latex] (sum3) -- (kinv);
    \coordinate [below of=k1, node distance=0.6cm] (tmp);
    \draw [thick, -latex] ([xshift=-0.5cm]sum2.west) |- (tmp) -| node[pos=0.98] {$+$} (sum3);
    \draw [thick, -latex] ([xshift=-0.7cm]output.west) |- (sum3);
     \draw[black, dotted] ([yshift=10mm,xshift=5mm]sum3.east)|-([yshift=-5mm]tmp.east)-|([yshift=0mm,xshift=-10mm]sum2.west)|-([yshift=10mm,xshift=5mm]sum3.east);
    \node[black] at (12.5,-5)  {$\psi^\prime$};
\end{tikzpicture}
\end{center}
\caption{Loop-transformed feedback system.}
\label{Fig:AbsoluteTransformed}
\end{figure}
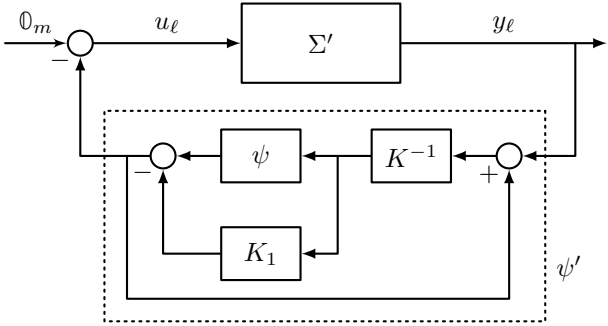

We first address the equivalence of equilibria between the two feedback loops, and the existence of an equilibrium point. Equilibria $\bar{x}$ of Figure \ref{Fig:Absolute} are determined by
\begin{equation}\label{Eq:MyFeedbackEquilibria}
\begin{aligned}
\vzeros[n] &= f(\bar{x}) - G\psi(\bar{y})\\
\bar{y} &= h(\bar{x})
\end{aligned} \,\, \Leftrightarrow \,\, \vzeros[n] = f(\bar{x}) - G\psi(h(\bar{x}))\\
\end{equation}
while equilibria $\tilde{x}$ of Figure \ref{Fig:AbsoluteTransformed} are determined by
\begin{subequations}\label{Eq:TransformedEquilibria}
\begin{align}\label{Eq:TransformedEquilibria1}
\vzeros[n] &= f(\tilde{x}) - GK_1h(\tilde{x}) -G\psi^\prime(\tilde{y}_{\ell})\\
\label{Eq:TransformedEquilibria2}
\tilde{y}_{\ell} &= Kh(\tilde{x}) - \psi^\prime(\tilde{y}_{\ell})
\end{align}
\end{subequations}
{\tb Since $\psi^\prime$ is continuous and monotone, the mapping $z \mapsto z + \psi^{\prime}(z)$ is continuous and $1$-strongly monotone, and therefore for every $b \in \real^{m}$ the equation $b = \gamma(z) := z + \psi^\prime(z)$ has a unique solution; we denote this solution by $z = \gamma^{-1}(b)$. It follows that \eqref{Eq:TransformedEquilibria2} may be uniquely solved for $\tilde{y}_{\ell} = \gamma^{-1}(Kh(\tilde{x}))$, and \eqref{Eq:TransformedEquilibria} is therefore equivalent to the single equation}
\begin{equation}\label{Eq:LTEquilibriumReduced}
\vzeros[n] = f(\tilde{x}) - GX(h(\tilde{x}))\,,
\end{equation}
where $X(h) := K_1h +\psi^\prime(\gamma^{-1}(Kh))$. {\tb Comparing \eqref{Eq:MyFeedbackEquilibria} and \eqref{Eq:LTEquilibriumReduced}, equivalence of equilibria will follow if $\psi = X$.} To show this, note from Figure \ref{Fig:AbsoluteTransformed} that $\psi^\prime$ is defined by
$$
\psi^\prime(z^\prime) = \psi(K^{-1}(z^\prime + \psi^\prime(z^\prime))) - K_1K^{-1}(z^\prime + \psi^\prime(z^\prime))\,.
$$
Substituting $\gamma(z^\prime) = z^\prime + \psi^\prime(z^\prime)$, we find that
$$
\psi^\prime(z^\prime) = \psi(K^{-1}\gamma(z^\prime)) - K_1K^{-1}\gamma(z^\prime)\,.
$$
Changing variables now to $h := \gamma^{-1}(Kz^\prime)$, this further simplifies to
$$
\psi^\prime(\gamma^{-1}(Kh)) = \psi(h) - K_1h
$$
from which it follows by comparison that $\psi(h) = X(h)$. {\tb Therefore, the equilibrium sets of the two feedback systems are equal.} To address existence and uniqueness of an equilibrium point, note that since $\psi^\prime$ is a continuous monotone function, $\mathcal{K}_{\psi^\prime} = \psi^\prime$ is maximally monotone (Lemma \ref{Lem:MaxMonotone}). Moreover, since $\Sigma^\prime$ is EID with supply rate parameters \eqref{Eq:SigmaPrimeSupplyParameters}, $\mathcal{K}_{\Sigma^\prime}$ is $\varepsilon$-cocoercive, and is therefore maximally monotone (Lemma \ref{Lem:MaxMonotone}). Applying Lemma \ref{Lem:Intersection} with $\mathcal{K}_{\Sigma_1} = \mathcal{K}_{\Sigma^\prime}$ and $\mathcal{K}_{\Sigma_2} = \mathcal{K}_{\psi^\prime}$, we conclude that the closed-loop system in Figure \ref{Fig:AbsoluteTransformed} possesses a unique equilibrium I/O pair $(\bar{u}_{\ell},\bar{y}_{\ell}) \in \mathcal{K}_{\Sigma^\prime}$ with $-\bar{u}_{\ell} = \psi^\prime(\bar{y}_{\ell})$. Therefore, by definition, there exists an associated equilibrium point $\bar{x} \in \mathcal{E}_{\Sigma^\prime}$, {\tb and this equilibrium point is unique by Proposition \ref{Prop:UniqueEq}. By the previous arguments on equivalence of equilibria between the systems, $\bar{x} \in \mathcal{E}_{\Sigma}$ as well.}

Using the EID storage function $V_{\bar{x}}(x)$, we compute that
$$
\begin{aligned}
\dot{V}_{\bar{x}} &\leq -\varepsilon \|y_{\ell}-\bar{y}_{\ell}\|_2^2 + (y_{\ell}-\bar{y}_{\ell})^{\sf T}(u_{\ell}-\bar{u}_{\ell})\\
&= -\varepsilon \|y_{\ell}-\bar{y}_{\ell}\|_2^2 - (y_{\ell}-\bar{y}_{\ell})^{\sf T}(\psi^\prime(y_{\ell})-\psi^{\prime}(\bar{y}_{\ell}))\\
&\leq -\varepsilon \|y_{\ell}-\bar{y}_{\ell}\|_2^2\,,
\end{aligned}
$$
where we have used that $\psi^\prime$ is monotone; the rest of the result follows from Lemma \ref{Lem:InternalStability}.
\end{proof}

\medskip

\begin{example}{\bf (SMIB Power System):}\label{Ex:SMIB}
Consider the single-machine infinite-bus (SMIB) power system model
$$
\begin{aligned}
\dot{\theta} &= \omega\,,\\
M\dot{\omega} &= P_{\rm m} - bV^2 \sin(\theta) - D\omega + u\,,\\
y &= \omega
\end{aligned}
$$
where $\theta \in \real$ is the rotor angle, $\omega \in \real$ is the generator frequency, $P_{\rm m} \in \real$ is the mechanical power, and $M, D, b, V > 0$; let $y = \omega$ be the output. {\tb By inspection,} the set of assignable equilibrium points is
$$
\mathcal{E}_{\Sigma} = \setdef{(\bar{\theta},\bar{\omega})}{\bar{\omega} = 0,\,\bar{\theta} \in \real}\,,
$$
with corresponding input $\bar{u} = k_u(\bar{\theta}) = bV^2\sin(\bar{\theta}) - P_{\rm m}$. {\tb For a fixed $\Gamma \in [0,\pi/2)$, we restrict our attention to equilibria in the set $\Theta(\Gamma) \times \{0\} \subset \mathcal{E}_{\Sigma}$ where $\Theta(\Gamma) = \setdef{\theta}{|\theta| \leq \Gamma}$; this ensures that the nonlinearity $\sin(\cdot)$ is strongly monotone in a neighbourhood of any equilibrium $\bar{\theta} \in \Theta(\Gamma)$. We further assume that $|P_{\rm m}| < bV^2\sin(\Gamma)$, which is {\tb necessary and sufficient} for the existence of an equilibrium $\bar{\theta} \in \Theta(\Gamma)$ when $u = 0$.}
Consider now the frequency feedback control $u = -\psi(\omega)$, where $\map{\psi}{\real}{\real}$ is incrementally in the sector $[\alpha,\beta]$ with $\alpha < \beta$.\footnote{In this particular case, the interconnection preserves the open-loop equilibrium point $(\bar{\theta},\bar{\omega}) \in \Delta(\Gamma) \times \{0\}$.} Following Theorem \ref{Thm:Absolute}, we examine the loop-transformed system \eqref{Eq:NonlinearSystemLoop}. Let $V(\theta,\omega) = \frac{1}{2}M\omega^2 + bV^2(1-\cos(\theta))$; {\tb this function is strongly convex in a neighbourhood of $(\bar{\theta},0) \in \Theta(\Gamma) \times \{0\}$.} Using \eqref{Eq:Bregman}, a simple computation shows that \eqref{Eq:NonlinearSystemLoop} is quadratically EID with parameters \eqref{Eq:SigmaPrimeSupplyParameters}, where $\varepsilon = (D+\alpha)/(2D + \beta + \alpha )$. {\tb It follows that the closed-loop equilibrium point $(\bar{\theta},0)$ is locally asymptotically stable} for $\alpha > -D$ and $\alpha < \beta < +\infty$. \hfill \oprocend
\end{example}

\section{Equilibrium-Independent Dissipativity for Discrete-Time Control-Affine Systems}
\label{Sec:DTEID}

In this section we consider discrete-time control-affine nonlinear systems {\tb with constant input and throughput matrices}
\begin{equation}\label{Eq:DTNonlinearSystem}
\Sigma:\,\begin{cases}
\begin{aligned}
x_{t+1} &= f(x_t) + Gu_t\\
y_t &= h(x_t) + Ju_t
\end{aligned}
\end{cases}
\end{equation}
{\tb where $t \in \mathbb{Z}_{\geq 0}$ is the time index.} {\tb Similarly to Section \ref{Sec:ControlAffine}, the set of assignable equilibrium points for \eqref{Eq:DTNonlinearSystem} is}
$$
\mathcal{E}_{\Sigma} :=
\begin{cases}
\mathcal{X} & \text{if} \quad m = n\\
\setdef{\bar{x}\in\mathcal{X}}{G^{\perp}(\bar{x}-f(\bar{x})) = \vzeros[n-m]} & \text{if} \quad m < n
\end{cases}
$$ 
with equilibrium-to-input map $\bar{u} = k_u(\bar{x}) = (G^{\sf T}G)^{-1}G^{\sf T}(\bar{x}-f(\bar{x}))$ and equilibrium-to-output map $\bar{y} = k_y(\bar{x}) = h(\bar{x}) + J\bar{u}$. 

\subsection{Review of Discrete-Time Dissipativity}

{\tb  In this subsection we make the additional assumptions that $f(\vzeros[n]) = \vzeros[n]$ and $h(\vzeros[n]) = \vzeros[p]$. Mirroring the definitions from Section \ref{Sec:ClassicDissipativityCT},} the system \eqref{Eq:DTNonlinearSystem} is dissipative with respect to the supply rate \eqref{Eq:SupplyNormal} if there exists a storage function $\map{V}{\mathcal{X}}{\real_{\geq 0}}$ with $V(\vzeros[n]) = 0$ such that
$$
V(x_{t+1}) - V(x_t) \leq {\sf w}(u_t,y_t)\,,
$$
for all $t \in \integer_{\geq 0}$ and {\tb all inputs $u \in \ell_{2e}^{m}[0,\infty)$}. 

While the characterization of continuous-time quadratically dissipative control-affine systems is well understood, the situation for discrete-time control-affine systems is less settled. The cases of lossless and passive systems were studied in \cite{CB-WL:94,WL-CB:95}. Dissipativity with general quadratic supply rates was studied in \cite{ICG-NSS:98} and further generalized to arbitrary supply rates in \cite{EMNL-HSR-RFC:02,EMNL:07}, which is the most general result the author is aware of. All useful known results however are restricted to the situation where the storage function $V(x_{t+1}) = V(f(x_t)+Gu_t)$ evaluated at the next time step is a quadratic function of $u_t$. Under this restriction, the following result is known. 


\smallskip

\begin{lemma}{\bf (Discrete-Time Hill-Moylan Conditions \cite{EMNL:07}):}\label{Lem:DTHillMoylan}
Consider the control-affine system $\Sigma$ in \eqref{Eq:DTNonlinearSystem}. Suppose there exists a twice continuously-differentiable function $\map{V}{\mathcal{X}}{\real_{\geq 0}}$ such that $V(f(x)+Gu)$ is quadratic in $u$. Then $\Sigma$ is dissipative with respect to the supply rate \eqref{Eq:SupplyNormal} with storage function $V(x)$ if and only if there exists an integer $k > 0$ and continuous functions $\map{l}{\mathcal{X}}{\real^k}$, $\map{W}{\mathcal{X}}{\real^{k \times m}}$, such that
\begin{subequations}\label{Eq:DTHillMoylan}
\begin{align}
\label{Eq:DTHillMoylan1}
&\hspace{-5em}\begin{aligned}
V(f(x)) - V(x) &= h(x)^{\sf T}Qh(x)- \|l(x)\|_2^2
\end{aligned}\\
\label{Eq:DTHillMoylan2}
\frac{1}{2}\nabla V(f(x))^{\sf T}G &= h(x)^{\sf T}(QJ+S) - W(x)^{\sf T}l(x)\\
\label{Eq:DTHillMoylan3}
&\hspace{-4em}\begin{aligned}
W(x)^{\sf T}W(x) &= \widehat{R} - \frac{1}{2}G^{\sf T}[\nabla^2 V(f(x))]G\,,
\end{aligned}
\end{align}
\end{subequations}
where $\widehat{R} = R + J^{\sf T}S + S^{\sf T}J + J^{\sf T}QJ$.
%
\end{lemma}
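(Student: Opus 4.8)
The plan is to exploit the standing hypothesis that $V(f(x)+Gu)$ is quadratic in $u$, which lets me collapse the dissipation inequality into a pointwise matrix inequality and then proceed exactly as in the necessity/sufficiency argument of Lemma \ref{Lem:HillMoylanIncremental}. The key observation is that, since $V(f(x)+Gu)$ is a quadratic polynomial in $u$, its second-order Taylor expansion about $f(x)$ in the direction $Gu$ is \emph{exact}:
\begin{equation*}
V(f(x)+Gu) = V(f(x)) + \nabla V(f(x))^{\sf T}Gu + \frac{1}{2}u^{\sf T}G^{\sf T}[\nabla^2 V(f(x))]Gu\,.
\end{equation*}
Substituting this, together with $y = h(x)+Ju$, into the dissipation residual ${\sf w}(u,y) - [V(f(x)+Gu)-V(x)]$ and expanding the quadratic supply rate \eqref{Eq:SupplyNormal}, I find that the residual is itself a quadratic form in $u$ with $x$-dependent coefficients. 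Dissipativity with storage function $V$ is therefore equivalent to this residual being nonnegative for all $(x,u) \in \mathcal{X}\times\mathcal{U}$.

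Next I would collect the residual in the form $\begin{bmatrix}1 & u^{\sf T}\end{bmatrix} M(x) \begin{bmatrix}1 \\ u\end{bmatrix} \geq 0$, where the symmetric matrix $M(x)$ has scalar block $c(x) = V(x) - V(f(x)) + h(x)^{\sf T}Qh(x)$, off-diagonal block $b(x) = (QJ+S)^{\sf T}h(x) - \frac{1}{2}G^{\sf T}\nabla V(f(x))$, and bottom-right block $A(x) = \widehat{R} - \frac{1}{2}G^{\sf T}[\nabla^2 V(f(x))]G$; symmetry of $A(x)$ follows from $R=R^{\sf T}$, $Q=Q^{\sf T}$, and symmetry of the Hessian. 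Since this form must be nonnegative for \emph{every} $u$, the same reasoning invoked in Lemma \ref{Lem:HillMoylanIncremental}, namely \cite[Lemma 4.1.3]{AJvdS:16}, shows that dissipativity holds if and only if $M(x) \succeq \vzeros[]$ for all $x$. I would then factorize $M(x) = [\,l(x)\ \ W(x)\,]^{\sf T}[\,l(x)\ \ W(x)\,]$, with $l(x) \in \real^{k}$ the first column and $W(x) \in \real^{k\times m}$ the remaining columns; matching the $(1,1)$, $(1,2)$, and $(2,2)$ blocks against $c(x)$, $b(x)$, and $A(x)$ reproduces \eqref{Eq:DTHillMoylan1}, \eqref{Eq:DTHillMoylan2}, and \eqref{Eq:DTHillMoylan3}, respectively. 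The sufficiency direction is then the reverse computation: substituting the three identities into the exact expansion and completing the square yields $V(f(x)+Gu)-V(x) = {\sf w}(u,y) - \|l(x)+W(x)u\|_2^2 \leq {\sf w}(u,y)$, which is precisely the dissipation inequality.

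The main obstacle is showing that the factor $[\,l(x)\ \ W(x)\,]$ can be chosen to depend \emph{continuously} on $x$, as the lemma requires. In contrast to the continuous-time setting, here there is no need to argue that $W$ can be taken constant, since $W(x)$ is permitted to be state-dependent, so the delicate orthogonal-matrix step of Lemma \ref{Lem:HillMoylanIncremental} is avoided entirely. I would resolve continuity by taking the factor to be the symmetric positive-semidefinite square root $M(x)^{1/2}$, so that $M(x) = (M(x)^{1/2})^{\sf T}M(x)^{1/2}$ and $k = m+1$; the square root exists since $M(x) \succeq \vzeros[]$ and depends continuously on its argument. Because $V$ is twice continuously differentiable and $f$, $h$ are continuous, $M(x)$ is continuous in $x$, and hence so are $l(x)$ and $W(x)$, completing the argument.
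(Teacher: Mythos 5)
Your proof is correct. One caveat on the comparison: the paper does not actually prove this lemma --- it is stated as a known result imported from \cite{EMNL:07} --- so the natural benchmark is the paper's appendix proof of the closely related Lemma \ref{Lem:DTHillMoylanIncremental}, and your argument follows exactly that template: form the dissipation residual ${\sf d}(x,u) = {\sf w}(u,y) - [V(f(x)+Gu)-V(x)]$, use the structural hypothesis (here, exactness of the second-order Taylor expansion of $V(f(x)+Gu)$ in $u$; there, the quadratic storage $\|x-\bar{x}\|_P^2$) to make the residual an exact quadratic form in $(1,u)$, invoke \cite[Lemma 4.1.3]{AJvdS:16} to convert nonnegativity for all $u$ into pointwise positive semidefiniteness of the coefficient matrix $M(x)$, factor it, and equate blocks; sufficiency is the reverse completion-of-squares, identical in structure to the paper's sufficiency computations. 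Your treatment adds two things worth noting. First, you correctly identify that the delicate step in the paper's EID proofs --- forcing $W$ to be constant via the orthogonal-factor argument of Lemma \ref{Lem:Orthogonal} --- is unnecessary here precisely because the statement permits a state-dependent $W(x)$; this is the genuine reason the classical discrete-time lemma is easier than its EID counterpart. Second, the continuity of $l$ and $W$, which the lemma requires but which factorization arguments of this kind often leave implicit, is discharged cleanly by taking the factor to be the symmetric positive-semidefinite square root $M(x)^{1/2}$ with $k = m+1$: the square root is continuous on the PSD cone and $x \mapsto M(x)$ is continuous because $V$ is twice continuously differentiable and $f,h$ are continuous. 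A purely cosmetic remark: in \eqref{Eq:DTHillMoylan2} the left-hand side $\tfrac{1}{2}\nabla V(f(x))^{\sf T}G$ is a row vector while $W(x)^{\sf T}l(x)$ is a column, so the equation is to be read up to transposition; your block identity $b(x) = W(x)^{\sf T}l(x)$ (in column form) is the correct reading, and nothing in your argument is affected.
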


\subsection{Discrete-Time Equlibrium-Independent Dissipativity}

We begin with the key definition.

\smallskip

\begin{definition}{\bf (Discrete-Time EID):}\label{Def:DTEID}
The control-affine system \eqref{Eq:DTNonlinearSystem} is equilibrium-independent dissipative (EID) with  supply rate ${\sf w}(u,y)$ if, for every equilibrium $\bar{x} \in \mathcal{E}_{\Sigma}$, there exists a storage function $\map{V_{\bar{x}}}{\mathcal{X}}{\real_{\geq 0}}$ such that $V_{\bar{x}}(\bar{x}) = 0$ and
\begin{equation}\label{Eq:DTEID}
V_{\bar{x}}(x_{t+1})-V_{\bar{x}}(x_{t}) \leq {\sf w}(u_t-\bar{u},y_t-\bar{y})\,,
\end{equation}
{\tb for all $t \in \mathbb{Z}_{\geq 0}$ and all inputs $u \in \ell_{2e}^{m}[0,\infty)$, where $\bar{u} = k_u(\bar{x})$, $\bar{y} = k_y(\bar{x})$.}
\end{definition}

\medskip

Lemma \ref{Lem:IOMappings} holds for discrete-time systems without changes. To go from dissipativity to equilibrum-independent dissipativity for continuous-time systems in Section \ref{Sec:EID}, we were obliged to (i) strengthen the requirements on the storage function (in the continuous-time case, convexity was assumed), and (ii) replace the first two Hill-Moylan conditions \eqref{Eq:HillMoylan1}--\eqref{Eq:HillMoylan2} with incremental variants. To obtain similar results for discrete-time, we will be obliged to do the same. Here in discrete-time, we strengthen the requirement that $V(f(x)+Gu)$ be quadratic in $u$ to requiring quadratic storage functions $V(x) = x^{\sf T}Px$. 

\medskip

\begin{lemma}{\bf (Conditions for Discrete-Time EID):}\label{Lem:DTHillMoylanIncremental}
Consider the discrete-time control-affine system $\Sigma$ in \eqref{Eq:DTNonlinearSystem}. Let $P = P^{\sf T} \in \real^{n \times n}$ be positive semidefinite, and for $\bar{x} \in \mathcal{E}_{\Sigma}$, let $V_{\bar{x}}(x) := \|x-\bar{x}\|_P^2$. The system $\Sigma$ is EID with respect to the supply rate ${\sf w}(u,y)$ in \eqref{Eq:SupplyNormal} with storage function $V_{\bar{x}}(x)$ if and only if there exists an integer $k > 0$, a matrix $W \in \real^{k \times m}$ and a continuous function {\tb $\map{\ell}{\mathcal{X}\times\mathcal{X}}{\real^k}$} such that
\begin{subequations}\label{Eq:DTHillMoylanEID}
\begin{align}
\label{Eq:DTHillMoylanEID1}
&\begin{aligned}
\|f(x)-&f(\bar{x})\|_P^2 - \|x-\bar{x}\|_P^2\ = - {\tb \|\ell(x,\bar{x})\|_2^2}\\
&\quad +[h(x)-h(\bar{x})]^{\sf T}Q[h(x)-h(\bar{x})]\\
\end{aligned}\\
\label{Eq:DTHillMoylanEID2}
&\begin{aligned}
{\tb [f(x)-f(\bar{x})]^{\sf T}PG} &= {\tb [h(x)-h(\bar{x})]^{\sf T}(QJ+S)}\\
&{\tb \quad - \ell(x,\bar{x})^{\sf T}W }
\end{aligned}\\
\label{Eq:DTHillMoylanEID3}
&
\begin{aligned}
W^{\sf T}W &= \widehat{R} -  G^{\sf T}PG
\end{aligned}
\end{align}
\end{subequations}
where $\widehat{R} = R + J^{\sf T}S + S^{\sf T}J + J^{\sf T}QJ$.
{\tb The function $\ell(x,\bar{x})$ appearing in \eqref{Eq:HillMoylanEID1}--\eqref{Eq:HillMoylanEID2} may always be chosen to have the form 
$$
\ell(x,\bar{x}) = l(x)-l(\bar{x}) + Tq(x,\bar{x})\,,
$$
where $\map{l}{\mathcal{X}}{\real^k}$, the columns of $T \in \real^{k \times r}$ with $r = \dim(\ker(W^{\sf T}))$ form a basis for $\ker(W^{\sf T})$, and $\map{q}{\mathcal{X}\times\mathcal{X}}{\real^r}$ satisfies $q(x,x) = \vzeros[r]$ for all $x \in \mathcal{X}$.}
\end{lemma}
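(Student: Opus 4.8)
The plan is to mirror the proof of Lemma~\ref{Lem:HillMoylanIncremental} almost line for line, with the discrete-time increment $V_{\bar{x}}(x_{t+1})-V_{\bar{x}}(x_t)$ playing the role that $\dot{V}_{\bar{x}}$ plays there. Since $V_{\bar{x}}(x)=\|x-\bar{x}\|_P^2$ with $P\succeq\vzeros[]$ is automatically nonnegative and vanishes exactly at $\bar{x}$, no analogue of the Bregman lemma is needed. I would establish the two implications separately.

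For \emph{sufficiency}, fix $\bar{x}\in\mathcal{E}_{\Sigma}$ with associated $\bar{u}=k_u(\bar{x})$, $\bar{y}=k_y(\bar{x})$, so that $G\bar{u}=\bar{x}-f(\bar{x})$ and $\bar{y}=h(\bar{x})+J\bar{u}$. Writing $x_{t+1}-\bar{x}=[f(x)-f(\bar{x})]+G(u-\bar{u})$ and expanding the quadratic gives
\begin{equation*}
\begin{aligned}
V_{\bar{x}}(x_{t+1})-V_{\bar{x}}(x_t) &= \|f(x)-f(\bar{x})\|_P^2-\|x-\bar{x}\|_P^2\\
&\quad +2[f(x)-f(\bar{x})]^{\sf T}PG(u-\bar{u})\\
&\quad +(u-\bar{u})^{\sf T}G^{\sf T}PG(u-\bar{u})\,.
\end{aligned}
\end{equation*}
Substituting \eqref{Eq:DTHillMoylanEID1}--\eqref{Eq:DTHillMoylanEID3} and recognising the remaining $\ell$- and $W^{\sf T}W$-terms as $-\|\ell(x,\bar{x})+W(u-\bar{u})\|_2^2\le 0$, which I drop, I obtain an upper bound of the form $[h(x)-h(\bar{x})]^{\sf T}Q[h(x)-h(\bar{x})]+2[h(x)-h(\bar{x})]^{\sf T}(QJ+S)(u-\bar{u})+(u-\bar{u})^{\sf T}\widehat{R}(u-\bar{u})$. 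Substituting $h(x)-h(\bar{x})=(y-\bar{y})-J(u-\bar{u})$ and collecting terms turns this bound into exactly ${\sf w}(u-\bar{u},y-\bar{y})$, which is \eqref{Eq:DTEID}.

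For \emph{necessity}, I would introduce the nonnegative dissipation gap ${\sf d}_{\bar{x}}(x,u):=-[V_{\bar{x}}(x_{t+1})-V_{\bar{x}}(x_t)]+{\sf w}(u-\bar{u},y-\bar{y})$ and, after eliminating $y$ and $\bar{y}$, express it as
\begin{equation*}
{\sf d}_{\bar{x}}(x,u)=\begin{bmatrix}1\\ u-\bar{u}\end{bmatrix}^{\sf T}\underbrace{\begin{bmatrix} a(x,\bar{x}) & b(x)^{\sf T}-b(\bar{x})^{\sf T}\\ b(x)-b(\bar{x}) & \widehat{R}-G^{\sf T}PG\end{bmatrix}}_{:=\,\mathcal{D}(x,\bar{x})}\begin{bmatrix}1\\ u-\bar{u}\end{bmatrix}\,,
\end{equation*}
with $a(x,\bar{x})=\|x-\bar{x}\|_P^2-\|f(x)-f(\bar{x})\|_P^2+[h(x)-h(\bar{x})]^{\sf T}Q[h(x)-h(\bar{x})]$ and $b(x)^{\sf T}=-f(x)^{\sf T}PG+h(x)^{\sf T}(QJ+S)$. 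Since $u$ (hence $u-\bar{u}$) is free, ${\sf d}_{\bar{x}}\ge 0$ forces $\mathcal{D}(x,\bar{x})\succeq\vzeros[]$ \cite[Lemma 4.1.3]{AJvdS:16}, so $\mathcal{D}$ factors as $\begin{bmatrix}\tilde{\ell} & W(x,\bar{x})\end{bmatrix}^{\sf T}\begin{bmatrix}\tilde{\ell} & W(x,\bar{x})\end{bmatrix}$ and equating blocks reproduces the three equations with a possibly state-dependent $W(x,\bar{x})$. Because the $(2,2)$ block is the \emph{constant} matrix $\widehat{R}-G^{\sf T}PG$, Lemma~\ref{Lem:Orthogonal} permits $W(x,\bar{x})=\mathcal{O}(x,\bar{x})W$ with $W$ constant; setting $\ell:=\mathcal{O}^{\sf T}\tilde{\ell}$ absorbs the orthogonal factor and produces \eqref{Eq:DTHillMoylanEID1}--\eqref{Eq:DTHillMoylanEID3}. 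The final structural claim on $\ell$ then follows verbatim from the continuous-time argument: $a(x,x)=0$ gives $\ell(x,x)=\vzeros[k]$, and applying Lemma~\ref{Lem:DifferenceFunction} to $W^{\sf T}\ell(x,\bar{x})=b(x)-b(\bar{x})$ yields the cocycle identity, whose particular and homogeneous solutions combine into $\ell(x,\bar{x})=l(x)-l(\bar{x})+Tq(x,\bar{x})$ with $q(x,x)=\vzeros[r]$.

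The one genuinely new point, and the step I expect to require the most care, is the constancy of the $(2,2)$ block. Expanding $\|x_{t+1}-\bar{x}\|_P^2$ introduces the input-quadratic term $(u-\bar{u})^{\sf T}G^{\sf T}PG(u-\bar{u})$, and it is precisely because the storage is taken to be the \emph{quadratic} $\|x-\bar{x}\|_P^2$ (so that the curvature contribution $G^{\sf T}PG$ is state-independent) that this block stays constant and the reduction to a constant $W$ via Lemma~\ref{Lem:Orthogonal} succeeds. For a general storage function this block would instead carry the state-dependent term $G^{\sf T}[\nabla^2 V(f(x))]G$, exactly as in \eqref{Eq:DTHillMoylan3} of Lemma~\ref{Lem:DTHillMoylan}, and the clean constant-$W$ Hill-Moylan form would be lost; this is the structural reason the lemma restricts to quadratic $V$, playing here the role that convexity plays in the continuous-time result.
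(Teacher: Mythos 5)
Your proposal is correct and follows essentially the same route as the paper's own proof: the identical quadratic expansion and completion-of-squares for sufficiency, and for necessity the same nonnegative dissipation gap written as a quadratic form in $(1,u-\bar{u})$, reduction to a constant $W$ via Lemma~\ref{Lem:Orthogonal}, and the cocycle argument via Lemma~\ref{Lem:DifferenceFunction} for the structural form of $\ell$. Your closing remark on why the quadratic storage keeps the $(2,2)$ block constant correctly spells out the detail the paper leaves implicit in its appeal to ``arguments similar to those in the proof of Lemma~\ref{Lem:HillMoylanIncremental}.''
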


\begin{proof}.
See appendix.
\end{proof}

\medskip
 
Equation \eqref{Eq:DTHillMoylanEID3} {\tb is the third condition from Lemma \ref{Lem:DTHillMoylan}, specialized to a quadratic storage function, while \eqref{Eq:DTHillMoylanEID1}--\eqref{Eq:DTHillMoylanEID2} are incremental variants of the previous conditions \eqref{Eq:DTHillMoylan1}--\eqref{Eq:DTHillMoylan2}.} To interpret the new condition \eqref{Eq:DTHillMoylanEID1}, consider the case where $P \succ \vzeros[]$ and $Q \preceq \vzeros[]$. Then \eqref{Eq:DTHillMoylanEID1} implies that
$$
\|f(x)-f(\bar{x})\|_P^2 \leq \|x-\bar{x}\|_P^2\,.
$$
for all $(x,\bar{x}) \in \mathcal{X} \times \mathcal{E}_{\Sigma}$. If this holds for all $x, \bar{x} \in \mathcal{X}$, then $f$ is non-expansive on $\mathcal{X}$ in the norm $\|\cdot\|_P$. Thus, Lemma \ref{Lem:DTHillMoylanIncremental} replaces the stability-like condition \eqref{Eq:DTHillMoylan1} with the new incremental-stability-like condition \eqref{Eq:DTHillMoylanEID1}. {\tb Internal and feedback stability results for EID systems can be derived in the discrete-time case just as they were in continuous-time in Section \ref{Sec:Stability}; we omit the details, but illustrate the application of these results with an example.}

\medskip


\begin{example}{\bf (Input/Output Gradient Method):}\label{Ex:GradDT}
Consider the unconstrained optimization problem
\begin{equation}\label{Eq:Optimization}
\begin{aligned}
& \underset{x \in \mathbb{R}^n}{\text{minimize}}
& & \phi(x)
\end{aligned}
\end{equation}
where $\map{\phi}{\real^n}{\real}$ is differentiable, $\mu$-strongly convex and $\nabla \phi$ is $L$-Lipschitz, with $0 < \mu \leq L$. Let us define an \emph{input/output gradient method} for \eqref{Eq:Optimization}:
\begin{equation}\label{Eq:GradientClosedLoop}
\begin{aligned}
x_{t+1} &= x_t - \alpha(\nabla \phi(x_t) - v_t)\\
y_{t} &= x_t
\end{aligned}
\end{equation}
where $\alpha > 0$ is the step size and $v \in \ell_{2e}^n[0,\infty)$ is an auxiliary input. {\tb We interpret $v$ as a disturbance to (or error in) the calculated gradient $\nabla \phi(x_t)$, with $v = \vzeros[m]$ recovering the usual gradient method \cite[Sec. 1.2]{DPB:95b}.} {\tb A standard analysis from the optimization literature when $v = \vzeros[m]$ shows that, under the stated assumptions, the gradient method \eqref{Eq:GradientClosedLoop} converges to the unique global minimizer of $\phi$ if $\alpha < \frac{2}{L}$ \cite[Prop. 1.2.3]{DPB:95b}. We will show that this result can be obtained via EID theory.} {\tb To begin, }the system \eqref{Eq:GradientClosedLoop} can be considered as the negative feedback interconnection of the LTI system
\begin{equation}\label{Eq:GradientOpenLoop}
\begin{aligned}
x_{t+1} &= x_t + \alpha u_t\\
y_{t} &= x_t
\end{aligned}
\end{equation}
with the static nonlinearity $\tilde{y}_t = \nabla \phi(\tilde{u}_t)$, i.e., the interconnection $u_t = -\nabla \phi(y_t) + v_t$. {\tb Regarding \eqref{Eq:GradientOpenLoop}, note that $\bar{u} = \vzeros[n]$ is the only possible equilibrium input.} Consider now the function $V(x) = \frac{1}{2\alpha}\|x\|_2^2$, leading to the {\tb candidate EID storage function family} $V_{\bar{x}}(x) = \frac{1}{2\alpha}\|x-\bar{x}\|_2^2$. A simple computation shows that along {\tb trajectories of} \eqref{Eq:GradientOpenLoop},
\begin{equation}\label{Eq:GradientDissipationInequality}
V_{\bar{x}}(x_{t+1}) - V_{\bar{x}}(x_t) = (y_t-\bar{y})^{\sf T}u_t + \frac{\alpha}{2}u_t^{\sf T}u_t\,.
\end{equation}
Therefore, \eqref{Eq:GradientOpenLoop} is EID with supply parameters $(Q,S,R) = (\vzeros[],\frac{1}{2}I_n,\frac{\alpha}{2}I_n)$. 
%

{\tb Since $\phi$ is $\mu$-strongly convex, $\nabla \phi$ is $\mu$-strongly monotone and satisfies the EID inequality \eqref{Eq:EIDNonlinearity} with $(Q,S,R) = (\vzeros[],\frac{1}{2}I_n,-\mu I_n)$. Moreover, since $\nabla \phi$ is both monotone and $L$-Lipschitz, it is also $\frac{1}{L}$-cocoercive \cite[Corollary 18.16]{HB-PC:11}, and therefore $\nabla \phi$ satisfies a second EID inequality with  $(Q,S,R) = (-\frac{1}{L}I_n,\frac{1}{2}I_n,\vzeros[])$.} Taking a convex combination of these two EID inequalities, it follows that for any $\lambda \in [0,1]$, $\nabla\phi$ satisfies the EID inequality \eqref{Eq:EIDNonlinearity} with $(Q,S,R) = (-\lambda\frac{1}{L}I_n,\frac{1}{2}I_n,-(1-\lambda)\mu I_n)$. Applying {\tb (the discrete-time analog of)} Theorem \ref{Thm:EIDFeedback}, {\tb it follows that for any $\lambda \in [0,1]$, the interconnection with input $v_t$ and outputs $(y_t,\tilde{y}_t)$ is EID with supply rate $(Q_{\rm cl},S_{\rm cl},R_{\rm cl})$ given by
$$
Q_{\rm cl} = 
-\begin{bmatrix}
(1-\lambda)\mu & 0\\
0 & \frac{\lambda}{L}-\frac{\alpha}{2}
\end{bmatrix},\, S_{\rm cl} = \begin{bmatrix}
1/2 \\ \alpha/2
\end{bmatrix},\, R_{\rm cl} = \frac{\alpha}{2}\,.
$$
The closed-loop system is internally stable if $Q_{\rm cl} \prec \vzeros[]$}, which is true if and only if $\lambda \in (0,1)$ and {\tb $0 < \alpha < \alpha_{\rm crit}(\lambda) := \frac{2}{L}\lambda$. Maximizing the upper bound $\alpha_{\rm crit}(\lambda)$ over $\lambda \in (0,1)$, we see that $\alpha \in (0,\frac{2}{L})$ is sufficient for stability, which recovers the known step-size result.}

Moving beyond stability to input-output performance, we can examine the equilibrium-independent $\ell_2$-gain for the mapping $v_t \mapsto y_t = x_t$, as a measure of robustness to disturbances. For this mapping, we set $\lambda = 0$ and therefore have EID with respect to the supply rate
{\tb
$$
\begin{aligned}
{\sf w}_{}(v,y) &= -\mu y^{\sf T}y + y^{\sf T}v + \frac{\alpha}{2} v^{\sf T}v\,.
\end{aligned}
$$
}
Applying Lemma \ref{Eq:LemBizzaroL2} (see appendix), we conclude that the I/O mapping $v \mapsto x$ has finite equilibrium-independent $\ell_2$-gain, bounded as
{\tb
\begin{align}\label{Eq:L2Bounds1}
\|\Sigma_{v\,\mapsto\, x}\|_{\ell_2}^2 &\leq \gamma^2 := \frac{1}{\mu^2}\frac{\mu \frac{\alpha}{2}+\frac{1+\sqrt{2\mu \alpha+1}}{4}}{1-\frac{1}{1+\sqrt{2\mu \alpha+1}}}\,.
\end{align}
}
Note that the Lipschitz constant $L$ of $\varphi$ does not enter explicitly\footnote{Of course, in practice $\alpha$ must be selected with some knowledge of $L$ \cite{LL-BR-AP:16}.} into this bound, which depends only on the strong convexity parameter $\mu$ and the step size $\alpha$. {\tb The upper bound is a monotonically increasing function of $\alpha$; small step sizes therefore improve the worst-case I/O performance, but will also lead to slower convergence. Finally, we note that the bound satisfies $\gamma \rightarrow 1/\mu$ as $\alpha \rightarrow 0$ (c.f. \cite[Theorem 4.1]{JWSP:16f}). Therefore, input-output performance is ultimately limited by the modulus of strong convexity of $\phi$}. 
\hfill \oprocend
\end{example}

\medskip

\section{Conclusions}
\label{Section: Conclusions}

This paper has presented a systematic treatment of equilibrium-independent dissipativity for a common class of control-affine nonlinear systems. We have provided a Hill-Moylan-type characterization of EID for both continuous and discrete-time systems, presented some associated internal and feedback stability results, and applied the results to examples in both continuous and discrete time.

Future work will explore applications of these results to the analysis and control of large-scale cyber-physical systems \cite{PJA-BG-VG-MJM-YW-PW-MX-HY-FZ:13}, in particular to applications in power systems. For such applications, extending the present results to differential-algebraic systems would be desirable. Another key direction is to further apply EID and the associated Hill-Moylan conditions developed here to the analysis and design of convex optimization algorithms. In this latter context, an EID-based approach seems particularly well suited due to the presence of monotone nonlinearities, and similar to \cite{LL-BR-AP:16} may provide an intuitive framework for both certifying and improving algorithm performance. Treating EID from a purely input/output point of view is also of interest, {\tb as is developing local versions of the results herein.} 

\section*{Acknowledgements}

The author thanks N. Monshizadeh, C. Nielsen, B. Bamieh, and J. Marden for helpful discussions related to this work.

\renewcommand{\baselinestretch}{1}
\bibliographystyle{IEEEtran}

\bibliography{/Users/jwsimpso/GoogleDrive/JohnSVN/bib/alias,%
/Users/jwsimpso/GoogleDrive/JohnSVN/bib/Main,%
/Users/jwsimpso/GoogleDrive/JohnSVN/bib/JWSP,%
/Users/jwsimpso/GoogleDrive/JohnSVN/bib/New%
}


\begin{IEEEbiography}[{\includegraphics[width=1in,height=1.25in,clip,keepaspectratio]{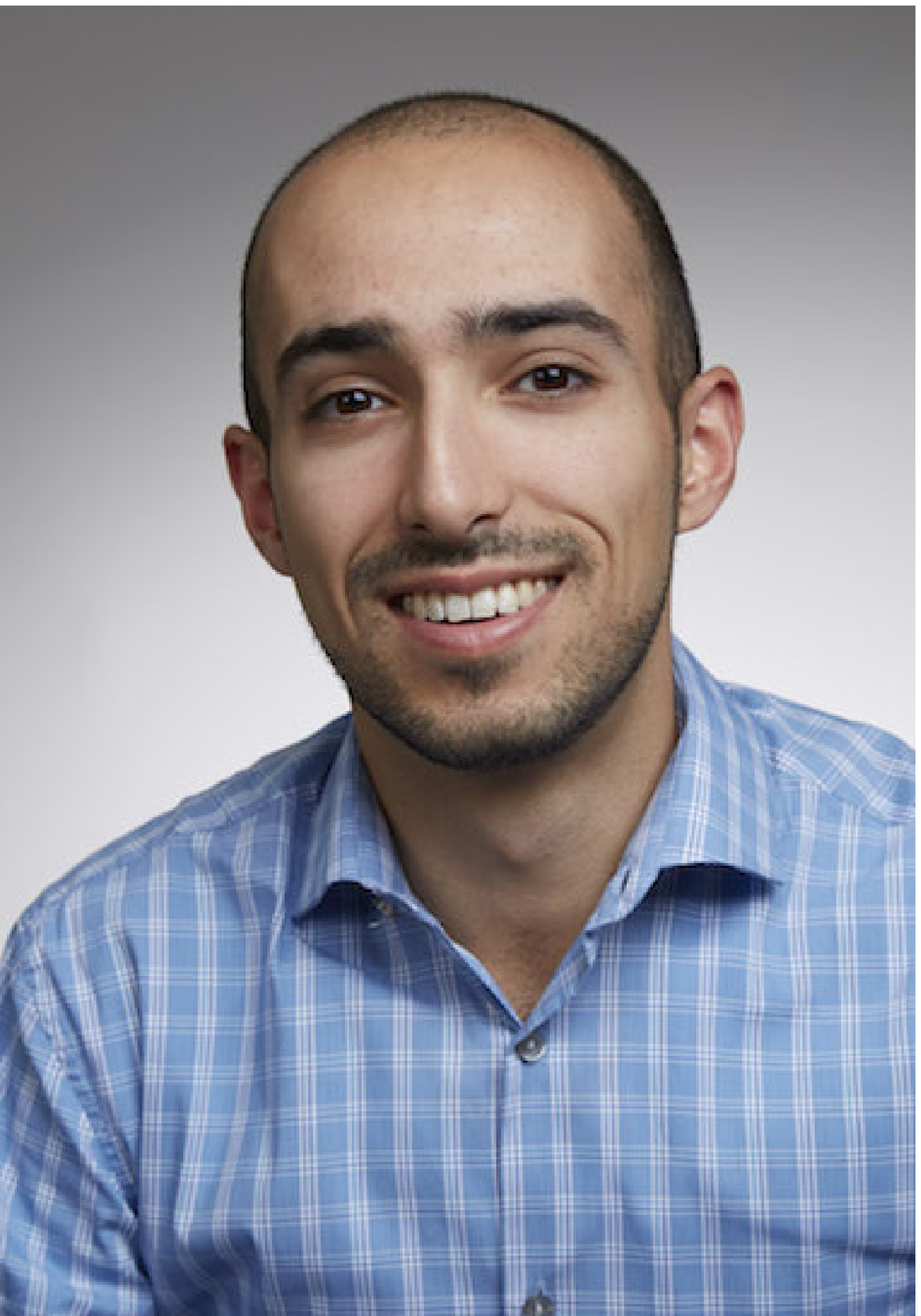}}]{John W. Simpson-Porco} (S'11--M'16) received the B.Sc. degree in engineering physics from Queen's University, Kingston, ON, Canada in 2010, and the Ph.D. degree in mechanical engineering from the University of California at Santa Barbara, Santa Barbara, CA, USA in 2015.

He is currently an Assistant Professor of Electrical and Computer Engineering at the University of Waterloo, Waterloo, ON, Canada. He was previously a visiting scientist with the Automatic Control Laboratory at ETH Z\"{u}rich, Z\"{u}rich, Switzerland. His research focuses on the control and optimization of multi-agent systems and networks, with applications in modernized power grids.

Prof. Simpson-Porco is a recipient of the 2012--2014 IFAC Automatica Prize and the Center for Control, Dynamical Systems and Computation Best Thesis Award and Outstanding Scholar Fellowship.
\end{IEEEbiography}

\appendix[Supporting Lemmas and Proofs]

\begin{lemma}{\bf (Maximal Monotonicity of $\boldsymbol{\mathcal{K}_{\Sigma}}$):}\label{Lem:MaxMonotone}
Suppose that the equilibrium I/O relation $\mathcal{K}_{\Sigma} \subset \real^m \times \real^m$ for a square {\tb continuous-time system \eqref{Eq:NonlinearSystem} (resp. discrete-time system \eqref{Eq:DTNonlinearSystem})} system is monotone. Then $\mathcal{K}_{\Sigma}$ is maximally monotone if
\begin{enumerate}
\item $\mathcal{K}_{\Sigma}$ is $\rho$-cocoercive {\tb with $\rho > 0$}, or
\item $\mathcal{K}_u^{-1} \subseteq \real^m \times \real^n$ is upper hemicontinuous, or
\item $f$ is {\tb a homeomorphism} {\tb (resp. $x \mapsto f(x)-x$ is a homeomorphism)}, or
\item $f$ is the zero map {\tb (resp. the identity map)}.
\end{enumerate}
\end{lemma}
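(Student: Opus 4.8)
The plan is to obtain each of the four sufficient conditions from standard characterizations of maximal monotonicity collected in \cite[Chapter 20]{HB-PC:11}, exploiting the recurring principle that a monotone operator possessing enough continuity together with a full domain admits no proper monotone extension. Throughout I would use the defining characterization that a monotone $\mathcal{K}_{\Sigma}$ is maximally monotone precisely when every pair $(\hat{u},\hat{y})$ satisfying $(\hat{y}-\bar{y})^{\sf T}(\hat{u}-\bar{u}) \geq 0$ for all $(\bar{u},\bar{y}) \in \mathcal{K}_{\Sigma}$ already belongs to $\mathcal{K}_{\Sigma}$; equivalently, by Minty's theorem, when $\mathrm{Id}+\mathcal{K}_{\Sigma}$ is surjective onto $\real^m$. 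The proof is then modular across the four cases, each of which serves to close the same continuity/domain gap.

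Condition (1) is handled directly by cocoercivity. Specializing Lemma \ref{Lem:IOMappings} to $(Q,S,R)=(-\rho I_m,\tfrac{1}{2}I_m,\vzeros[])$ gives $(\bar{u}-\tilde{u})^{\sf T}(\bar{y}-\tilde{y}) \geq \rho\|\bar{y}-\tilde{y}\|_2^2$, so by Cauchy--Schwarz $\mathcal{K}_{\Sigma}$ is single-valued and $\tfrac{1}{\rho}$-Lipschitz, and $\rho\mathcal{K}_{\Sigma}$ is firmly nonexpansive; a firmly nonexpansive operator with full domain is maximally monotone. In the settings where this case is actually invoked (e.g.\ $\mathcal{K}_{\Sigma^\prime}$ in Theorem \ref{Thm:Absolute}) cocoercivity holds on all of $\real^m$, supplying the required full domain.

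Conditions (3) and (4) I would reduce to condition (2) by showing they force $\mathcal{K}_u^{-1}$ to be upper hemicontinuous. In continuous time $k_u = -(G^{\sf T}G)^{-1}G^{\sf T}f$; since $G$ has full column rank the linear map $-(G^{\sf T}G)^{-1}G^{\sf T}$ is surjective onto $\real^m$, so when $f$ is a homeomorphism one has $\mathrm{range}(k_u)=\real^m$ and each fibre $\mathcal{K}_u^{-1}(u) = f^{-1}(\{z : (G^{\sf T}G)^{-1}G^{\sf T}z = -u\})$ is the continuous preimage of an affine subspace, hence closed and varying upper hemicontinuously with $u$; the discrete-time statement is verbatim with $f$ replaced by $x \mapsto f(x)-x$. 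Condition (4) is the degenerate specialization in which $k_u$ is constant, whence $\mathcal{K}_u^{-1}$ is trivially upper hemicontinuous. It therefore suffices to prove the result under condition (2).

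Under condition (2) the argument is the classical line-segment (Minty) argument. Continuity of $k_y$ and upper hemicontinuity of $\mathcal{K}_u^{-1}$ make the composition $\mathcal{K}_{\Sigma}=k_y\circ\mathcal{K}_u^{-1}$ upper hemicontinuous with closed values, and I would take a candidate $(\hat{u},\hat{y})$ monotonically compatible with $\mathcal{K}_{\Sigma}$, probe along segments $\bar{u}_t = \hat{u}+td$, select $\bar{y}_t \in \mathcal{K}_{\Sigma}(\bar{u}_t)$, use compatibility to control $(\hat{y}-\bar{y}_t)^{\sf T}d$, and let $t \to 0^+$, invoking upper hemicontinuity to extract a limit $\bar{y}_0 \in \mathcal{K}_{\Sigma}(\hat{u})$; varying the direction $d$ then forces $\hat{y}\in\mathcal{K}_{\Sigma}(\hat{u})$ and hence maximality (compare the single-valued statement in \cite[Corollary 20.28]{HB-PC:11}). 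The main obstacle lies precisely here in the set-valued setting: the limiting selection argument requires that $\bar{u}_t$ remain in $\mathrm{dom}\,\mathcal{K}_{\Sigma}$ and that the values be closed and convex, so the delicate point is guaranteeing a full (or at least interior-rich) domain. This is immediate under condition (3) but is exactly what must be supplied separately under the bare hypothesis (2) and in the degenerate case (4); I would handle it by restricting to the interior of $\mathrm{dom}\,\mathcal{K}_{\Sigma}$ and using the local boundedness of monotone operators there.
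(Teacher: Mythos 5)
Your case (1) matches the paper's argument (cocoercivity $\Rightarrow$ Lipschitz continuity $\Rightarrow$ maximal monotonicity via \cite[Corollary 20.25]{HB-PC:11}), and your reduction of case (4) to case (2) is also the paper's route. The genuine problems are in cases (3) and (2). For (3), you write the fibre as $\mathcal{K}_u^{-1}(u) = f^{-1}(\{z : (G^{\sf T}G)^{-1}G^{\sf T}z = -u\})$, the preimage of an $(n-m)$-dimensional affine subspace, and assert that such preimages are ``closed and varying upper hemicontinuously with $u$.'' Both halves of this step fail. First, an equilibrium for input $\bar{u}$ must satisfy $f(\bar{x}) + G\bar{u} = \vzeros[n]$, i.e.\ $f(\bar{x}) = -G\bar{u}$ exactly; your formula omits the constraint $G^{\perp}f(\bar{x}) = \vzeros[n-m]$, so when $m<n$ your fibres contain non-equilibrium points. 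When $f$ is a homeomorphism the correct fibre is the singleton $f^{-1}(-G\bar{u})$. Second, closedness of fibres does not give upper hemicontinuity when the fibres are unbounded: already for $f = \mathrm{id}$, $n=2$, $m=1$, $G = (0,1)^{\sf T}$, your fibres are the horizontal lines $\{(x_1,-u) : x_1 \in \real\}$, and the open set $V = \{(x_1,x_2) : |x_2| < 1/(1+x_1^2)\}$ contains the fibre at $u=0$ but contains no nearby fibre, so the fibre map is not upper hemicontinuous at $0$. The paper avoids all of this by handling (3) directly: a homeomorphism makes $k_u^{-1}(\bar{u}) = f^{-1}(-G\bar{u})$ a single-valued continuous function defined on all of $\real^m$, so $\mathcal{K}_{\Sigma} = k_y \circ k_u^{-1}$ is a continuous monotone mapping with full domain and \cite[Corollary 20.25]{HB-PC:11} applies at once --- this is both simpler and supplies exactly the full-domain property you identify as the sticking point. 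By routing (3) through (2) you turn the easiest case into the hardest one.

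The treatment of case (2) --- which in your architecture must also carry (3) and (4) --- is a sketch rather than a proof. The paper disposes of (2) by citing \cite[Prop.\ 20.24]{HB-PC:11} for monotone upper hemicontinuous relations; you instead outline Minty's line-segment argument and then concede the obstructions yourself (values must be closed and convex, the probing segments must remain in the domain), proposing to repair this by ``restricting to the interior of $\mathrm{dom}\,\mathcal{K}_{\Sigma}$'' and invoking local boundedness. That repair cannot work: in case (4), since $G$ has full column rank, $f = \vzeros[n]$ forces $\bar{u} = \vzeros[m]$, so $\mathrm{dom}\,\mathcal{K}_{\Sigma} = \{\vzeros[m]\}$ has empty interior; and in any case maximality concerns extensions in all of $\real^m \times \real^m$, not behaviour on the interior of the domain. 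So as written, case (2) --- and with it (3) and (4) --- remains open in your proposal. Your observation that case (1) implicitly requires $\mathrm{dom}\,\mathcal{K}_{\Sigma} = \real^m$ is fair (the paper's citation carries the same hypothesis), but deferring it to ``the settings where this case is actually invoked'' proves a different statement than the lemma.
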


\smallskip

\begin{proof}.
(i): If $\mathcal{K}_{\Sigma}$ is monotone and $\rho$-cocoercive, then it is $\rho^{-1}$-Lipschitz, and is therefore a continuous mapping; continuous monotone mappings are maximally monotone \cite[Corollary 20.25]{HB-PC:11}. (ii): If $\mathcal{K}_u^{-1}$ is upper hemicontinuous, then $\mathcal{K}_{\Sigma} = k_y \circ \mathcal{K}_u^{-1}$ is also upper hemicontinuous, and monotone upper hemicontinuous relations are maximally monotone \cite[Prop. 20.24]{HB-PC:11}. (iii): If $f$ {\tb (resp. $F(x):= f(x)-x$)} is a homeomorphism, then for any $\bar{u} \in \real^m$ there exists $\bar{x} \in \real^n$ satisfying the equilibrium equations $\vzeros[n] = f(\bar{x}) + G\bar{u}$ in continuous-time or $\vzeros[n] = F(\bar{x}) + G\bar{u}$ in discrete-time. {\tb In particular, the solution is a continuous function of $\bar{u}$ and is given by $\bar{x} = k_{u}^{-1}(\bar{u}) = f^{-1}(-G\bar{u})$ (resp. $\bar{x} = F^{-1}(-G\bar{u})$).} It follows that $\mathcal{K}_{\Sigma} = k_y \circ k_u^{-1}$ is a continuous monotone mapping, and is therefore maximally monotone \cite[Cor. 20.25]{HB-PC:11}. (iv): If $f$ is the zero map {\tb (resp. the identity map)}, then $\mathcal{K}_u^{-1} = \setdef{(\bar{u},\bar{x})}{\bar{u} = \vzeros[m]}$, which is upper hemicontinuous, and the result follows from (ii).
\end{proof}

\medskip

\begin{lemma}{\bf (Bregman Divergence Properties):}\label{Lem:Bregman}
Let $\map{V}{\real^n}{\real}$ be differentiable and for $z \in \real^n$ let $V_z(x) := V(x) - V(z) - \nabla V(z)^{\sf T}(x-z)$. If $V$ is (strictly, $\mu$-strongly) convex, then
\begin{enumerate}
\item[(i)] $V_z(x) \geq 0$ (resp. $V_z(x) > 0$, $V_{z}(x) \geq \frac{\mu}{2}\|x-z\|_2^2$) for all $x \neq z$;
\item[(ii)] $x \longmapsto V_z(x)$ is (strictly, strongly) convex;
\end{enumerate}

\end{lemma}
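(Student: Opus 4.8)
The plan is to prove both parts by reducing everything to the paper's differential characterization of convexity, namely that a differentiable $V$ satisfies $[\nabla V(x)-\nabla V(z)]^{\sf T}(x-z) \geq k(x,z)\|x-z\|_2^2$ with $k \geq 0$ in the convex case, $k(x,z)>0$ for $x\neq z$ in the strictly convex case, and $k \geq \mu > 0$ in the $\mu$-strongly convex case. Part (ii) is essentially immediate, so I would dispatch it first and spend the effort on part (i).

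For part (ii), the observation is that $V_z$ differs from $V$ only by the constant $-V(z)$ and the affine term $-\nabla V(z)^{\sf T}(x-z)$, neither of which affects second-order behavior. Concretely, $\nabla_x V_z(x) = \nabla V(x) - \nabla V(z)$, so the $z$-dependent term cancels in any difference of gradients, giving
$$
[\nabla_x V_z(x_1) - \nabla_x V_z(x_2)]^{\sf T}(x_1-x_2) = [\nabla V(x_1)-\nabla V(x_2)]^{\sf T}(x_1-x_2) \geq k(x_1,x_2)\|x_1-x_2\|_2^2.
$$
Thus $V_z$ satisfies the convexity inequality with the \emph{same} function $k$ as $V$, and therefore inherits convexity, strict convexity, or $\mu$-strong convexity verbatim.

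For part (i), the key step is to represent the Bregman divergence as an integral of the gradient along the segment from $z$ to $x$. Introducing $x_t := z + t(x-z)$ for $t \in [0,1]$ and applying the fundamental theorem of calculus to $t \mapsto V(x_t)$, I would obtain
$$
V_z(x) = \int_0^1 [\nabla V(x_t) - \nabla V(z)]^{\sf T}(x-z)\,\mathrm{d}t.
$$
Since $x_t - z = t(x-z)$, applying the convexity inequality to the pair $(x_t,z)$ and dividing by $t>0$ yields the pointwise bound $[\nabla V(x_t)-\nabla V(z)]^{\sf T}(x-z) \geq k(x_t,z)\,t\,\|x-z\|_2^2$ for each $t \in (0,1]$. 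Substituting this into the integral gives all three claims at once: in the convex case the integrand is nonnegative, so $V_z(x) \geq 0$; in the strictly convex case $x_t \neq z$ for $t \in (0,1]$ forces the integrand to be strictly positive there, so $V_z(x) > 0$; and in the $\mu$-strongly convex case $k(x_t,z) \geq \mu$ gives $V_z(x) \geq \mu\|x-z\|_2^2 \int_0^1 t\,\mathrm{d}t = \frac{\mu}{2}\|x-z\|_2^2$.

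I do not anticipate a genuine obstacle here; the only point requiring care is the factor of $t$ produced by the substitution $x_t - z = t(x-z)$, which is precisely what converts the strong-convexity constant $\mu$ into the $\frac{\mu}{2}$ appearing in the statement. A secondary technical point is that the fundamental-theorem step uses differentiability of $V$ along the segment (automatic if $\nabla V$ is continuous, as in the paper's applications); were one to insist on the bare differentiability hypothesis, part (i) could instead be obtained directly from a first-order convexity argument, but the integral route is the cleanest given the assumed smoothness.
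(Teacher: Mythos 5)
Your proof is correct, but it takes a different route from the paper's for part (i). The paper's proof simply invokes the first-order characterization of convexity: $V_z(x)$ is the gap between $V(x)$ and its linear approximation at $z$, which is nonnegative (resp.\ positive, resp.\ at least $\frac{\mu}{2}\|x-z\|_2^2$) ``immediately from convexity'' --- that is, it treats the tangent-line inequality $V(x) \geq V(z) + \nabla V(z)^{\sf T}(x-z)$ and its strict/strong variants as known equivalents of convexity, and for part (ii) it checks this same first-order condition directly for $V_z$. You instead work entirely from the paper's \emph{stated} definition of convexity (the monotone-gradient inequality with modulus $k(x,z)$), and derive the tangent-line bound from it via the line-integral identity $V_z(x) = \int_0^1 [\nabla V(x_t)-\nabla V(z)]^{\sf T}(x-z)\,\mathrm{d}t$ with $x_t = z + t(x-z)$. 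This is more self-contained: the equivalence the paper quietly relies on is exactly what your integral computation proves, and the factor $\int_0^1 t\,\mathrm{d}t = \frac12$ transparently explains where the $\frac{\mu}{2}$ comes from; your part (ii), noting that the affine shift cancels in gradient differences, is essentially the paper's argument in gradient-monotone rather than first-order form. The price is the integrability of $\nabla V(x_t)$ along the segment, which you correctly flag; this is harmless here since a differentiable convex function automatically has a continuous gradient, so the fundamental-theorem step is justified under the lemma's hypotheses. One small point worth making explicit in the strictly convex case: to pass from ``integrand positive on $(0,1]$'' to ``integral positive'' you should invoke either continuity of the integrand or the fact that a nonnegative integrable function with zero integral vanishes almost everywhere; either closes the step.
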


\begin{proof}.
Clearly $V_z(z) = 0$. That $V_z(x) \geq 0$ for $x \neq z$ follows immediately from convexity, since $V_z(x) = V(x) - [V(z) + \nabla V(z)^{\sf T}(x-z)]$ is the difference between $V(x)$ and its linear approximation at $z$, with strict inequality if $V$ is strictly convex. Strong convexity of $V(x)$ is equivalent to 
$$
V(x) - V(z) \geq \nabla V(z)^{\sf T}(x-z) + \frac{\mu}{2}\|x-z\|_2^2
$$
which immediately shows that $V_z(x) \geq \frac{\mu}{2}\|x-z\|_2^2$.  Convexity of $x \mapsto V_z(x)$ follows by directly checking that $V_z(x) - V_z(x^\prime) - \nabla V_{z}(x^\prime)^{\sf T}(x-x^\prime) \geq 0$ for all $x, x^\prime \in \real^n$, with strict inequality when $V$ is strictly convex, and with zero replaced by $\frac{\mu}{2}\|x-x^\prime\|_2^2$ when $V$ is $\mu$-strongly convex.
\end{proof}

%
%
%

\smallskip

\medskip

{


\begin{lemma}{\bf (Intersecting Monotone Relations):}\label{Lem:Intersection}
Let $\mathcal{K}_{\Sigma_1} \subseteq \real^{m} \times \real^{m}$ and $\mathcal{K}_{\Sigma_2} \subseteq \real^{m} \times \real^{m}$ be two maximally monotone relations, each satisfying the dissipation inequality \eqref{Eq:IOMapping} with parameters $(Q_1,\frac{1}{2}I_m,R_1)$ and $(Q_2,\frac{1}{2}I_m,R_2)$, respectively. For any $v_1, v_2 \in \real^m$, the pair of simultaneous inclusions
\begin{equation}\label{Eq:PairOfConclusions}
y_1 \in \mathcal{K}_{\Sigma_1}(v_1-y_2)\,, \qquad y_2 \in \mathcal{K}_{\Sigma_2}(v_2+y_1)\,,
\end{equation}
possess a unique solution if 
$$
R_{2} + Q_{1} \prec \vzeros[] \quad \text{or} \quad R_{1} + Q_{2} \prec \vzeros[]\,.
$$
\end{lemma}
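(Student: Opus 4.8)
The plan is to recast the pair of inclusions \eqref{Eq:PairOfConclusions} as a single monotone generalized equation and then dispatch existence and uniqueness separately. Stacking outputs as $y = (y_1,y_2)$ and inputs as $v = (v_1,v_2)$, the negative-feedback coupling $u_1 = v_1 - y_2$, $u_2 = v_2 + y_1$ is encoded by the skew-symmetric matrix
$$
M = \begin{bmatrix} \vzeros[] & -I_m \\ I_m & \vzeros[] \end{bmatrix}, \qquad M^{\sf T} = -M,
$$
so that \eqref{Eq:PairOfConclusions} reads $y \in \mathcal{A}(v + My)$, where $\mathcal{A} := \mathcal{K}_{\Sigma_1} \times \mathcal{K}_{\Sigma_2}$. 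As a product of maximally monotone relations, $\mathcal{A}$ is maximally monotone, and since $-M$ is single-valued, everywhere-defined, and (being skew) monotone, the substitution $u = v + My$ shows the problem is equivalent to finding $u$ solving $Mv \in (\mathcal{A} + M)(u)$, after which $y = -M(u-v)$ is recovered.

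For existence, I would invoke Rockafellar's sum theorem: since $M$ has full domain, $\mathcal{A} + M$ is maximally monotone. I would then use the strict hypothesis $R_2 + Q_1 \prec \vzeros[]$ (or its counterpart) together with the dissipation inequalities \eqref{Eq:IOMapping} to establish the coercivity needed to apply the Browder--Minty surjectivity theorem for maximally monotone operators, yielding at least one $u$, and hence at least one pair $(y_1,y_2)$.

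For uniqueness, I would take two solutions sharing the same $(v_1,v_2)$, set $\Delta_1 = y_1 - y_1'$ and $\Delta_2 = y_2 - y_2'$, and apply \eqref{Eq:IOMapping} to each relation with input differences $-\Delta_2$ (for $\Sigma_1$) and $\Delta_1$ (for $\Sigma_2$). The cross terms $\mp\Delta_1^{\sf T}\Delta_2$ cancel --- the algebraic signature of the skew feedback --- and adding the two inequalities leaves
$$
\Delta_1^{\sf T}(Q_1 + R_2)\Delta_1 + \Delta_2^{\sf T}(R_1 + Q_2)\Delta_2 \geq 0.
$$
The condition $R_2 + Q_1 \prec \vzeros[]$ renders the first quadratic form negative definite; combining this with the identity $\Delta_1^{\sf T}\Delta_2 = 0$ (which follows from the monotonicity of each relation taken individually) and with the single-valuedness supplied by the cocoercive/strongly monotone structure recorded in Lemma \ref{Lem:MaxMonotone}, I would force $\Delta_1 = \vzeros[m]$ and then propagate the equality through the remaining inclusion to obtain $\Delta_2 = \vzeros[m]$.

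The main obstacle I anticipate is precisely that the skew-symmetric interconnection contributes \emph{nothing} to the monotonicity budget, so all of the coercivity (for existence) and all of the strict decrease (for uniqueness) must be extracted from the indefinite quadratic data $(Q_i,R_i)$ and the single strict definiteness condition acting in only one of the two channels. Establishing coercivity of $\mathcal{A}+M$ from such one-sided data, and cleanly closing the uniqueness argument when the relations are genuinely set-valued rather than single-valued functions --- where $\Delta_1^{\sf T}(Q_1+R_2)\Delta_1+\Delta_2^{\sf T}(R_1+Q_2)\Delta_2\ge 0$ does not by itself annihilate both differences --- is the delicate step, and it is here that the maximal-monotonicity consequences of Lemma \ref{Lem:MaxMonotone} must be used to carry the definiteness from one component to the other.
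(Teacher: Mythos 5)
Your reduction of \eqref{Eq:PairOfConclusions} to the single inclusion $Mv \in (\mathcal{A}+M)(u)$ with $\mathcal{A} := \mathcal{K}_{\Sigma_1}\times\mathcal{K}_{\Sigma_2}$ and $u = v+My$ is correct, and it puts solutions of the stacked inclusion in bijection with solutions of the pair; moreover, since $M$ has full domain, your sum-theorem step giving maximal monotonicity of $\mathcal{A}+M$ is automatic (in this one respect your formulation is cleaner than the paper's, whose sum $F$ involves two possibly domain-restricted relations). The genuine gap is the existence step, and it cannot be repaired along the lines you propose. Because $M$ is skew, $[(a+Mu)-(a'+Mu')]^{\sf T}(u-u') = (a-a')^{\sf T}(u-u')$, so the monotonicity surplus of $\mathcal{A}+M$ is exactly that of $\mathcal{A}$; the hypothesis $R_2+Q_1\prec\vzeros[]$ ties the \emph{input}-side parameter of $\Sigma_2$ to the \emph{output}-side parameter of $\Sigma_1$, a coupling that is invisible on the graph of the stacked operator, where $\Delta a$ and $\Delta u$ are unrelated. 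Concretely, let $\mathcal{K}_{\Sigma_1}$ be the zero map (it satisfies \eqref{Eq:IOMapping} with $Q_1=-I_m$, $R_1=\vzeros[]$, since all of its output differences vanish) and $\mathcal{K}_{\Sigma_2}$ the identity map ($Q_2=R_2=\vzeros[]$); both are continuous monotone functions, hence maximally monotone, and $R_2+Q_1=-I_m\prec\vzeros[]$. Then $(\mathcal{A}+M)(u)=(-u_2,\,u_1+u_2)$ and
$$
\left[(\mathcal{A}+M)(u)-(\mathcal{A}+M)(u')\right]^{\sf T}(u-u') = \|u_2-u_2'\|_2^2\,,
$$
which degenerates along the entire $u_1$-direction: $\mathcal{A}+M$ is neither strongly monotone nor coercive, even though the conclusion of the lemma holds here (the map is invertible and affine). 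So no coercivity estimate for $\mathcal{A}+M$ follows from the hypotheses, and the Browder--Minty argument you plan to invoke has nothing to act on; the obstacle you flag as ``delicate'' is in fact fatal to this route.

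The missing idea --- and the paper's actual argument --- is \emph{partial inversion} rather than stacking. Eliminating $y_2$ gives $v_1\in F(y_1):=\mathcal{K}_{\Sigma_2}(y_1+v_2)+\mathcal{K}_{\Sigma_1}^{-1}(y_1)$, which is \eqref{Eq:FIncl}. Inverting a relation swaps its two quadratic parameters: $\mathcal{K}_{\Sigma_1}^{-1}$ satisfies \eqref{Eq:IOMapping} with $(R_1,\frac{1}{2}I_m,Q_1)$. Both summands of $F$ therefore carry their ``$R$-type'' parameter on the \emph{shared input} $y_1$, and adding the two dissipation inequalities (dropping the terms in $Q_2$ and $R_1$, which the paper argues are negative semidefinite) yields $(w-w')^{\sf T}(y_1-y_1')\geq -(y_1-y_1')^{\sf T}(R_2+Q_1)(y_1-y_1')$ for all $w\in F(y_1)$, $w'\in F(y_1')$, i.e., $F$ is $\mu$-strongly monotone with $\mu=-\lambda_{\rm max}(R_2+Q_1)>0$. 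Maximal monotonicity plus strong monotonicity then delivers existence \emph{and} uniqueness of $y_1$ in one stroke \cite[Example 22.9]{HB-PC:11}; strong monotonicity \emph{is} the coercivity, and it is created precisely by the inversion, which aligns $Q_1$ with $R_2$ --- a Schur-complement-like step that the skew-stacked coordinates cannot reproduce. Your uniqueness algebra (cross-term cancellation and $\Delta_1^{\sf T}\Delta_2=0$) is sound, but note two further points: concluding $\Delta_1=\vzeros[m]$ from $\Delta_1^{\sf T}(Q_1+R_2)\Delta_1+\Delta_2^{\sf T}(R_1+Q_2)\Delta_2\geq 0$ requires the sign information $R_1+Q_2\preceq\vzeros[]$, and your appeal to ``single-valuedness supplied by the cocoercive/strongly monotone structure'' of $\mathcal{K}_{\Sigma_2}$ is not licensed by the hypotheses of this lemma, which assume neither cocoercivity nor single-valuedness --- the closure of the uniqueness argument must also come from the strong monotonicity produced by the elimination, not from structure the relations are not assumed to possess.
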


\begin{proof}.
Let $v_1, v_2 \in \real^m$ be arbitrary. Through simple elimination, the pair of inclusions \eqref{Eq:PairOfConclusions} is equivalent to either of the two inclusions
\begin{subequations}
\begin{align}\label{Eq:FIncl}
v_1 &\in F(y_1) := \mathcal{K}_{\Sigma_2}(y_1+v_2) + \mathcal{K}_{\Sigma_1}^{-1}(y_1)\\
\label{Eq:GIncl}
v_2 &\in G(y_2) := \mathcal{K}_{\Sigma_2}^{-1}(y_2) - \mathcal{K}_{\Sigma_1}(-y_2+v_1)
\end{align}
\end{subequations}
where $\mathcal{K}_{\Sigma_1}^{-1} = \setdef{(v,u)}{(u,v)\in\mathcal{K}_{\Sigma_1}}$ is the inverse relation of $\mathcal{K}_{\Sigma_1}$, and similarly for $\mathcal{K}_{\Sigma_2}$. Note that since $\mathcal{K}_{\Sigma_1}$ and $\mathcal{K}_{\Sigma_2}$ are maximally monotone, we have that $Q_{1}, R_{1}, Q_{2}, R_{2} \preceq \vzeros[]$. Consider first the inclusion for $F$. Since $\mathcal{K}_{\Sigma_1}$ and $\mathcal{K}_{\Sigma_2}$ are maximally monotone, it follows that so is $F$ \cite[Prop 20.22]{HB-PC:11}, which satisfies the dissipation inequality \eqref{Eq:IOMapping} with parameters $(R_{2}+Q_{1}, \frac{1}{2}I_m, \vzeros[])$. By (i) then, $F$ is $\mu$-strongly monotone with {\tb $\mu = -\lambda_{\rm max}(R_{2}+Q_{1}) > 0$}, and the inclusion \eqref{Eq:FIncl} possesses a unique solution \cite[Example 22.9]{HB-PC:11}. The second condition follows by applying analogous arguments to the relation $G$.
\end{proof}
}

\begin{pfof}{Lemma \ref{Lem:DTHillMoylanIncremental}} \emph{Sufficiency:} For $(x,\bar{x}) \in \mathcal{X} \times \mathcal{E}_{\Sigma}$, we compute
$$
\begin{aligned}
\Delta V_{\bar{x}} := V_{\bar{x}}&(f(x)+Gu) - V_{\bar{x}}(x)\\ 
&= \|f(x)+Gu-\bar{x}\|_P^2 - \|x-\bar{x}\|_P^2\\
&= \|f(x)-f(\bar{x})+G(u-\bar{u})\|_P^2 - \|x-\bar{x}\|_P^2\\
&= \|f(x)-f(\bar{x})\|_P^2 - \|x-\bar{x}\|_P^2\\
&\quad + 2[f(x)-f(\bar{x})]^{\sf T}PG(u-\bar{u})\\
&\quad + (u-\bar{u})^{\sf T}G^{\sf T}PG(u-\bar{u})\,.
\end{aligned}
$$
Substituting \eqref{Eq:DTHillMoylanEID1} and \eqref{Eq:DTHillMoylanEID3}, we find that
$$
\begin{aligned}
\Delta V_{\bar{x}} &= [h(x)-h(\bar{x})]^{\sf T}Q[h(x)-h(\bar{x})]\\
&\quad - {\tb \|\ell(x,\bar{x})\|_2^2} + 2[f(x)-f(\bar{x})]^{\sf T}PG(u-\bar{u})\\
&\quad + (u-\bar{u})^{\sf T}\widehat{R}(u-\bar{u}) - (u-\bar{u})^{\sf T}W^{\sf T}W(u-\bar{u})\,.
\end{aligned}
$$
Substituting \eqref{Eq:DTHillMoylanEID2}, we further obtain
$$
\begin{aligned}
\Delta V_{\bar{x}} &= [h(x)-h(\bar{x})]^{\sf T}Q[h(x)-h(\bar{x})]\\
&\quad {\tb - \|\ell(x,\bar{x}\|_2^2 -2\ell(x,\bar{x})^{\sf T}W(u-\bar{u})}\\
&\quad +2[h(x)-h(\bar{x})]^{\sf T}(QJ+S)(u-\bar{u})\\
&\quad + (u-\bar{u})^{\sf T}\widehat{R}(u-\bar{u}) - (u-\bar{u})^{\sf T}W^{\sf T}W(u-\bar{u})
\end{aligned}
$$
Adding the nonnegative quantity ${\tb \|\ell(x,\bar{x})+W(u-\bar{u})\|_2^2}$ to the right-hand side of the dissipation equality, after canceling terms we obtain the bound
$$
\begin{aligned}
\Delta V_{\bar{x}} &\leq [h(x)-h(\bar{x})]^{\sf T}Q[h(x)-h(\bar{x})]\\
&\quad + (u-\bar{u})^{\sf T}\widehat{R}(u-\bar{u}) \\ &\quad +2[h(x)-h(\bar{x})]^{\sf T}(QJ+S)(u-\bar{u})
\end{aligned}
$$
Substituting $h(x) = y - Ju$ and collecting terms yields the desired dissipation inequality $\Delta V_{\bar{x}} \leq {\sf w}(u-\bar{u},y-\bar{y})$.

\smallskip

\emph{Necessity:} Suppose that $\Sigma$ is EID with the supply rate \eqref{Eq:SupplyNormal}, i.e., for each $\bar{x} \in \mathcal{E}_{\Sigma}$ it holds that $V_{\bar{x}}(f(x)+Gu) - V_{\bar{x}}(x) \leq {\sf w}(u-\bar{u},y-\bar{y})$. Define the dissipation function
$$
{\sf d}_{\bar{x}}(x,u) := -[V_{\bar{x}}(f(x)+Gu)-V_{\bar{x}}(x)] + {\sf w}(u-\bar{u},y-\bar{y})
$$
which by construction is nonnegative. Using the definition of $V_{\bar{x}}(x)$ and $\bar{x} = f(\bar{x}) + G\bar{u}$, substituting for $y$ and $\bar{y}$, and collecting terms, one finds that
\begin{equation}\label{Eq:DTDissipation}
\begin{aligned}
{\sf d}_{\bar{x}}(x,u) &= \|x-\bar{x}\|_P^2 - \|f(x)-f(\bar{x})+G(u-\bar{u})\|_P^2\\
&+ [h(x)-h(\bar{x})]^{\sf T}Q[h(x)-h(\bar{x})]\\
&+ (u-\bar{u})^{\sf T}\widehat{R}(u-\bar{u})\\
&+ 2[h(x)-h(\bar{x})]^{\sf T}(S+QJ)(u-\bar{u})\\
\end{aligned}
\end{equation}
where $\widehat{R} = R + J^{\sf T}S + S^{\sf T}J + J^{\sf T}QJ$.
{\tb This expression is quadratic in $(u-\bar{u})$, and may be written as
\begin{equation}\label{Eq:DissipationFunctionDT}
\begin{aligned}
{\sf d}_{\bar{x}}(x,u) 
&= \begin{bmatrix}
1 \\ u-\bar{u}
\end{bmatrix}^{\sf T}\underbrace{\begin{bmatrix}
a(x,\bar{x}) & b(x)^{\sf T}-b(\bar{x})^{\sf T}\\
b(x)-b(\bar{x}) & \widehat{R}-G^{\sf T}PG
\end{bmatrix}}_{:= \mathcal{D}(x,\bar{x})}\begin{bmatrix}
1 \\ u-\bar{u}
\end{bmatrix}
\end{aligned}
\end{equation}
where
$$
\begin{aligned}
a(x,\bar{x}) &= \|x-\bar{x}\|_P^2 - \|f(x)-f(\bar{x})\|_P^2\\
&\quad + [h(x)-h(\bar{x})]^{\sf T}Q[h(x)-h(\bar{x})]\\
b(x) &= -f(x)^{\sf T}PG + h(x)^{\sf T}(S+QJ)
\end{aligned}
$$
Arguments similar to those made in the proof of Lemma \ref{Lem:HillMoylanIncremental} show that $\mathcal{D}(x,\bar{x})$ can be factored as
\begin{equation}\label{Eq:MathcalDFactor2}
\mathcal{D}(x,\bar{x}) = \begin{bmatrix}
\ell(x,\bar{x})^{\sf T} \\ W^{\sf T}
\end{bmatrix}\begin{bmatrix}
\ell(x,\bar{x}) & W
\end{bmatrix}
\end{equation}
for an appropriate matrix $W \in \real^{k\times m}$ and function $\map{\ell}{\mathcal{X}\times\mathcal{X}}{\real^k}$. 
Equating the two expressions for $\mathcal{D}(x,\bar{x})$ immediately yields \eqref{Eq:DTHillMoylanEID1}--\eqref{Eq:DTHillMoylanEID3}. The remaining statement follows by arguments identical to those used in the proof of Lemma \ref{Lem:HillMoylanIncremental}.}
%
%
%
%
\end{pfof}

\medskip

\begin{lemma}{\bf (IFP/OSP to Finite $\boldsymbol{\mathscr{L}_2}$-Gain):}\label{Eq:LemBizzaroL2}
If the system $\Sigma$ in \eqref{Eq:NonlinearSystem} is dissipative with respect to the supply rate
$$
{\sf w}(u,y) = -a y^{\sf T}y + y^{\sf T}u + b u^{\sf T}u 
$$
where $a > 0$ and $b \geq 0$, then it is dissipative with respect to the supply rate
$$
{\sf \tilde{w}}(u,y) = - y^{\sf T}y + \gamma^2 u^{\sf T}u 
$$
with
\begin{equation}\label{Eq:gamma-isp-osp}
\gamma^2 = \frac{1}{a^2}\frac{ab+\frac{1+\sqrt{4ab+1}}{4}}{1-\frac{1}{1+\sqrt{4ab+1}}}
\end{equation}
\begin{proof}.
Let $\delta > 1/(2a)$, then
$$
\begin{aligned}
{\sf w}(u,y) &= -a y^{\sf T}y - \underbrace{\frac{1}{2\delta}(y-\delta u)^{\sf T}(y-\delta u)}_{\geq 0}\\
&\quad + b u^{\sf T}u + \frac{\delta}{2}u^{\sf T}u + \frac{1}{2\delta}y^{\sf T}y\\
&\leq -\left(a-\frac{1}{2\delta}\right)y^{\sf T}y + \left(b + \frac{\delta}{2}\right)u^{\sf T}u
\end{aligned}
$$
After rescaling by $a - \frac{1}{2\delta} > 0$, this is equivalent to dissipativity with respect to the supply rate
$$
{\sf \bar{w}}(u,y) = -y^{\sf T}y + \Gamma(\delta) u^{\sf T}u
$$
with
$$
\Gamma(\delta) = \frac{b + \frac{\delta}{2}}{a-\frac{1}{2\delta}}\,.
$$
The function $\Gamma(\delta)$ is {\tb strictly convex on its domain $(\frac{1}{2a},\infty)$}, and achieves its {\tb global} minimum of $\gamma^2$ at $\delta^\star = (\sqrt{4ab+1}+1)/(2a)$, where $\gamma$ is as in \eqref{Eq:gamma-isp-osp}.\end{proof}
\end{lemma}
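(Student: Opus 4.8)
The plan is to argue directly at the level of the dissipation inequality. By hypothesis there is a storage function $V$ satisfying $\dot{V} \leq {\sf w}(u,y) = -a\|y\|_2^2 + y^{\sf T}u + b\|u\|_2^2$, and the goal is to produce a (possibly rescaled) storage function certifying dissipativity with respect to ${\sf \tilde{w}}(u,y) = -\|y\|_2^2 + \gamma^2\|u\|_2^2$. Since both supply rates are quadratic, it suffices to dominate ${\sf w}$ pointwise by a positive multiple of ${\sf \tilde{w}}$: any positive scaling factor can be absorbed into $V$ without affecting its nonnegativity or its value at the equilibrium, so a rescaled $V$ will certify the new supply rate.

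The key step is to neutralize the indefinite cross term $y^{\sf T}u$ by completing the square with a free parameter. For any $\delta > 0$ one has the identity $y^{\sf T}u = -\frac{1}{2\delta}\|y-\delta u\|_2^2 + \frac{1}{2\delta}\|y\|_2^2 + \frac{\delta}{2}\|u\|_2^2$. Substituting this into ${\sf w}$ and discarding the manifestly nonpositive term $-\frac{1}{2\delta}\|y-\delta u\|_2^2$ yields the upper bound ${\sf w}(u,y) \leq -\bigl(a - \frac{1}{2\delta}\bigr)\|y\|_2^2 + \bigl(b + \frac{\delta}{2}\bigr)\|u\|_2^2$. Restricting to $\delta > 1/(2a)$ keeps the coefficient of $\|y\|_2^2$ strictly positive, so dividing through by $a - \frac{1}{2\delta} > 0$ (equivalently, rescaling $V$ by this constant) certifies dissipativity with respect to $-\|y\|_2^2 + \Gamma(\delta)\|u\|_2^2$, where $\Gamma(\delta) = (b + \delta/2)/(a - 1/(2\delta))$.

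It remains to choose $\delta$ so as to make the gain bound as tight as possible. I would minimize $\Gamma$ over the admissible interval $(1/(2a),\infty)$: writing $\Gamma(\delta) = (\delta^2 + 2b\delta)/(2a\delta - 1)$, the first-order condition reduces to a quadratic in $\delta$ whose relevant root is $\delta^\star = (1 + \sqrt{4ab+1})/(2a)$, and substituting back gives the claimed value $\gamma^2$. To confirm this critical point is the global minimizer, I would observe that $\Gamma(\delta) \to +\infty$ as $\delta \to (1/(2a))^+$ and as $\delta \to \infty$, so the unique interior critical point is a global minimum (alternatively, one checks that $\Gamma$ is strictly convex on its domain). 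This pins down $\gamma^2$ as the smallest achievable gain from this family of bounds.

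I do not anticipate a genuine obstacle here — the result is essentially the standard passage from input-feedforward-plus-output-strict passivity to a finite $\mathscr{L}_2$-gain. The only nonroutine moves are recognizing the single-parameter completion of the square and imposing the admissibility constraint $\delta > 1/(2a)$ that makes the rescaling legitimate; the remaining substitution and the one-variable minimization producing $\delta^\star$ and $\gamma^2$ are elementary calculus and algebra.
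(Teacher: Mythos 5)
Your proposal is correct and follows essentially the same route as the paper's proof: the same completion of the square in the cross term $y^{\sf T}u$ with free parameter $\delta$, the same restriction $\delta > 1/(2a)$ and rescaling of the storage function, and the same minimization of $\Gamma(\delta)$ yielding $\delta^\star = (1+\sqrt{4ab+1})/(2a)$ and the stated $\gamma^2$. The only cosmetic difference is that you certify global minimality via the boundary behavior of $\Gamma$ (blow-up at both ends of $(\tfrac{1}{2a},\infty)$), whereas the paper invokes strict convexity --- both are valid, and you note the convexity argument as an alternative anyway.
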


{\tb
\begin{lemma}\label{Lem:Orthogonal}
Let $\widehat{R} \in \real^{m \times m}$ be positive semidefinite, and let $\map{W}{\real^N}{\real^{k \times m}}$. Then $W(z)^{\sf T}W(z) = \widehat{R}$ for all $z \in \real^N$ if and only if there exists an orthogonal matrix $\mathcal{O}(z) \in \real^{k \times k}$ and a constant matrix $W^\prime \in \real^{k \times m}$ such that $(W^\prime)^{\sf T}W^\prime = \widehat{R}$ and $W(z) = \mathcal{O}(z)W^\prime$ for all $z \in \real^N$. 
\end{lemma}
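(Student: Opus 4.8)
The plan is to reduce the statement to a single pointwise matrix-factorization fact and then apply it separately for each $z$, since no regularity of $\mathcal{O}(\cdot)$ in $z$ is required. The reverse implication is immediate: if $W(z) = \mathcal{O}(z)W'$ with $\mathcal{O}(z)$ orthogonal and $(W')^{\sf T}W' = \widehat{R}$, then $W(z)^{\sf T}W(z) = (W')^{\sf T}\mathcal{O}(z)^{\sf T}\mathcal{O}(z)W' = (W')^{\sf T}W' = \widehat{R}$. The substance lies entirely in the forward direction.

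The core of the forward implication is the pointwise claim: if $A,B \in \real^{k \times m}$ satisfy $A^{\sf T}A = B^{\sf T}B$, then $A = \mathcal{O}B$ for some orthogonal $\mathcal{O} \in \real^{k \times k}$. To prove it, I would first observe that $A^{\sf T}A = B^{\sf T}B$ is equivalent to $\langle Av, Aw\rangle = \langle Bv, Bw\rangle$ for all $v,w \in \real^m$; taking $v = w$ gives $\|Av\|_2 = \|Bv\|_2$. This lets me define a linear map $T : \mathrm{col}(B) \to \mathrm{col}(A)$ by $T(Bv) := Av$. The first thing to check is that $T$ is well defined: if $Bv = Bv'$, then $\|A(v-v')\|_2 = \|B(v-v')\|_2 = 0$, so $Av = Av'$. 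The same norm identity shows that $T$ is a linear isometry of $\mathrm{col}(B)$ onto $\mathrm{col}(A)$.

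The key quantitative step is a rank count: $\mathrm{rank}(A) = \mathrm{rank}(A^{\sf T}A) = \mathrm{rank}(\widehat{R}) = \mathrm{rank}(B^{\sf T}B) = \mathrm{rank}(B)$, so $\dim\mathrm{col}(A) = \dim\mathrm{col}(B) =: r$, and consequently the orthogonal complements $\mathrm{col}(A)^{\perp}$ and $\mathrm{col}(B)^{\perp}$ both have dimension $k-r$. I would then choose any linear isometry $T'$ from $\mathrm{col}(B)^{\perp}$ onto $\mathrm{col}(A)^{\perp}$ and let $\mathcal{O}$ act as $T$ on $\mathrm{col}(B)$ and as $T'$ on $\mathrm{col}(B)^{\perp}$. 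Since $\mathcal{O}$ maps the orthogonal decomposition $\real^k = \mathrm{col}(B) \oplus \mathrm{col}(B)^{\perp}$ isometrically onto $\real^k = \mathrm{col}(A) \oplus \mathrm{col}(A)^{\perp}$, it is orthogonal, and $\mathcal{O}B = A$ holds columnwise because $\mathcal{O}(Be_j) = T(Be_j) = Ae_j$ for each standard basis vector $e_j$. An equivalent route is to take full singular value decompositions $A = U_A\Sigma V^{\sf T}$ and $B = U_B \Sigma V^{\sf T}$ with the common factor $\Sigma V^{\sf T}$ dictated by $A^{\sf T}A = B^{\sf T}B$, and set $\mathcal{O} = U_A U_B^{\sf T}$.

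Finally I would assemble the lemma. Since $W(z)^{\sf T}W(z) = \widehat{R}$ for all $z$, fixing any $z_0$ and setting $W' := W(z_0)$ yields a constant matrix with $(W')^{\sf T}W' = \widehat{R}$ and $\mathrm{rank}(\widehat{R}) \leq k$. Applying the pointwise claim with $A = W(z)$ and $B = W'$ produces, for each $z$, an orthogonal $\mathcal{O}(z)$ with $W(z) = \mathcal{O}(z)W'$, as required. The main obstacle is the pointwise claim, specifically the extension of the partial isometry $T$ between column spaces to a full orthogonal matrix on $\real^k$; this hinges entirely on the rank equality $\mathrm{rank}(A) = \mathrm{rank}(B)$ forced by the shared Gram matrix $\widehat{R}$, which guarantees that the complementary subspaces match in dimension. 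Everything else is routine.
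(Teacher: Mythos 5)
Your proof is correct in both directions, and the forward direction takes a genuinely different (and more self-contained) route than the paper. The paper proves necessity by invoking a simultaneous singular-value-decomposition result from Horn and Johnson: since $W(z)^{\sf T}W(z) = \widehat{R}$ is constant, one can write $\widehat{R} = V\Lambda V^{\sf T}$ and $W(z) = U(z)\Sigma V^{\sf T}$ with $\Sigma$, $V$ constant and only $U(z)$ varying, so that $W' := \Sigma V^{\sf T}$ is a \emph{canonical} constant factor built from the spectral decomposition of $\widehat{R}$. You instead anchor at an arbitrary point, taking $W' := W(z_0)$, and prove the key pointwise fact ($A^{\sf T}A = B^{\sf T}B$ implies $A = \mathcal{O}B$ with $\mathcal{O}$ orthogonal) from scratch: the shared Gram matrix makes $Bv \mapsto Av$ a well-defined linear isometry between column spaces, the rank identity $\mathrm{rank}(A) = \mathrm{rank}(A^{\sf T}A) = \mathrm{rank}(B^{\sf T}B) = \mathrm{rank}(B)$ forces the orthogonal complements to have equal dimension, and any isometry between those complements completes $\mathcal{O}$. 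All the delicate points are handled — well-definedness of the partial isometry, orthogonality of the glued map (the two pieces land in orthogonal subspaces, so norms add), and the columnwise verification $\mathcal{O}Be_j = Ae_j$. What each approach buys: the paper's is shorter given the citation and produces an explicit $W'$ tied to $\widehat{R}$, which is aesthetically natural; yours is elementary, citation-free, and isolates exactly what makes the statement true (rank matching plus isometry extension), with your closing SVD remark ($\mathcal{O} = U_A U_B^{\sf T}$) recovering the paper's argument as a special case.
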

\begin{proof}.
That the existence of such quantities is sufficient for $W(z)^{\sf T}W(z) = \widehat{R}$ is straightforward. To show necessity, first note (trivially) that $W(z)^{\sf T}W(z)$ and $\widehat{R}$ commute. It follows by applying \cite[2.6.P11]{RAH-CRJ:12} point-wise that there exist orthogonal matrices $U(z) \in \real^{k \times k}$ and $V \in \real^{m \times m}$ and diagonal matrices $\Sigma \in \real^{k \times m}$ and $\Lambda \in \real^{m \times m}$ such that
$\widehat{R} = V\Lambda V^{\sf T}$ and $W(z) = U(z)\Sigma V^{\sf T}$; the result follows then with $\mathcal{O}(z) = U(z)$ and $W^\prime = \Sigma V^{\sf T}$.
%
%
%
\end{proof}
}

{\tb
\begin{lemma}\label{Lem:DifferenceFunction}
Let $\map{f}{\real^n \times \real^n}{\real^m}$. The following two statements are equivalent:
\begin{enumerate}[(i)]
\item $f(x_1,x_2) + f(x_2,x_3) + f(x_3,x_1) = \vzeros[m]$ for all $x_1,x_2,x_3 \in \real^n$
\item there exists a function $\map{g}{\real^n}{\real^m}$ such that $f(x_1,x_2) = g(x_1) - g(x_2)$ for all $x_1,x_2 \in \real^n$.
\end{enumerate}
\end{lemma}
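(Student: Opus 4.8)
The plan is to prove the two implications separately. The forward direction (ii) $\Rightarrow$ (i) is an immediate telescoping computation: assuming $f(x_1,x_2) = g(x_1)-g(x_2)$, I would substitute directly into the cyclic sum, at which point the $g$-terms cancel in pairs to leave $\vzeros[m]$. This step is routine and I would dispatch it in one line.

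For the substantive direction (i) $\Rightarrow$ (ii), the task is to manufacture a ``potential'' function $g$ from the single cyclic identity, and the key is first to extract two structural consequences by specializing the identity. Setting $x_1 = x_2 = x_3 = x$ yields $3f(x,x) = \vzeros[m]$, hence $f(x,x) = \vzeros[m]$. Next, setting $x_3 = x_1$ and invoking this diagonal value gives the antisymmetry relation $f(x_1,x_2) = -f(x_2,x_1)$. These two facts are the only preparation needed.

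With antisymmetry in hand, I would fix an arbitrary reference point $x_0 \in \real^n$ (the origin is convenient) and define the candidate $g(x) := f(x,x_0)$. To verify $f(x_1,x_2) = g(x_1) - g(x_2)$, I would apply the cyclic identity to the particular triple $(x_1,x_2,x_0)$, obtaining $f(x_1,x_2) + f(x_2,x_0) + f(x_0,x_1) = \vzeros[m]$, then rewrite $f(x_0,x_1) = -f(x_1,x_0)$ and rearrange to land exactly on $f(x_1,x_2) = f(x_1,x_0) - f(x_2,x_0) = g(x_1) - g(x_2)$.

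The only obstacle here is conceptual rather than computational: one must recognize the hypothesis as a cocycle (or ``path-independence'') condition, whose primitive is recovered simply by evaluating $f$ against a fixed base point. Once the antisymmetry is observed the construction is essentially forced, and I would note that the argument is purely algebraic, requiring no continuity, differentiability, or other regularity of $f$.
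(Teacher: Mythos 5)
Your proposal is correct and follows essentially the same route as the paper's proof: both extract $f(x,x)=\vzeros[m]$ and antisymmetry by specializing the cyclic identity, then define the potential by evaluating $f$ against a fixed base point (the paper uses $\vzeros[n]$ where you allow an arbitrary $x_0$) and conclude by one more application of the identity. The observation that no regularity of $f$ is needed is a nice touch but does not change the argument.
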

\begin{proof}.
The implication (ii) $\Rightarrow$ (i) is immediate. To show that (i) $\Rightarrow$ (ii), first set $x_1 = x_2 = x_3$ to find that $f(x_1,x_1) = \vzeros[m]$. Similarly, set $x_1 = x_3$ to find that
$$
f(x_1,x_2) + f(x_2,x_1) + \underbrace{f(x_1,x_1)}_{=\vzeros[m]} = \vzeros[m]
$$
which shows that $f(x_1,x_2) = -f(x_2,x_1)$. Finally, set $g(x) = f(x,\vzeros[n])$ and set $x_3 = \vzeros[n]$ in (i) to find that
$$
f(x_1,x_2) = -f(x_2,\vzeros[n]) - f(\vzeros[n],x_1) = -g(x_2) + g(x_1)\,,
$$
which shows the result.
\end{proof}
}

\end{document}